\documentclass{article}

\usepackage[margin=2.5cm, marginparwidth=2cm]{geometry}                
\geometry{letterpaper}                   
\usepackage[parfill]{parskip}    
\usepackage{hyperref}
\hypersetup{
    colorlinks,
    citecolor=black,
    filecolor=black,
    linkcolor=black,
    urlcolor=black
}

\usepackage{amsthm, amsmath}
\usepackage{graphicx, xcolor}
\usepackage{caption, array}\setlength{\tabcolsep}{10pt}

\usepackage{amssymb}
\usepackage{dsfont}
\usepackage{epstopdf}
\usepackage[mathscr]{euscript}
\usepackage{tikz, tikz-cd}
\graphicspath{ {images/} }

\newtheorem{thm}{Theorem}[section]
\newtheorem{lemma}[thm]{Lemma}
\newtheorem*{thm*}{Theorem}
\newtheorem{prop}[thm]{Proposition}
\newtheorem{cor}[thm]{Corollary}
\newtheorem*{cor*}{Corollary}
\newtheorem{claim}{Claim}

\theoremstyle{remark}
\newtheorem{rmk}[thm]{Remark}
\newtheorem{q}[thm]{Question}
\newtheorem*{q*}{Question}

\theoremstyle{definition}
\newtheorem{defn}[thm]{Definition}
\newtheorem{notation}{Notation}

\long\def\Restate#1#2#3#4{
\medskip\par\noindent
{\bf #1 \ref{#2} #3} {\it #4}\par\medskip }

\newcommand{\bouquet}{\vee^g S^1}
\newcommand{\bxb}{(\vee^g S^1) \times (\vee^g S^1)}
\newcommand{\hf}{\hat{f}}

\title{Heegaard splittings and virtually special square complexes}
\author{Chandrika Sadanand}

\begin{document}

\maketitle

\begin{abstract}

We give a new perspective of Heegaard splittings in terms square complexes and Guirardel's notion of a \textit{core} which allows for combinatorial measurement of the obstruction to being a connect sum of Heegaard diagrams.  A Heegaard splitting is a decomposition of a closed orientable $3$-manifold into two isomorphic handle bodies that have a shared boundary surface. Usually, a number of curves on the shared boundary surface, called a Heegaard diagram, are used to describe a Heegaard splitting. We define a larger object, the \textit{augmented Heegaard diagram}, by building on methods of Stallings and Guirardel to encode the information of a Heegaard splitting. 

 \textit{Augmented Heegaard diagrams} have several desirable properties: each 2-cell is a square, they have \textit{non-positive combinatorial curvature} and they are \textit{virtually special}. Restricting to manifolds that do not have $S^1 \times S^2$ as a connect summand, augmented Heegaard diagrams are tied to the decomposition of a $3$-manifold via connect sum as described above. 
\end{abstract}

\tableofcontents

\section{Introduction}


Any closed $3$-manifold $M$ is the union of two handle bodies of genus $g$ intersecting along their common boundary surface (a Heegard splitting). The information in this decomposition is typically encoded by a set of curves on the boundary surface called a Heegaard diagram. Stallings shows that it can equally well be encoded by a homotopy class of maps between two $2$-complexes: the boundary surface  and  the product of a bouquet of $g$ circles with it itself. We call this map (up to homotopy) the \textit{Stallings map} (see Section \ref{encoding}).

The \textit{handle slide group} acts transitively on the set of Heegaard diagrams that describe a particular Heegaard splitting, and $Out(F_g) \times Out(F_g)$ modulo a small kernel acts transitively on the Stallings maps describing a particular Heegaard splitting. We find that these two structures are equivalent in the following way. 

\Restate{Theorem}{heegaarddiagram}{}{Stallings maps are in bijection with Heegaard diagrams for a given Heegaard splitting. The handle slide group is isomorphic to $Out(F_g) \times Out(F_g)$ modulo a small kernel. The bijection is equivariant with respect to these actions and this isomorphism.}

A Heegaard splitting is said to be \textit{reducible} if there is an essential simple closed curve on the splitting surface that bounds a disk in each of the handle bodies. If a Heegaard splitting is not reducible it is said to be \textit{irreducible}. A continuous map from a surface to a product of bouquets of circles must by no means be injective; it can exhibit ``folding" and ``crumpling" behaviour. In fact, a Stallings map cannot be an embedding. The first insight of this note is that a Stallings map for an irreducible Heegaard splitting can be simplified (via combinatorial moves) to be a local embedding (see Section~\ref{mainthm}):

\Restate{Theorem}{locinj}{}{An irreducible Heegaard splitting can be encoded by a Stallings map which is locally an embedding.}

More generally, for a Stallings map for any Heegaard splitting, as long as $S^2 \times S^1$ is not a connect summand of $M$, the map can be simplified so that it is locally an embedding everywhere except for a minimal disjoint union of subsurfaces. In this case, the Stallings map factors through a space, which we call a \textit{pinched surface}, resulting from collapsing these subsurfaces to points.

\Restate{Theorem}{thm:pinch}{}{Suppose $M$ does not have $S^2 \times S^1$ as a connect summand, and has a Heegaard splitting $M = \mathcal{H}_1 \cup \mathcal{H}_2$. Then there is a Stallings map for the Heegaard splitting that factors through a pinched surface $\Sigma^*$ such that the quotient map is locally an embedding.}

\medskip\par\noindent
{\bf Corollary}{ \it Local geometric structures on the target (or codomain) $2$-complex can be pulled back  to the domain surface (or pinched suface) by the locally injective Stallings map. 
\par\medskip }

For example the codomain is naturally decomposed into squares with the link of each vertex having cycles with at least four edges. This means the combinatorial curvature of the codomain  is at most zero. These properties can be pulled back to the domain surface (or pinched surface) to give a non-positively curved square complex structure. Namely the domain surface is tiled by squares with at least four squares meeting at each vertex. In fact, more is true about this square complex structure.

\medskip\par\noindent
\Restate{Proposition}{virtuallyspecial}{}{The Stallings map can be encoded by a non-positively curved square complex structure on the quotient of the domain $\Sigma^*$ that is virtually special (in this case, this means the edges can be labelled by V (vertical) or H (horizontal) such that each square has a pair of opposite edges that are V, and the other pair of edges are H, and when a finite sheeted cover is taken, the closed horizontal and vertical strips of squares satisfy some injectivity conditions, see Section~\ref{square complexes})}

This vertical-horiztontal property on a surface can alternately be understood as a half-translation structure in the case where the domain is a surface (Namely, it is possible to make a developping mapping of the squares to $\mathbb{R}^2$ to make planar charts on the complement of the vertices so that all the transition maps between charts needed to recover the surface are compositions of translations and rotations by $\pi$).

We now add to this square complex structure to form what we call the \textit{augmented Heegaard diagram}. This will allow for a characterization of irreducibility.

Recall that an essential simple closed curve on the splitting surface is said to be \textit{reducing} if it bounds an embedded disk in each of the handle bodies. These disks provide an embedded copy of $S^2 \subseteq M$. If $S^2 \times S^1$ is not a connect summand of $M$, this $S^2$ separates $M$ into two connect summands. Call a Heegaard diagram \textit{simple} if there is a set of disjoint reducing curves that decompose the Heegaard splitting $M= \mathcal{H}_1 \sqcup \mathcal{H}_2$ entirely into minimal genus Heegaard splittings of its prime connect summands. Furthermore, these reducing curves must not intersect any of the Heegaard diagram curves. Given a Heegaard diagram, it is difficult to know whether it is simple. One might hope that the Heegaard diagram with the minimal number of intersections is simple, but Osborne has given counterexamples to this \cite{hempel}. Whitehead and Volodin-Kuztnetsov-Fomenko, each conjectured that some combinatorial properties could be used to detect whether a Heegaard diagram is simple, but their conjectures have proven to be false, outside of specific cases of low genus \cite{morikawa} \cite{vkfcounterexample} \cite{ochiai} \cite{viro}.

Similarly, (assuming $S^2 \times S^1$ is not a connect summand of $M$) in Theorem~\ref{thm:pinch}, we see that the Stallings map for a Heegaard splitting sends many curves to points, all of which are reducing. However, even after simplifying, a Stallings map may not pinch a maximal multicurve of reducing curves, revealing the connect sum structure of $M$. More structure is needed.

With this in mind, we construct the \textit{augmented Heegaard diagram} $\mathscr{C}$ in Section~\ref{sec: VH}. The augmented Heegaard diagram consists of the square complex $\Sigma^*$ with some additional disks (also tiled by squares) attached. We use the virtually special square complex structure of $\Sigma^*$ and Stallings maps to construct $\mathscr{C}$, which is also virtually special. We find that by further simplifying the Stallings map for a given Heegaard splitting, the area of $\mathscr{C}$ can be minimized, and that in this case, $\Sigma^*$ pinches a set of reducing curves that maximally decomposes $M$ with respect to connect sum.

\Restate{Theorem}{thm:augheeg}{}{ If the augmented Heegaard diagram $\mathscr{C}$ is of minimal area in its $Out(F_g) \times Out(F_g)$ orbit, the associated Heegaard diagram is simple.}

The statement of Theorem~\ref{thm:augheeg} can be seen as a generalization and adjustment of conjectures of Whitehead and Volodin-Kuznetsov-Fomenko, which were found to be false \cite{morikawa} \cite{vkfcounterexample} \cite{ochiai} \cite{viro}. 

%

In order to constuct $\mathscr{C}$ and prove Theorem~\ref{thm:augheeg}, we consider the virtually special square complex $\Sigma^*$ as a pair of actions of the fundamental group of the splitting surface $\Sigma$ on trees (see Section~\ref{guirardel core}). Each action encodes the attachment of a handle body to the surface, and so the information of the Heegaard splitting and $3$-manifold is contained in the comparison of these actions.

In 2005 Vincent Guirardel introduced a kind of a ``convex" core, comparing two actions of a group on $\mathbb{R}$-trees \cite{guirardel}. The size of the core measures the obstruction to the two actions being equivariant projections of a single action on a tree. This can be interpreted as the core measures how close a Heegaard splitting is to a connect sum of several copies of $S^2 \times S^1$. When two actions of $\pi_1(\Sigma)$ on a tree are exactly the same, they encode the attachment of two handle bodies to $\Sigma$ in the same way. This constructs a connect sum of several copies of $S^2 \times S^1$. In this case, the Guirardel core is not even full-dimensional.

More generally the Guirardel core has been used to study the orbit of cofinite actions on an $\mathbb{R}$-tree by subgroups of $Out(F_g)$ \cite{BBC}, \cite{CBP}, \cite{CP}, \cite{CP2}, \cite{horbez}. This makes it particularly well-suited for the problem in this paper, since the Stallings maps for a given Heegaard splitting form an $Out(F_g) \times Out(F_g)$ orbit.

We find that the Guirardel core is indeed intimately related to the Stallings map of a Heegaard splitting. Let $f: \Sigma \rightarrow G_1 \times G_2$ be a Stallings map encoding a Heegaard splitting (where $\Sigma$ is the splitting surface and $G_1$ and $G_2$ are bouquets of circles). Let $S$ be the cover of $\Sigma$ corresponding to $\ker f_*$. Let $T_1 \times T_2$ be the universal cover of $G_1 \times G_2$ and let $\hat{f}: S \rightarrow T_1 \times T_2$ be the lift of $f$. The fundamental group of $\Sigma$ acts on $S$ and on $T_1 \times T_2$ (it acts on the latter in the form of $f_* \pi_1(\Sigma)$). It is these actions of $\pi_1(\Sigma)$ on $T_1$ and $T_2$ that encode the Heegaard splitting.

\[
\begin{tikzcd}
S \arrow[r, "\hat{f}"] \arrow[d] & T_1 \times T_2  \arrow[d]\\
\Sigma \arrow[r, "f"] & G_1 \times G_2
\end{tikzcd}
\]

\Restate{Theorem}{thm:coreandhatf}{}{Let $p_1$ and $p_2$ be the coordinate projections of $T_1 \times T_2$. The Guirardel core of the actions of $\pi_1(\Sigma)$ on $T_1$ and $T_2$ is the smallest set $X$ such that $\hat f(S) \subseteq X$ and ${p_1}^{-1} (t)$ is connected for every $t \in T_1$ and ${p_2}^{-1}(t)$ is connected for every $t \in T_2$.}

This leaves us with four interrelated ways to capture the information of a Heegaard splitting and they are each in 1-1 corresponance: Heegaard diagrams, Stallings maps, virtually special square complex structures, pairs of actions of $\pi_1(\Sigma)$ on trees. There are combinatorial moves corresponding to handle slides for Heegaard diagrams for each of the other three structures. Just as there are many Heegaard diagrams for a single Heegaard splitting, there are many homotopy classes of maps, many square complex structures, and many pairs of actions of $\pi_1(\Sigma)$ on trees all encoding the same Heegaard splitting.

This geometric picture leads to the question of characterizing the relationship between properties of these structures, properties of the Heegaard splitting and the nature of the $3$-manifold they describe. This work is completed for genus one Heegaard splittings in Section~\ref{lensspaces}, which is devoted to Lens spaces. The tractability of special cube complexes for geometric group theory methods led to a proof of the virtual Haken conjecture \cite{agol}.  A program exploring the higher genus cases might give a $2$-dimensional and similarly tangible perspective on the classification of $3$-manifolds. 

More specifically, one might look for algorithms to minimize the area of $\mathscr{C}$ in terms of the core, relating properties of $\mathscr{C}$ for irreducible 3-manifolds to properties of its JSJ-decomposition, and exploring how to leverage methods of special square complexes to prove theorems about $3$-manifolds via the augmented Heegaard diagram.

\subsubsection*{Outline} Section~\ref{prelim} lays out the definitions and prelimaries of Heegaard spltitings as well as the geometric group theory tools that are used to understand Stallings maps, such as special square complexes and the Guirardel core. 
Section~\ref{encoding} defines Stallings maps, and details their relationship to Heegaard diagrams. Section~\ref{mainthm} relates Stallings maps and irreducibility of Heegaard splittings, proving Theorem~\ref{thm:pinch}. Section~\ref{sec: VH} constructs the augmented Heegaard diagram and proves Theorem~\ref{thm:augheeg}. Section~\ref{directions} lists questions for future study. Section~\ref{lensspaces} works through the example of Heegaard splittings of genus one, finding their Stallings maps, and Guirardel cores.

\subsubsection*{Acknowledgements}
The author acknowledges Dennis Sullivan for suggesting this problem and guiding the author through a PhD. Great thanks are due to Mark Hagen and Chris Leininger, who listened to endless iterations of this work, providing suggestions, feedback and support, and to Scott Taylor for reading through an initial draft in detail and providing some corrections. Thank you to Moira Chas for repeatedly insisting that this work should be published. Finally, the author acknowledges Tali Pinsky, Tarik Aougab, Nissim Ranade, Cameron Crowe and Michael Wan for helpful conversations that brought clarity.
\section{Preliminaries}
\label{prelim}
We recall some definitions and facts that will be useful in later sections.

\subsection{Heegaard splittings and diagrams}
\begin{defn}
A \textit{Heegaard splitting} or \textit{Heegaard decompostion} of $M$ is a decomposition $M=\mathcal{H}_1 \cup \mathcal{H}_2$, where $\mathcal{H}_1$ and $\mathcal{H}_2$ are homeomorphic $3$-manifolds that intersect in their shared boundary, which is a surface of genus $g$. Further, each $\mathcal{H}_i$ is a \textit{handle body}. Namely, $\mathcal{H}_1$ and $\mathcal{H}_2$ are the result of taking a closed $3$-ball and attaching $g$ $1$-handles, or solid cylinders.

\begin{figure}[h]
\centering \includegraphics[width=1.5in]{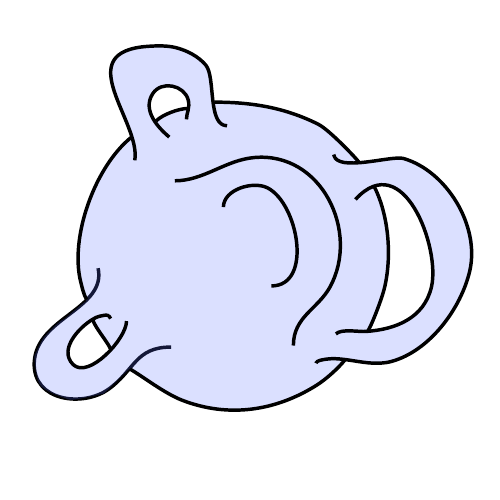}
\caption{A handle body of genus four.}
\end{figure}

The shared boundary $\partial \mathcal{H}_1 = \partial \mathcal{H}_2$ will be called the \textit{splitting surface} $\Sigma_g$, where $g$ is the genus.\\
\end{defn}

Every compact $3$-manifold without boundary has infinitely many Heegaard decompositions (of higher and higher genus).

\begin{defn}
A \textit{Heegaard diagram} describing a Heegaard splitting is a triple $(\Sigma_g, \mathscr{A}, \mathscr{B})$, where $\mathscr{A}$ and $\mathscr{B}$ are sets of $g$ curves on a surface of genus $g$, $\Sigma_g$, with the following properties.
\begin{enumerate}
\item The curves in $\mathscr{A}$ represent a basis for a Lagrangian subspace of homology (they are a maximal set of homologically non-trivial, independent, disjoint representatives) each with null homotopic image under $\iota_{1}$, the inclusion of $\Sigma_g$ into the handle body $\mathcal{H}_1$.

\item the curves in $\mathscr{B}$ represent a basis for a Lagrangian subspace of homology (they are a maximal set of homologically non-trivial, independent disjoint representatives) each with null homotopic image under $\iota_{2}$, the inclusion of $\Sigma_g$ into the handle body $\mathcal{H}_2$.
\end{enumerate}
It is common to call the free homotopy classes in $\mathscr{A}$ \textit{blue diagram curves} and those in $\mathscr{B}$ \textit{red diagram curves}. They will be depicted in these colours in figures.
\end{defn}

The curves in $\mathscr{A}$ dictate the attachment of $\mathcal{H}_1$ to $\Sigma_g$ with the following procedure. Cut $\Sigma_g$ along the curves of $\mathscr{A}$. This yields a sphere with $2g$ disks removed, because the curves are homologically non-trivial and independent and because there are $g$ of them. Label all the boundary components by the curve in $\mathscr{A}$ they were cut from. Glue $2g$ disks to this space to form a sphere. Attach a $3$-ball to the sphere. Finally identify pairs of the glued on disks according to the labeling on their boundary. Similarly, the curves in $\mathscr{B}$ dictate the attachment of $\mathcal{H}_2$ to $\Sigma_g$, and thus one can reconstruct a Heegaard splitting from a diagram.

\begin{defn} \label{handleslide}
Two Heegaard diagrams $(\mathscr{A}, \mathscr{B})$ and $(\mathscr{A}', \mathscr{B}')$ on a surface of genus $g$ differ by a \textit{handle slide} if $\mathscr{B}=\mathscr{B}'$, and $\mathscr{A} \setminus \{ \gamma \} = \mathscr{A}' \setminus \{ \gamma ' \}$ such that for some $\delta \in \mathscr{A}$, $\delta$, $\gamma$, and $\gamma '$ bound a pair of pants. The roles of $\mathscr{A}$ and $\mathscr{B}$ may be reversed as well.
\end{defn}

\begin{thm}
(Reidemeister-Singer \cite{reidemeister} \cite{singer}) Given two Heegaard diagrams for a Heegaard decomposition, one can be obtained from the other by performing a finite sequence of handle slides.
\end{thm}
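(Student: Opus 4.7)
The statement reduces to an assertion about a single handlebody: if $\mathscr{A}$ and $\mathscr{A}'$ are two cut systems on $\partial\mathcal{H}_1$ (each a set of $g$ disjoint simple closed curves bounding a complete system of compressing disks in $\mathcal{H}_1$), then $\mathscr{A}$ and $\mathscr{A}'$ are related by a finite sequence of handle slides. A handle slide of an $\mathscr{A}$-curve depends only on $\Sigma_g$ and $\mathscr{A}$ and leaves $\mathscr{B}$ untouched, so once this handlebody statement is established, I would first modify $\mathscr{A}$ to $\mathscr{A}'$ by slides in $\mathcal{H}_1$, keeping $\mathscr{B}$ fixed, and then modify $\mathscr{B}$ to $\mathscr{B}'$ by slides in $\mathcal{H}_2$, keeping $\mathscr{A}'$ fixed.

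For the handlebody statement, represent $\mathscr{A}$ and $\mathscr{A}'$ by disjoint meridian disk systems $D = D_1 \sqcup \cdots \sqcup D_g$ and $D' = D_1' \sqcup \cdots \sqcup D_g'$ in $\mathcal{H}_1$, isotoped to meet transversely. The intersection $D \cap D'$ is a disjoint union of arcs and circles in each disk. Since $\mathcal{H}_1$ is irreducible, a standard innermost-disk argument (an innermost circle in some $D_j'$ bounds a disk whose boundary bounds a subdisk of some $D_i$; the union is a sphere in the handlebody, hence bounds a ball over which one can isotope) removes every circle of intersection by an ambient isotopy that fixes the cut systems on $\Sigma_g$. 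Thus one may assume $D \cap D'$ consists of arcs only, and I would induct on the total number $N = |D \cap D'|$ of arcs.

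If $N > 0$, choose an arc $\alpha \subset D_i \cap D_j'$ outermost in $D_j'$, cobounding with a subarc $\beta \subset \partial D_j'$ a disk $\Delta \subset D_j'$ whose interior misses $D$. Surger $D_i$ along $\alpha$ using two parallel copies of $\Delta$: this replaces $D_i$ by two disjoint embedded disks $D_i^+, D_i^-$, each meeting $D'$ in strictly fewer arcs than $D_i$ did. The boundary $\partial D_i^{\pm}$ is obtained from $\alpha_i = \partial D_i$ by sliding along $\beta$ over the curve $\alpha_j = \partial D_j$: inside a regular neighbourhood of $D_i \cup \Delta \cup D_j$ the three curves $\alpha_i$, $\partial D_i^{\pm}$, $\alpha_j$ cobound a pair of pants on $\Sigma_g$, which is precisely the situation of Definition~\ref{handleslide}. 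At least one of $\{D_i^+\} \cup \{D_k : k \neq i\}$ or $\{D_i^-\} \cup \{D_k : k \neq i\}$ is again a complete cut system (if both failed, surgery would have split a complete system into an incomplete one, contradicting that the original already cut $\mathcal{H}_1$ into a ball). Replace $\mathscr{A}$ by the handle-slid system and repeat; after at most $N$ steps one may assume $D \cap D' = \emptyset$. Finally, two disjoint cut systems of the same handlebody are related by a short explicit sequence of handle slides and ambient isotopies, seen by cutting $\mathcal{H}_1$ along $D$ to get a ball with $2g$ paired disks on the boundary and comparing the placements of $D'$.

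The main obstacle is the inductive step: verifying rigorously that (at least) one of $D_i^{\pm}$ completes to a cut system, and translating the disk surgery into exactly one handle slide in the combinatorial sense of Definition~\ref{handleslide}. Care is needed because $\beta$ lies on $\partial D_j'$ rather than $\partial D_j$, so identifying the correct handle to slide over requires tracking the surgery region; the case $i = j$ (arc of self-intersection between $D_i$ and $D_i'$) must be treated separately, and the circles-to-arcs reduction relies on the (standard but delicate) fact that innermost disks in irreducible $3$-manifolds can always be eliminated. Once these local moves are certified as handle slides, the induction on $|D \cap D'|$ closes the argument.
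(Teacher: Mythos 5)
The paper itself does not prove this theorem: it appears in the Preliminaries section and is cited to Reidemeister and Singer, so you were working without a paper proof to mirror. Your overall strategy — reduce to a single handlebody, use an innermost-circle argument (valid because handlebodies are irreducible) to make $D\cap D'$ consist of arcs, then induct on $|D\cap D'|$ via an outermost-arc surgery — is the classical approach to connectivity of the cut-system complex, as found for instance in Johannson's or Wajnryb's treatments, and the top-level structure is sound.

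However, the step you yourself flag as ``the main obstacle'' is a genuine gap, and the sentence you offer in its place is wrong. You write that inside a regular neighbourhood of $D_i\cup\Delta\cup D_j$ the curves $\alpha_i=\partial D_i$, $\partial D_i^{\pm}$, and $\alpha_j=\partial D_j$ cobound a pair of pants. But $\Delta$ is a subdisk of $D_j'$, not of $D_j$; the disk $D_j$ plays no role in the surgery and need not come anywhere near $\Delta$. The curves that actually cobound a pair of pants (a regular neighbourhood of $\partial D_i\cup\beta$ in $\Sigma_g$) are $\partial D_i$, $\partial D_i^{+}$, and $\partial D_i^{-}$. For that pair of pants to witness a handle slide in the sense of Definition~\ref{handleslide}, the discarded curve $\partial D_i^{-}$ (say) would have to be isotopic to one of the surviving $\mathscr{A}$-curves $\partial D_k$, and there is no reason for this: $\beta$ is a subarc of the \emph{other} system's curve $\partial D_j'$ and may cross several of the $\partial D_k$ as it wanders over $\Sigma_g$. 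Consequently a single outermost surgery is not, in general, a single handle slide; it is a finite \emph{sequence} of handle slides, one for each time $\beta$ passes over a handle of the $\mathscr{A}$-system. Proving that is where the real content lies: one cuts $\Sigma_g$ along $\mathscr{A}$ to get a planar surface with $2g$ boundary circles, tracks $\beta$ through this planar picture, and builds a chain of embedded pairs of pants each of which does certify one handle slide — precisely the technique the paper uses in Case~2 of its proof of Theorem~\ref{thm:wavemove}, where replacing a diagram curve by a wave-move cuff is decomposed into a chain of slides via a pair-of-pants decomposition. Without this step the induction on $|D\cap D'|$ does not close, and your final ``short explicit sequence'' claim for disjoint cut systems (which also requires a similar chain argument) is likewise left unjustified.
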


\subsection{VH complexes and special square complexes}
\label{square complexes}
Here, the definitions from ``VH complexes, towers and subgroups of $F \times F$" by Bridson and Wise~\cite{cleanVHcomplexes} as well as ``Special Cube complexes" by Haglund and Wise~\cite{haglundwise} are introduced.\\
\begin{defn}
A \textit{combinatorial map} of CW complexes is a continuous map between two CW complexes such that each open cell is sent homeomorphically onto an open cell. A CW complex is called \textit{combinatorial} if all its attaching maps are combinatorial. Finally a \textit{square complex} is a combinatorial CW complex of dimension two, where the top dimensional cells are attached along loops that are four edges long.\\
\end{defn}

\begin{defn}
\label{vh}
A square complex is VH if the edges can be partitioned into two sets: vertical (V) and horizontal (H) so that the attaching map of each  $2$-cell alternates between vertical and horizontal edges.\\
\end{defn}

 Let $X$ be a VH complex. Let $X_H$ be the $0$-skeleton $X^{(0)}$ with the horizontal edges attached and let $X_V$ be the $0$-skeleton $X^{(0)}$ with the vertical edges attached.\\
 
 \begin{defn}
  The connected components of the complement $X \setminus X_H$ are called \textit{horizontal corridors}. Similarly, the connected components of the complement $X \setminus X_V$ are called the \textit{vertical corridors}. Consider how the horizontal corridors must attach to $X_H$ to reconstruct $X$ (and similarly for the vertical corridors and $X_V$). A VH complex is \textit{clean} if the attaching maps of the horizontal (and respectively vertical) corridors to $X_H$ (and respectively $X_V$) are injective.\\
 \end{defn}
 
 VH complexes are the predecessors of \textit{special cube complexes} which are cube complexes that don't exhibit certain pathological behaviours. We recall the following definition, but ask the reader to see \cite{haglundwise} for the details.\\
 \begin{defn}
 A square complex is \textit{special} if:
 \begin{enumerate}
 \item Each hyperplane embeds.
 \item No hyperplane directly self osculates.
 \item No hyperplane indirectly self osculates.
 \end{enumerate}
 
 A square complex is \textit{virtually special} if it has a finite-sheeted cover that is special.
 \end{defn}
 See \cite{haglundwise} for definitions of hyperplane and direct and indirect self osculation.

\subsection{Guirardel core}
\label{prelim:guirardel} 
Below, the definitions of ``Core and intersection number for group actions on trees" by Guirardel \cite{guirardel} are introduced. The reader is encouraged to consult this paper as a reference for more details.

Begin with two actions of a group $G$ on a trees $T_1$ and $T_2$, or equivalently an action of $G$ on $T_1 \times T_2$. In this paper we will be looking at the special case where $T_1 =T_2 = T$ and it is an infinite tree where every vertex is of the same finite valence, and edges are of length 1. Below, we give Vincent Guirardel's definitions for this case \cite{guirardel}\\

\begin{defn}
A \textit{direction} $\delta$ is a connected component of $T \setminus \{x\}$ for some point $x$ in $T$. A \textit{quadrant} $Q = \delta_1 \times \delta_2$ is the cartesian product of two directions (and therefore is a subset of $T\times T$).\\
\end{defn}

\begin{defn}
Let $(t_1, t_2) \in T \times T$. A quadrant $Q =  \delta_1 \times \delta_2$  is \textit{heavy} if there exists a sequence $\{\gamma_n\} \subseteq G$ such that
\begin{itemize}
\item $\gamma_n \cdot (t_1,t_2) \in Q$ for all $n$,
\item $d_{\delta_1}(t_1, \gamma_n \cdot t_1) \rightarrow \infty$ as $n \rightarrow \infty$ (where $d_{\delta_1}(t_1, \gamma_n \cdot t_1)$ is the distance between $t_1$ and $\gamma_n \cdot t_1$ in $\delta_1$) and
\item  $d_{\delta_2}(t_2, \gamma_n \cdot t_2) \rightarrow \infty$ as $n \rightarrow \infty$ (where $d_{\delta_2}(t_2, \gamma_n \cdot t_2)$ is the distance between $t_2$ and $\gamma_n \cdot t_2$ in $\delta_2$).
\end{itemize}
$Q$ is \textit{light} if it is not heavy.
\end{defn}

Now we can define the core, which is used to compare the pair of actions on $T$.\\
\begin{defn} \label{defn:guirardel}
The \textit{core} $C$ of $T \times T$ with respect to the action of $G$ is the complement of the union of light quadrants in $T \times T$. The \textit{intersection number} of the two actions is the co-volume of the action of $G$ on $C$. In other words, the intersection number is the number of 2-cells in $C/G$, or the area of $C/G$.
\end{defn}

The larger the intersection number, the larger the obstruction to finding a third tree $T_3$ with an action of $\pi_1(\Sigma_g)$ and surjective equivariant maps onto $T_1$ and $T_2$. In the case where $G$ acts on $T_1$ and $T_2$ in the same way, $C/\pi_1(\Sigma_g)$ has zero area. It is common for the area to be infinite, and we will confront this fact in Section~\ref{sec: VH}.

We finish by stating a characterization of the core that will be helpful later on.\\
\begin{thm}[Proposition 5.1 in \cite{guirardel}]
Let $G$ act on two trees $T_1$ and $T_2$ such that $C \neq \emptyset$. Let $F \subseteq T_1 \times T_2$ be a non-empty closed connected $G$-invariant subset with connected fibres. Then $F$ contains $C$. Moreover, $C$ is the intersection of all such sets $F$.
\end{thm}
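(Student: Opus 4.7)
The plan is to prove the containment $F \supseteq C$ by contraposition, then verify that $C$ itself satisfies the hypotheses on $F$, so that $C$ equals the intersection of all such sets. Unpacking definitions, the containment amounts to: for every $(t_1,t_2) \in T_1 \times T_2 \setminus F$, there exists a light quadrant $Q$ containing $(t_1,t_2)$.

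First I would construct a separating quadrant. Given $(t_1,t_2) \notin F$, the fibre $F \cap p_1^{-1}(t_1)$ is (by the connected-fibres hypothesis) a subtree of $\{t_1\} \times T_2$, possibly empty, that does not contain $(t_1,t_2)$. Basic tree geometry then supplies a direction $\delta_2$ based at some $x_2 \in T_2$ with $t_2 \in \delta_2$ such that $\{t_1\} \times \delta_2$ is disjoint from $F$. Using the connectedness of $F$ together with the connected $p_2$-fibres, I would then select a direction $\delta_1$ based at some $x_1 \in T_1$ with $t_1 \in \delta_1$ so that the $T_1$-projection of $F \cap (\delta_1 \times \delta_2)$ is bounded, or even empty. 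Set $Q = \delta_1 \times \delta_2$.

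Next I would show that $Q$ is light. Suppose for contradiction that a sequence $\{\gamma_n\} \subseteq G$ witnessed heaviness, so that $\gamma_n \cdot (t_1,t_2) \in Q$ with $d_{\delta_i}(t_i, \gamma_n \cdot t_i) \to \infty$ for $i = 1,2$. Pick any $(s_1,s_2) \in F$, so by $G$-invariance $\gamma_n \cdot (s_1,s_2) \in F$. The connected-fibres hypothesis forces tree-geodesic paths inside $F$ from $\gamma_n \cdot (s_1,s_2)$ to a fixed base point in $F$ to sweep through both coordinates, producing points of $F$ arbitrarily deep in $\delta_1 \times \delta_2$; this contradicts the way $\delta_1$ and $\delta_2$ were chosen to bound the projection of $F \cap Q$.

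For the moreover statement, I would verify that $C$ itself belongs to the family of $F$'s. Non-emptiness is assumed; closedness is immediate since the complement of $C$ is a union of open quadrants; and $G$-invariance follows because the heavy/light dichotomy is $G$-equivariant. Connectedness of $C$ and of its fibres is more delicate: I would argue that any disconnection (either a gap in $C$ itself or in a single fibre) would have to be coverable by light quadrants, and then show (by a direct tree-theoretic argument in each tree, paralleling the construction above) that this is incompatible with $C$ being the complement of \emph{all} light quadrants. Once $C$ is shown to satisfy the hypotheses, the first part gives $C \subseteq F$ for every qualifying $F$, and since $C$ is itself such an $F$, we conclude $C = \bigcap F$.

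The main obstacle is the lightness argument in the third paragraph: it requires converting the local separation of $(t_1,t_2)$ from $F$ into a global statement ruling out deep $G$-orbit excursions into $Q$, and this is precisely where the $G$-invariance and the connected-fibres hypothesis must be used in tandem. A secondary difficulty is establishing connectedness and connected fibres of $C$ itself, which is really a separate tree-combinatorial result underpinning the whole setup.
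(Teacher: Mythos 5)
The paper does not prove this statement; it is stated as a black-box import, attributed to Proposition 5.1 of \cite{guirardel}, and the reader is directed there. So there is no in-paper argument to compare against. That said, your outline is worth a few comments.

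Your overall architecture (show every point off $F$ lies in a light quadrant, so $C \subseteq F$; then show $C$ itself is a legitimate $F$) is exactly the shape of the standard argument. Two remarks. First, your lightness argument in the third paragraph is more elaborate than it needs to be, and the phrase ``tree-geodesic paths inside $F$'' is suspect: nothing you have assumed makes $F$ geodesically convex in $T_1 \times T_2$, so those geodesics need not stay in $F$. Fortunately you do not need them. Once you have a quadrant $Q = \delta_1 \times \delta_2$ with $Q \cap F = \emptyset$, lightness is immediate from $G$-invariance and non-emptiness alone: fix $(s_1,s_2) \in F$ and note $d(\gamma_n \cdot s_i, \gamma_n \cdot t_i) = d(s_i,t_i)$ is constant, while $\gamma_n \cdot t_i$ goes to infinity inside $\delta_i$, so $\gamma_n\cdot(s_1,s_2)$ eventually lies in $Q$, contradicting $G$-invariance of $F$ and $Q \cap F = \emptyset$. (Closedness and connectedness play no role at this step.) Second, and more seriously, the two places where the real work lives are precisely the ones you assert without proof: constructing a quadrant through $(t_1,t_2)$ genuinely disjoint from $F$ (your second paragraph hedges with ``bounded projection, or even empty,'' but disjointness is what the clean argument needs, and producing it from connectedness of $F$ plus connectedness of fibres requires a non-trivial lemma), and establishing that $C$ itself is connected with connected fibres (your fourth paragraph acknowledges this but waves at it). These are separate supporting results in Guirardel's paper, not consequences one can dismiss in a sentence, so as written your proposal reduces the proposition to two unproven claims of comparable depth.
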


\section{Heegaard splittings as Stallings maps}
\label{encoding}

\subsection{From Heegaard splittings to two-dimensional maps}
\label{splittingstomaps}
Given a Heegaard splitting $M=\mathcal{H}_1 \cup \mathcal{H}_2$, with a splitting surface $\Sigma_g = \partial \mathcal{H}_1 = \partial \mathcal{H}_2$, there are two inclusion maps $\iota_i: \Sigma_g \longrightarrow \mathcal{H}_i$. Knowing these two maps is sufficient to reconstruct $M$. The two maps can be expressed as a single product map $\iota_1 \times \iota_2: \Sigma_g \longrightarrow \mathcal{H}_1 \times \mathcal{H}_2$.\\

Now, we simplify by considering this map up to homotopy, and considering the spaces up to homotopy equivalence. We post-compose $\iota_1$ and $\iota_2$ by a deformation retraction that takes $\mathcal{H}_i$ to a graph of genus $g$, $\vee^g S^1$. Let $\iota'_1$ and $\iota'_2$ be the respective compositions. This gives a map from a surface to a product of graphs.

\[f= \iota'_1 \times \iota'_2: \Sigma_g \longrightarrow G_1 \times G_2
\]

There was some choice in the construcion of $f$, above, and to account for this, we consider all possible outcomes together as a class of maps $\mathcal{A}_{M,\mathcal{H}_1, \mathcal{H}_2}$.\\

\begin{defn}
\label{defn: describe}
Let $M= \mathcal{H}_1 \cup \mathcal{H}_2$ be a Heegaard splitting. Let $\Sigma_g$ be the splitting surface and let $\iota_i$ be in the inclusion maps $\Sigma_g \rightarrow \mathcal{H}_i$ for $i=1, 2$. The set $\mathcal{A}_{M,\mathcal{H}_1, \mathcal{H}_2}$ is defined as $\{ f: \Sigma_g \rightarrow G_1 \times G_2  \}$, where $G_i$ are graphs of genus $g$ and each coordinate $f_i$ is homotopic to $\iota_i$, up to homotopy equivalence of the target, $\mathcal{H}_i$.

We say that a map $f  \in \mathcal{A}_{M,\mathcal{H}_1, \mathcal{H}_2}$ \emph{encodes} or \textit{describes} the Heegaard splitting $M=\mathcal{H}_1\cup \mathcal{H}_2$.
\end{defn}

Note that if $f \in \mathcal{A}_{M, \mathcal{H}_1, \mathcal{H}_2}$ and $f'$ is homotopic to $f$, $f'$ must also be in $\mathcal{A}_{M, \mathcal{H}_1, \mathcal{H}_2}$. Therefore $\mathcal{A}_{M, \mathcal{H}_1, \mathcal{H}_2}$ can be partitioned into homotopy classes.

These maps are truly two-dimensional in the sense that both their domain and target spaces are two-dimensional and they display properties such as folding, and branch covering behaviour (locally homotopic to the holomorphic map $z \mapsto z^n$).

\begin{rmk}
\label{properties}
Note that $\iota'_i$ has the following properties:
\begin{enumerate}
\item The homology kernel of $\iota'_i$  has rank $g$ and has a basis with representatives that are simple, closed, non-intersecting curves.
\item These representatives have null homotopic image (because being null-homologous and null-homotopic  are equivalent in a graph).
\item The image of the homology under ${\iota'_i}_*$ must be rank $g$ as well, meaning that ${\iota'_i}_*$ is surjective on homology. This implies that  the map induced on fundamental groups must also be surjective, since the codomain is a graph.
\end{enumerate}
\end{rmk}

The first property implies the other two above. These properties are not changed up to homotopy nor homotopy equivalence, and therefore any map in $\mathcal{A}_{M,\mathcal{H}_1, \mathcal{H}_2}$ will bear these properties in each of its coordinates.

\subsection{From two-dimensional maps to Heegaard splittings}

Begin with a map $f$ from a surface of genus $g$ to the product of two bouquets of $g$ circles, whose coordinates' homology kernel is of rank $g$, and each have a basis with representatives that are simple, closed, non-intersecting curves (property (1) from Remark~\ref{properties} above). One may construct a Heegaard splitting $M=\mathcal{H}_1 \cup \mathcal{H}_2$. This is done simply by taking the two sets of $g$ curves representing a basis for the homology kernels as a Heegaard diagram. Indeed, $f$ describes $M = \mathcal{H}_1 \cup \mathcal{H}_2$ uniquely.

\begin{prop}
\label{factorthrough}
Consider a map $f: \Sigma_g \rightarrow G_1\times G_2$, where $G_i$ are graphs of genus g. Suppose $f=f_1 \times f_2$ and the homology kernels of ${f_i}_*$ with $i=1,2$ have bases that can each be represented by a set of $g$ disjoint simple closed curves. Then there exists a unique Heegaard splitting $M=\mathcal{H}_1 \cup \mathcal{H}_2$ such that $f \in \mathcal{A}_{M,\mathcal{H}_1, \mathcal{H}_2}$.
\end{prop}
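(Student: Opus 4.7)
My plan is to use the two sets of $g$ disjoint simple closed curves provided by the hypothesis as the curves of a Heegaard diagram on $\Sigma_g$, and to produce $M = \mathcal{H}_1 \cup \mathcal{H}_2$ via the standard disk-attachment construction from Section~\ref{prelim}. The existence half of the proposition then comes down to verifying that $f \in \mathcal{A}_{M,\mathcal{H}_1,\mathcal{H}_2}$, while uniqueness follows from kernel matching combined with the rigidity of the handlebody construction.

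For existence, since both $\mathcal{H}_i$ and $G_i$ are $K(F_g,1)$ spaces, homotopy classes of maps $\Sigma_g \to G_i$ are determined by homomorphisms $\pi_1(\Sigma_g) \to F_g$ up to conjugation, and homotopy equivalence of the target amounts to post-composition with an element of $\mathrm{Aut}(F_g)$. It therefore suffices to check that $\ker f_{i*} = \ker \iota_{i*}$ as subgroups of $\pi_1(\Sigma_g)$. The kernel $\ker \iota_{i*}$ is the normal closure $N$ of the $g$ diagram curves, a standard handlebody computation. For the inclusion $N \subseteq \ker f_{i*}$, each diagram curve is null-homotopic under $f_i$---this follows from the basis hypothesis together with Remark~\ref{properties}, which identifies null-homologous with null-homotopic for the relevant representatives in the graph $G_i$. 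For the reverse inclusion, the induced map $\overline{f_{i*}} \colon \pi_1(\Sigma_g)/N \cong F_g \to F_g$ is surjective on abelianization by the homology hypothesis, and the Hopfian property of $F_g$ should then force $\overline{f_{i*}}$ to be an automorphism, so $\ker f_{i*} = N$ and the images match.

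Uniqueness falls out of the same setup: if $f \in \mathcal{A}_{M',\mathcal{H}'_1,\mathcal{H}'_2}$ for another splitting, then $\ker \iota'_{i*} = \ker f_{i*} = \ker \iota_{i*}$, so the meridian systems of $\mathcal{H}'_i$ and $\mathcal{H}_i$ normally generate the same subgroup of $\pi_1(\Sigma_g)$. Since a handlebody is determined up to homeomorphism rel boundary by this normal closure, $\mathcal{H}'_i \cong \mathcal{H}_i$, and hence $M \cong M'$ as Heegaard splittings.

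The main obstacle is showing $\overline{f_{i*}} \colon F_g \to F_g$ is an automorphism from the abelianization condition alone. Surjection on abelianization is not in general sufficient to force surjectivity of an endomorphism of $F_g$; for instance $a \mapsto aba^{-1},\ b \mapsto b$ is an injective endomorphism of $F_2$ with identity abelianization but image a proper subgroup. The argument therefore needs to invoke Hopfianness carefully---most likely by first arguing that $f_{i*}$ itself is surjective on $\pi_1$ (using the specific surface-group-to-$F_g$ structure rather than just homology), which forces $\overline{f_{i*}}$ to be onto, and then applying the Hopfian property of $F_g$ to upgrade surjectivity to an automorphism.
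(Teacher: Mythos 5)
Your approach is more algebraic than the paper's. The paper factors $f_1$ through $\mathcal{H}_1$ topologically: it attaches $g$ disks along the null-homotopic diagram curves, then a $3$-ball (using that $G_1$ has trivial $\pi_2$), obtaining a map $d\colon\mathcal{H}_1\to G_1$, and then constructs a homotopy inverse $e$ for $d$ by hand. You instead observe that $\mathcal{H}_i$ and $G_i$ are both $K(F_g,1)$'s, so everything reduces to the induced map on $\pi_1$, and what must be shown is that $\overline{f_{i*}}\colon F_g\to F_g$ is an automorphism. That is a valid and genuinely different framing. But do not phrase it as ``it suffices to check $\ker f_{i*}=\ker\iota_{i*}$'': matching kernels makes $\overline{f_{i*}}$ injective, and injective endomorphisms of $F_g$ need not be automorphisms ($a\mapsto a^2$, $b\mapsto b$). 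The automorphism statement you later aim at is the correct target, and it needs surjectivity as well, not just kernel agreement.

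There are then two issues with how you proceed, one of which you flag yourself. First, the assertion that $\overline{f_{i*}}$ ``is surjective on abelianization by the homology hypothesis'' is not right: the hypothesis that $\ker f_{i*}$ on $H_1$ is exactly the span of the diagram curves gives \emph{injectivity} of the induced map $\mathbb{Z}^g\to\mathbb{Z}^g$, not surjectivity. A rank-$g$ image in $\mathbb{Z}^g$ is only a finite-index sublattice; already for $g=1$, the second-coordinate projection $T^2\to S^1$ composed with the degree-two self-map of $S^1$ has homology kernel $\mathbb{Z}\cdot\alpha_1$ with a simple closed curve basis, yet has image $2\mathbb{Z}$. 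Second --- the issue you do flag --- even genuine $H_1$-surjectivity would not by itself force $\pi_1$-surjectivity of $\overline{f_{i*}}$, so Hopfianity cannot be applied directly. The paper's proof relies on exactly the same fact: it constructs $e$ using ``$\pi_1$-surjectiveness'' of $d$, citing property (3) of Remark~\ref{properties}, where it is asserted that property (1) implies property (3); that implication is not justified and is suspect for the reason above. So the gap you identify is real, and is shared with the paper's argument; a complete proof along either route would need $\pi_1$-surjectivity of the $f_i$ as part of the hypothesis (which holds automatically for the actual inclusion maps $\iota_i'$, so nothing is lost in the paper's downstream applications).
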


\begin{proof}
This proof considers one coordinate of the map $f$ at a time. We show that there is a unique inclusion of $\Sigma_g$ into a handle body $\mathcal{H}_1$ as its boundary that is equivalent to $f_1$ up to homotopy and homotopy equivalence of the target. The proof for $f_2$ follows similarly.

\paragraph{Existence} Consider $f_1: \Sigma_g \rightarrow G_1$, the first coordinate of $f$.  Recall that property (1) implies properties (2) and (3). Properties (1) and (2) guarantee the existence of  $g$ simple, closed, non-intersecting curves with null homotopic image (which represent a basis of the homology kernel of $f_1$). Therefore, the map $f_1$ factors through a space constructed by gluing $g$ disks to $\Sigma_g$ along these curves. We note that this space has non-trivial $\pi_2$, but that the target $G_1$ does not, and so $f_1$ must in fact factor through inclusion to the handle body $\mathcal{H}_1$ obtained by gluing a $3$-ball along the $\pi_2$ class of the previous space. See Figure~\ref{fig: factor through} below. Note that the map $d$ (in Figure~\ref{fig: factor through}) induces an isomorphism on homology and inherits the property (3) from $f_1$ ($\pi_1$-surjectiveness).

\begin{figure}[h]
\[
\begin{tikzcd}
\Sigma_g \includegraphics[width=4cm]{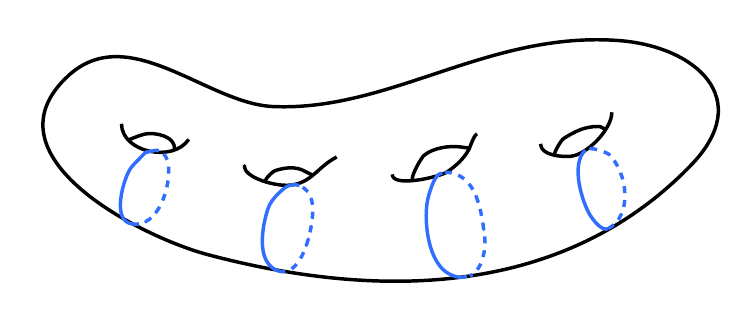} \arrow[rr, yshift=0.8cm, "f_1"] \arrow[dr] & & \includegraphics[width=2cm]{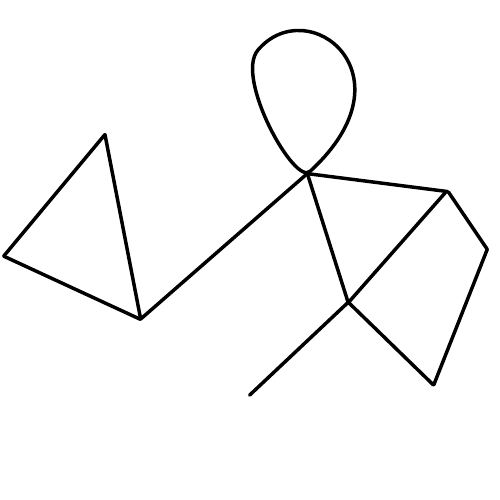} G_1\\
 & \mathcal{H}_1 \includegraphics[width=4cm]{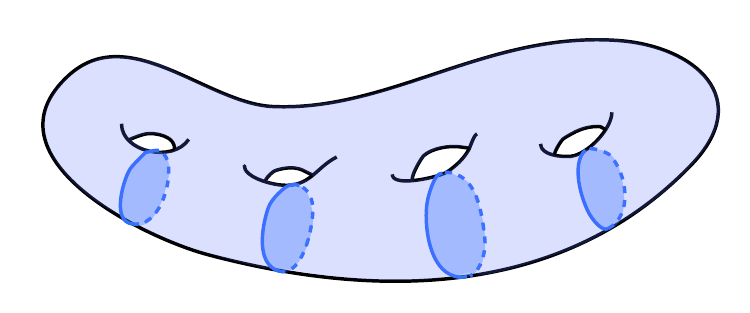} \arrow[ur, "d"] & \\
\end{tikzcd}
\]
\caption{The map $f_1$ must factor through a handle body $\mathcal{H}_1$.}
\label{fig: factor through}
\end{figure}

It remains to show that the map $d$ is part of a homotopy equivalence, and also the uniqueness of $\mathcal{H}_1$ with this property.

To show that $d$ is part of a homotopy equivalence, consider the following diagram.
\[
\begin{tikzcd}
A:= \vee^g S^1 \arrow[rr, bend left = 10, "h"] & & \mathcal{H}_1 \arrow[ll, bend left = 10] \arrow[r, "d"] & G_1 \arrow[rr, bend left = 10, "j"] & & \vee^g S^1 =: B \arrow[ll, bend left = 10] \arrow[lllll, dashed, bend right=50, "e"] 
\end{tikzcd}
\]

The diagram shows homotopy equivalences $h$ and $j$ between $\vee^g S^1$ and $\mathcal{H}_1$ and a graph of genus $g$ respectively. It suffices to show that $j \circ d \circ h$ is part of a homotopy equivalence. To this end, a map, $e$ will be constructed that forms a homotopy equivalence together with $ j \circ d \circ h$. The labels $A$ and $B$ help clarify discussion about the map $e$.

The fundamental groups of these spaces will be discussed assuming the chosen base point of both copies of $\vee^g S^1$ is the wedge point. 

Consider a copy of $S^1$ in $B$. It represents a generator of the fundamental group, $[\alpha]$. The map $d$ inherits $\pi_1$-surjectiveness from $f_1$, and so there is a preimage of $[\alpha]$ by $j \circ d \circ h$. Call this preimage $[a]$. Pick some $a \in [a]$. Now we construct $e$ by sending $\alpha$ to the image of $a$ linearly, and sending each copy of $S^1$ in $B$ similarly to the representative chosen by the procedure outlined in this paragraph.

To show that $e$ and $j \circ d \circ h$ are homotopy inverses, consider a copy of $S^1$ in $B$. Consider it as a based loop $\beta$. By construction, $(j \circ d \circ h) \circ e(\beta)$ and $\beta$ are base-point homotopic. Turning attention to $e \circ (j \circ d \circ h)$, consider a copy of $S^1$ in $A$. Consider it as a loop, $\gamma$, that represents an element of $\pi_1(A)$. Note that the construction of $e$ guarantees that both $[\gamma]$ and its image by $e \circ (j \circ d \circ h)$ have the same image in $\pi_1(A)$.

Simultaneously applying these based homotopies, we get a homotopy between identity and $(j \circ d \circ h) \circ e$ and $e \circ (j \circ d \circ h)$ respectively.


\paragraph{Uniqueness} Uniqueness follows rather quickly. Suppose there were inclusions into two handle bodies $\mathcal{H}_1$ and $\widetilde{\mathcal{H}_1}$ that were equivalent to $f_1$. These are shown below with $d$ and $\tilde{d}$ both homotopy equivalences.
\[
\begin{tikzcd}
 & \widetilde{\mathcal{H}_1} \arrow[rd, bend left = 10, "\tilde{d}"]& \\
\Sigma_g \arrow[ru] \arrow[rr, "f_1"] \arrow[rd]& & G_1 \arrow[lu, bend left = 10] \arrow[ld, bend left = 10]\\
 & \mathcal{H}_1 \arrow[ru, bend left = 10, "d"]
\end{tikzcd}
\]
One can simply compose $d$ and $\tilde{d}$ to show that in fact the two inclusions are equivalent (up to homotopy and homotopy equivalence of the target). Thus each coordinate of  $f$ encodes the attachment of a handle body to $\Sigma_g$ in a unique way, encoding a unique Heegaard splitting.
\end{proof}

From this proof, it is clear that the Heegaard splitting $M=\mathcal{H}_1 \cup \mathcal{H}_2$ can be reconstructed from any map in $\mathcal{A}_{M, \mathcal{H}_1, \mathcal{H}_2}$. This justifies the terminology of Definition \ref{defn: describe} (that a map $f \in \mathcal{A}_{M, \mathcal{H}_1, \mathcal{H}_2}$ \emph{encodes} the Heegaard splitting $M=\mathcal{H}_1 \cup \mathcal{H}_2$).

We summarize the situation with the following corollary.

\begin{cor}
A map $f: \Sigma_g \rightarrow G_1 \times G_2$ encodes a Heegaard splitting iff it satisfies property (1) from Remark~\ref{properties}, Section~\ref{splittingstomaps}.
\end{cor}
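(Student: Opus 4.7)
This corollary is essentially a bookkeeping consequence of what has already been established, so the plan is to extract each direction from the material just developed and verify that nothing is lost under homotopy or homotopy equivalence of the target.

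For the forward direction, suppose $f \in \mathcal{A}_{M,\mathcal{H}_1,\mathcal{H}_2}$. By Definition~\ref{defn: describe}, each coordinate $f_i$ is homotopic, up to homotopy equivalence of $\mathcal{H}_i$, to the inclusion $\iota'_i$. Remark~\ref{properties} records that $\iota'_i$ has property (1), so it suffices to observe that property (1) is preserved by both homotopy of the map and homotopy equivalence of the target. This is immediate: the homology kernel of $(f_i)_*$ depends only on the induced map on $H_1$, which is invariant under homotopy and under post-composition with a homotopy equivalence; and the existence of a basis represented by $g$ disjoint simple closed curves on $\Sigma_g$ is a statement purely about the kernel as a subgroup of $H_1(\Sigma_g)$, so it too is unaffected.

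For the reverse direction, suppose $f : \Sigma_g \to G_1 \times G_2$ satisfies property (1). This is precisely the hypothesis of Proposition~\ref{factorthrough}, which produces a Heegaard splitting $M = \mathcal{H}_1 \cup \mathcal{H}_2$ together with the fact that $f \in \mathcal{A}_{M,\mathcal{H}_1,\mathcal{H}_2}$ (and even asserts uniqueness of this splitting). Thus $f$ encodes a Heegaard splitting in the sense of Definition~\ref{defn: describe}.

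There is no real obstacle here beyond being careful that ``encodes'' in the statement of the corollary matches Definition~\ref{defn: describe}, and that the homotopy-invariance remark used in the forward direction is not glossed over. The only point where I would slow down is to confirm explicitly that the property of ``having a basis representable by $g$ disjoint simple closed curves'' depends only on the kernel subgroup of $H_1(\Sigma_g)$ and not on the particular map $f_i$ chosen within its homotopy class; once this is spelled out, the two directions combine to give the biconditional.
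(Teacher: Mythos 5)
Your proof is correct and follows the same path the paper intends: the forward direction is exactly the observation recorded after Remark~\ref{properties} (that property~(1) is preserved under homotopy of the map and homotopy equivalence of the target, so every map in $\mathcal{A}_{M,\mathcal{H}_1,\mathcal{H}_2}$ inherits it), and the reverse direction is a direct appeal to Proposition~\ref{factorthrough}. The paper leaves this corollary as an immediate consequence without a written proof, and your write-up supplies the bookkeeping faithfully, including the right justification for why property~(1) depends only on the kernel subgroup of $H_1(\Sigma_g)$.
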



\subsection{Heegaard diagrams and handle slides}
\subsubsection*{Heegard diagrams and Stallings maps}
In order to study $\mathcal{A}_{M, \mathcal{H}_1, \mathcal{H}_2}$ more systematically and relate it to Heegaard diagrams, we focus on the subset $\mathcal{B}_{M, \mathcal{H}_1, \mathcal{H}_2} \subseteq \mathcal{A}_{M, \mathcal{H}_1, \mathcal{H}_2}$  where $G_1$ and $G_2$ are bouquets of $g$ circles. Let $\overline{\mathcal{B}}_{M, \mathcal{H}_1, \mathcal{H}_2}$ denote the homotopy classes of maps in $\mathcal{B}_{M, \mathcal{H}_1, \mathcal{H}_2}$. Next, consider the equivalence relation on $\overline{\mathcal{B}}_{M, \mathcal{H}_1, \mathcal{H}_2}$  generated by $[f] \sim [f']$ if and only if there exists $f \in [f]$, $f' \in [f']$ and $k$ such that $k \circ f = f'$, where $k: \vee^g S^1 \rightarrow \vee^g S^1$ is a homeomorphism. Finally, let $B_{M, \mathcal{H}_1, \mathcal{H}_2} := \overline{\mathcal{B}}_{M, \mathcal{H}_1, \mathcal{H}_2} / \sim$.

Notice that $g$ has the effect of permuting the circles of $\vee^g S^1$ or reversing the orientation of some of the circles of $\vee^g S^1$.  Therefore, elements in the quotient, $B_{M, \mathcal{H}_1, \mathcal{H}_2}$ broadly keep track of which part of $\Sigma_g$ wrap around a circle of $\vee^g S^1$ without recording which circle is which, or the direction in which the surface wraps around it. It turns out that this information is equivalent to a Heegaard diagram.

\begin{thm} \label{heegaarddiagram}
There is a bijection $\mathscr{S}: \{ \text{Heegaard diagrams for } M = \mathcal{H}_1 \cup \mathcal{H}_2 \} \rightarrow B_{M, \mathcal{H}_1, \mathcal{H}_2}$. Any map that is a representative of $\mathscr{S}(x)$ is called a \textit{Stallings map} for the Heegaard diagram $x$.
\end{thm}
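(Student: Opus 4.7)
The plan is to construct $\mathscr{S}$ and its inverse explicitly, using Proposition~\ref{factorthrough} to identify Stallings maps with boundary inclusions of $\Sigma_g$ into the handle bodies composed with a deformation retraction onto $\vee^g S^1$. First I define $\mathscr{S}$. Given $(\mathscr{A},\mathscr{B})$ with $\mathscr{A}=\{a_1,\ldots,a_g\}$, the curves $a_i$ bound a disjoint system of meridian disks $D_1,\ldots,D_g\subset\mathcal{H}_1$, determined up to isotopy rel boundary. Cutting $\mathcal{H}_1$ along $\bigsqcup D_i$ yields a $3$-ball $B$; I choose in $B$ a dual spine of $g$ arcs meeting at a single interior vertex, each arc joining the two copies of some $D_i$. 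Collapsing $B$ onto this spine and re-gluing defines a deformation retraction $r_1:\mathcal{H}_1\to \vee^g S^1$ satisfying $r_1^{-1}(\mathrm{wedge\ point})=\bigsqcup D_i$. Set $f_1=r_1\circ\iota_1$, define $f_2$ symmetrically from $\mathscr{B}$, and let $\mathscr{S}(\mathscr{A},\mathscr{B})$ be the class of $f_1\times f_2$. The remaining ambiguity---labelling which arc of the spine becomes which circle of $\vee^g S^1$ and choosing orientations---is precisely post-composition by a homeomorphism of $\vee^g S^1$, so $\mathscr{S}$ descends well-definedly to $B_{M,\mathcal{H}_1,\mathcal{H}_2}$.

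To construct the inverse, pick a representative $f=f_1\times f_2$ of $[f]\in B_{M,\mathcal{H}_1,\mathcal{H}_2}$ and homotope each $f_i$ to be transverse to the midpoint $w_i$ of every circle of its target, so that $f_i^{-1}(w_i)$ is a closed $1$-submanifold of $\Sigma_g$. Using two simplification moves realized by homotopies of $f_i$ into $\vee^g S^1$---eliminating a null-homotopic preimage component by pushing its image off $w_i$ through the disk it bounds, and folding a pair of oppositely co-oriented parallel preimages by pushing the annulus between them off $w_i$---reduce $f_1^{-1}(w_1)$ to a minimal collection. By the rank-$g$ Lagrangian kernel condition and $\pi_1$-surjectivity from Remark~\ref{properties}, this collection consists of exactly $g$ disjoint essential simple closed curves $\mathscr{A}$ representing a basis of $\ker(\iota_1)_*$; Proposition~\ref{factorthrough} confirms each is null-homotopic in $\mathcal{H}_1$. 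Define $\mathscr{B}$ analogously from $f_2$. The two constructions are inverses on the nose: $\mathscr{S}(\mathscr{A},\mathscr{B})$ yields an $f_1$ with $f_1^{-1}(w_1)=\mathscr{A}$ by design, and conversely the meridian disks bounded by the recovered $\mathscr{A}$ reproduce the same retraction $r_1$ up to the $\sim$-ambiguity.

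The main obstacle is proving that this simplification is canonical---that the isotopy class of the resulting diagram does not depend on the choice of representative in $[f]$, on the midpoints $w_i$, or on the order in which the moves are applied. I would handle this by a bifurcation argument: any two minimally simplified transverse representatives are connected by a one-parameter homotopy whose non-transverse locus is a codimension-one set across which $(f_1^t)^{-1}(w_1)$ undergoes elementary moves (birth/death of inessential circles, saddle moves between components), each absorbed by the simplification procedure; varying $w_i$ along its circle produces a homotopy with the same effect. The rank-$g$ Lagrangian condition is essential here to rule out distinct simplified outputs with different numbers of components. Once this canonicity is established, bijectivity of $\mathscr{S}$ follows immediately from the mutual-inverse relation above.
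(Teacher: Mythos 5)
Your overall strategy matches the paper's: both directions are constructed explicitly, with $\mathscr{S}^{-1}$ obtained by taking transverse preimages of midpoints of the circles and simplifying, and $\mathscr{S}$ obtained by collapsing a neighbourhood of the meridian disks. (A small slip: in your construction of $r_1$, the preimage of the \emph{wedge point} is the complement of the handle cores, not $\bigsqcup D_i$; you want $r_1^{-1}(w_i)=D_i$ where $w_i$ is the midpoint of the $i$th circle.) However, there are two genuine gaps.

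First, your simplification moves are too weak to guarantee that the minimal preimage $f_1^{-1}(w_i)$ is a single curve. You allow only (a) deletion of null-homotopic components and (b) cancellation of a pair of oppositely co-oriented \emph{parallel} (annulus-cobounding) components. But $f_1^{-1}(w_i)$ can contain two essential, non-parallel components $\gamma$ and $\delta$ of opposite co-orientation connected by an arc $\alpha$ in the complement whose intersection word is $\sigma_i\sigma_i^{-1}$; neither of your moves touches this configuration. The paper handles it with a distinct third move, supported on the pair of pants that is a neighbourhood of $\alpha\cup\gamma\cup\delta$, which \emph{merges} $\gamma$ and $\delta$ into the single curve $\gamma\cdot\alpha\cdot\delta\cdot\alpha^{-1}$ rather than deleting them. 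Even after all such merges, concluding that exactly $g$ curves remain is not a direct consequence of the rank-$g$ Lagrangian condition; the paper argues it by examining the homotopy inverse $h:\vee^g S^1\to\mathcal{H}_1$, showing each $h$-image circle must meet the preimage disks exactly once and deriving a contradiction if two components of $f_1^{-1}(w_i)$ survive. Your proposal asserts the conclusion (``this collection consists of exactly $g$ disjoint essential simple closed curves'') without supplying either the merge move or the homotopy-equivalence argument that replaces it.

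Second, the canonicity step is only sketched. You correctly identify it as the crux, but the bifurcation/one-parameter-family argument you outline is not carried out: the content of the claim is precisely that each elementary move across the non-transverse locus does not change the simplified output, and that is what would need to be proved. The paper's argument here is both shorter and more concrete: if two choices of homotoped representative produced diagrams differing in the $i$th curve, the homotopy sweeps out an immersed subsurface cobounding the two candidate curves, so they are homologous; but two non-separating simple closed curves on $\Sigma_g$ are homologous iff they are isotopic, giving a contradiction. Replacing your bifurcation sketch with this homology argument (or actually carrying out the bifurcation analysis) is needed to close the proof.
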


\begin{proof}
We define a bijection $\mathscr{S}^{-1}: B_{M, \mathcal{H}_1, \mathcal{H}_2} \rightarrow \{ \text{Heegaard diagrams for } M = \mathcal{H}_1 \cup \mathcal{H}_2\}$ as follows.

\paragraph{Defining $\mathscr{S}^{-1}$} Begin with  $[f] \in B_{M, \mathcal{H}_1, \mathcal{H}_2}$ and pick any representative $f \in [f]$. Let $f_1$ and $f_2$ be the two coordinates of $f$ (in other words, $f= f_1 \times f_2$). By Proposition~\ref{factorthrough}, one can think of the first coordinate $f_1$, as inclusion of $\Sigma_g$ into a handle body $\mathcal{H}_1$ as its boundary, followed by a homotopy equivalence $f_1': \mathcal{H}_1 \rightarrow \vee^g S^1$. Let $h: \vee^g S^1 \rightarrow \mathcal{H}_1$ be the other part of this homotopy equivalence. There are many Heegaard diagrams for the attachment of $\mathcal{H}_1$ to $\Sigma_g$ by this inclusion. However, $[f]$ determines one of them canonically using the following idea, and this canonical diagram is taken to be $\mathscr{S}^{-1}([f])$. 

Let $b \in \vee^g S^1$ be the bouquet point.  Pick distinct points $s_1, \ldots, s_g$ in $(\vee^g S^1)$, such that $s_i \neq b$, and there is one point on each circle.  Pick generators $\{ \sigma_i\}$ for $\pi_1(\vee^g S^1)$ such that the circle that contains $s_i$ is a representative for $\sigma_i$. The idea is that the points $\{s_i\}$ determine disks in $\mathcal{H}_1$, whose boundaries form the desired Heegaard diagram. The details of this process are laid out below.

Choose smooth structures on $\mathcal{H}_1$ and on the components of $\vee^g S^1 \setminus \{b\}$. Then modify $f_1$ by a homotopy so that it is smooth with respect to these structures (using Whitney's approximation theorem) and so that it is transverse to the points $s_i$ (using Thom transversality). Now $\{f_1^{-1}(s_i)\}$ is a collection of simple closed curves. Label each curve in $f_1^{-1}(s_i)$ by $\sigma_i$. This way, one can associate an intersection word in the letters $\{\sigma_i \}$ for curves on $\Sigma_g$.

Suppose there is an $i$ such that $f_1^{-1}(s_i)$ consists of at least two simple closed curves $\gamma$ and $\delta$. If there is an oriented simple curve arc $\alpha$ in $\Sigma_g \setminus \{f_1^{-1}(s_i)\}$ intersecting $\gamma$ and $\delta$ whose intersection word is $\sigma_i \sigma_i^{-1}$, then a neighbourhood of  $\alpha \cup \gamma \cup \delta$ is a pair of pants $\mathcal{P}$. One boundary component of the pants is homotopic to $\gamma$, one is homotopic to $\delta$, and the third is homotopic to a concatenation of the form $\gamma \cdot \alpha \cdot \delta \cdot \alpha^{-1}$ (with appropriate choices of orientation for $\gamma$ and $\delta$). In this case, $f_1$ can be modified by a homotopy supported on $\mathcal{P}$ so that $f_1^{-1}(\sigma_i)$ is a single curve tracing $\gamma \cdot \alpha \cdot \delta \cdot \alpha^{-1}$. Each such modification reduces the number of curves in $\{f_1^{-1}(s_i)\}$, and so after a point, there are no more arcs travelling between two curves in $\Sigma_g$ with intersection word $\sigma_i \sigma_i^{-1}$ for all $i$. Next, modify $f_1$ by another homotopy to remove any null homotopic curves in $\{f_1^{-1}(s_i)\}$ (this is possible because $\vee^g S^1$ is aspherical). Both these homotopies can be realized by modifying $f_1$ by a homotopy to that it is locally a Morse function, and then pushing parts of the surface above or below the height value $s_i$.

\begin{figure}[h]
\begin{tikzpicture}
\node (image) at (0,0) { \includegraphics[width=\textwidth]{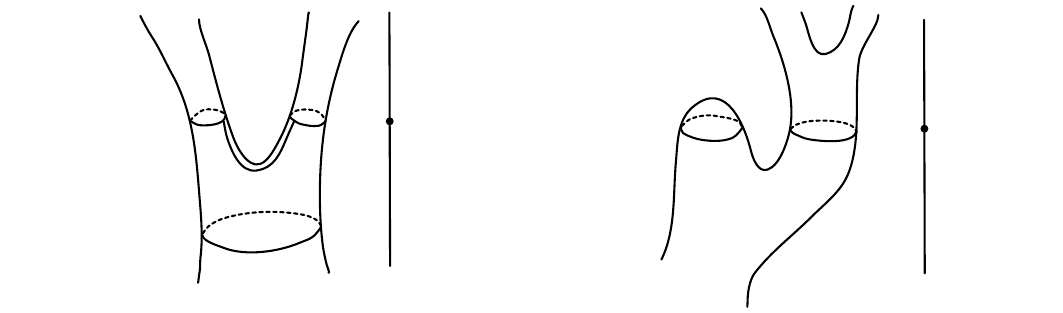}};
\draw[->] (2.8,0.5) -- (2.8,-0.5);
\draw[->] (-4.3, 0.3) -- (-4.3, 1.3);
\node at (-1.95,0.52) {$s_i$};
\node at (6.4,0.42) {$s_i$};
\end{tikzpicture}
\caption{The coordinates of the Stallings map locally as a Morse function.}
\label{fig: morse}
\end{figure}

After these modifications, $\{f_1^{-1}(s_i)\}$ is a set of disjoint essential simple closed curves, and there are no cancelling homotopic pairs. This means that these curves fall into at most $3g-3$ homotopy classes.

In the proof of Proposition~\ref{factorthrough}, we see that $f_1$ factors through a map $d: \mathcal{H}_1 \rightarrow \vee^g S^1$, and $d$ is part of a homotopy equivalence, together with a map $h: \vee^g S^1 \rightarrow \mathcal{H}_1$. This implies that   the curves $\{f_1^{-1}(s_i)\}$ are null homotopic when included into $\mathcal{H}_1$.  Therefore they bound (immersed) disks in $\mathcal{H}_1$. In fact, it is possible to find embedded disks that these curves bound. This will be proved in Proposition~\ref{propirred} and does not depend on any additional hypotheses.

Consider $h (\vee^g S^1)$. Each circle must be sent to a curve in $\mathcal{H}_1$ that intersects these disks in some pattern.  Modifying $h$ by a homotopy so that these intersections are transverse, ${d}_* \circ h_* (\sigma_i)$ can be read off as the intersection word of the $h$-image of the $i^{\text{th}}$ circle with the disks. Since $d$  and $h$ form a homotopy equivalence, ${d}_* \circ h_* (\sigma_i)= \sigma_i$. So, the $i^{\text{th}}$ circle must have intersection word that reduces to $\sigma_i$ in the free group $\pi_1(\vee^g S^1)$. However, since it is not possible for an arc to run between two distinct curves with intersection subword $\sigma_i \sigma_i^{-1}$, the only possibility is that $h$ can be modified by a homotopy so that each circle in  $h (\vee^g S^1)$ intersects the corresponding disk exactly once. This implies that $f^{-1}(s_i)$ contains a non-separating curve. If $f^{-1}(s_i)$ contained a second curve as well for some $i$, then there would be an arc $\gamma$ intersecting these two distinct curves with intersection word $\sigma_i \sigma_i^{-1}$. This is a contradiction. Therefore $f^{-1}(s_i)$ consists of a single non-separating curve.

Thus, there are exactly $g$ disjoint non-separating simple closed curves in $\{f_1^{-1}(s_i)\}$. Because these $g$ curves are null homotopic when included into $\mathcal{H}_1$, are homologically independent, and we know ${f_1}_*$ is $\pi_1$-surjective; from Proposition~\ref{factorthrough}, we know they must form a Heegaard diagram for the attachment of $\mathcal{H}_1$ to $\Sigma_g$.

To show that this half Heegaard diagram is unique, consider that if there was another half Heegaard diagram obtained in the way above, the curves of this diagram would be the preimages of $s_1, \ldots s_g$ by a map $\tilde f_1$ that is homotopic to $f_1$. Label the curves of this second diagram $\sigma_1, \ldots \sigma_g$. Since the two diagrams are not the same, there must be an $i$ such that the $\sigma_i$ two curves are not homotopic. Consider the homotopy between $f_1$ and $\tilde f_1$. The preimage of $s_i$ by this homotopy sweeps out a surface immersed in $\Sigma_g$, bounded by the two $\sigma_i$ curves. This implies that the two $\sigma_i$ curves are homologous. Two non-separating curves are homologous if and only if they are homotopic. This is a contradiction. Therefore the half Heegaard diagram obtained above is unique. 

In the same way, the second half of the Heegaard diagram is uniquely determined by $f_2$. We let this Heegaard diagram be $\mathscr{S}^{-1}([f])$.

\paragraph{Checking $\mathscr{S}^{-1}$ is surjective} Given a Heegaard diagram for $M = \mathcal{H}_1 \cup \mathcal{H}_2$, a map $f: \Sigma_g \rightarrow \vee^g S^1 \times \vee^g S^1$ that is sent to this Heegaard diagram by $\mathscr{S}^{-1}$ can be constructed. To construct the first coordinate $f_1$, attach disks and a $3$-ball to $\Sigma_g$ according to the blue Heegaard diagram curves to obtain $\mathcal{H}_1$. Then take a smooth structure on $\mathcal{H}_1$, and choose a tubular neighbourhood of the disks (each neighbourhood homeomorphic to $\mathbb{D} \times (-1,1)$ with $\mathbb{D} \times \{ 0 \}$ being the disk around which the neighbourhood is chosen).

Let $f_1$ send each disk to a distinct point in $\{s_1, s_2, \ldots s_g\}$. The tubular neighbourhood around each disk is mapped to the interval of $\vee^g S^1 \setminus \{b\}$ containing the corresponding $s_i$ by forgetting the first coordinate of $\mathbb{D} \times (-1,1)$, and applying a homeomorphism from $(-1,1)$ to the desired interval. The complement of the tubular neighbourhoods is mapped to the bouquet point $b$. The second coordinate, $f_2$ is constructed similarly.

Note that $\{f_j^{-1}(s_i)\}_{i, j}$ are the diagram curves and $\mathscr{S}^{-1}([f])$ is the Heegaard diagram we began with. the second coordinate $f_2$ is constructed similarly.

\paragraph{Checking $\mathscr{S}^{-1}$ is injective} Suppose $[f]$ and $[f']$ are in $B_{M, \mathcal{H}_1, \mathcal{H}_2}$ and that $\mathscr{S}^{-1}([f])=\mathscr{S}^{-1}([f'])$. Let $f \in [f]$ and $f' \in [f']$. Then after modifying $f$ and $f'$ by a homotopy if necessary, $\{(f')^{-1}(s_i)\}_i$ is a set of $g$ simple closed curves that homotopic to the $g$ curves $\{f^{-1}(s_i)\}_i$ for all $i$. After possibly composing $f$ with a homeomorphism $k: \vee^g S^1 \rightarrow \vee^g S^1$, we may assume that for each $i$, the curve $(f')^{-1}(s_i)$ is homotopic to $f^{-1}(s_i)$. Modifying $f$ further by homotopy, we may assume $f^{-1}(s_i) = (f')^{-1}(s_i)$ for all $i$.

Since $\vee^g S^1\setminus \cup_i \{s_i\}$ is contractible, $f$ can be modified by a homotopy that is supported on the complement of $\cup_i f^{-1}(s_i)$ so that the resulting map sends every point outside a tubular neighbourhood of $\cup_i f^{-1}(s_i)$ to the bouquet point $b$. The components of the tubular neighbourhood of $\cup_i f^{-1}(s_i)$ are each homeomorphic to $S^1 \times (-1,1)$, and the modified $f$ sends the tubular neighbourhood of $f^{-1}(s_i)$ to the maximal interval in $\vee^g S^1$ containing $s_i$. The map $f'$ can be modified by a homotopy to have the same properties.

The modified maps $f$ and $f'$ now agree on the complement of a tubular neighbourhood of $\cup_i f^{-1}(s_i)$. It remains to show that they agree up to homotopy on each component of the tubular neighbourhood of $\cup_i f^{-1}(s_i)$. This amounts to showing that the path $\{\pi\} \times (-1, 1)$ in the tubular neighbourhood of $f^{-1}(s_i)$ is mapped by $f$ and $f'$ to wrap around the $i^{\text{th}}$ circle in $\vee^g S^1$ in the same direction. However, this can be guaranteed by an appropriate choice of $k$ earlier. Therefore $[f]=[f']$. 
\end{proof}

\subsubsection*{Handle slides and Stallings maps}
Given any two Heegaard diagrams for $M=\mathcal{H}_1 \cup \mathcal{H}_2$, there is a finite sequence of handle slides that can be applied to one to obtain the other. Now that we know that The set of Stallings maps $B_{M, \mathcal{H}_1, \mathcal{H}_2}$ is in bijection with the set of Heegaard diagrams for $M=\mathcal{H}_1 \cup \mathcal{H}_2$, one might wonder about the relationship between these Stallings maps and how it relates to the notion of handle slide.

The group $Out(F_g) \times Out(F_g)$ acts on $\mathcal{B}_{M, \mathcal{H}_1, \mathcal{H}_2}$ by the action $(a, b) \cdot f_1 \times f_2=a\circ f_1 \times b\circ f_2$ where $a$ and $b$ denote elements in $Out(F_g)$ as well as maps $\vee^g S^1 \rightarrow \vee^g S^1$ inducing these elements on the fundamental group (these maps must be part of homotopy equivalences). Note that up to homotopy, there is only one choice of the maps $a$ and $b$. One choice is made for each element of $Out(F_g)$ so that this action is well defined.

Note that any map that is part of a homotopy equivalence of $\vee^g S^1$ to itself determines an element of $Out(F_g)$ by its action on the fundamental group. Recall that any two maps in $\mathcal{B}_{M, \mathcal{H}_1, \mathcal{H}_2}$ differ by a homotopy equivalence of each of the two coordinates of $\vee^g S^1 \times \vee^g S^1$. Therefore, given $f, f' \in \mathcal{B}_{M, \mathcal{H}_1, \mathcal{H}_2}$, there is an element $(a,b)$ of $Out(F_g) \times Out(F_g)$ such that $(a,b) \cdot f$ is homotopic to $f'$. This induces a transitive action on the quotient $B_{M, \mathcal{H}_1, \mathcal{H}_2}$. The action is well defined because given $a \in Out(F_g)$, for every homeomorphism $k:\vee^g S^1 \rightarrow \vee^g S^1$, there is another homeomorphism $j:\vee^g S^1 \rightarrow \vee^g S^1$ such that the map  $a\circ f_1$ is homotopic to $j \circ (a \circ k \circ f_1)$. Namely, $j$ can be chosen to be a homeomorphism in the homotopy class of $a \circ k^{-1} \circ a^{-1}$ (recall $a$ is part of a homotopy equivalence, so there must be a homeomorphism in this class). 

Since $B_{M, \mathcal{H}_1, \mathcal{H}_2}$ is in bijection with the set of Heegaard diagrams for $M= \mathcal{H}_1 \cup \mathcal{H}_2$ (Theorem~\ref{heegaarddiagram}), the action of $Out(F_g) \times Out(F_g)$ must translate into an action by handle slides. Let $K'$ be the normal closure of the elements of $Out(F_g)$ that permute the generators or replace a generator with its inverse. Then let $K_1 \leq Out(F_g) \times Out(F_g)$ be the subgroup $K' \times \mathds{1}$ and let $K_2 \leq Out(F_g) \times Out(F_g)$ be the subgroup $\mathds{1} \times K'$. Then $K:=K_1 \cap K_2$ is the kernel of the action of $Out(F_g) \times Out(F_g)$ on $B_{M, \mathcal{H}_1, \mathcal{H}_2}$.\\

\begin{prop}
\label{onetoone}
The group $Out(F_g) \times Out(F_g) / K$ is isomorphic to $H\times H$ where $H$ is the handle slide group acting on a handle body of genus g. The actions of $Out(F_g) \times Out(F_g) / K$ on $B_{M, \mathcal{H}_1, \mathcal{H}_2}$ and $H\times H$ on the set of Heegaard diagrams of a Heegaard splitting of genus $g$ are equivariant with respect to the bijection $\mathscr{S}$ via this isomorphism.
\end{prop}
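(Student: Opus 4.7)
The strategy is to decouple the two coordinates. Because the action of $Out(F_g) \times Out(F_g)$ on $\mathcal{B}_{M,\mathcal{H}_1,\mathcal{H}_2}$ acts independently on each factor of a Stallings map $f = f_1 \times f_2$, and because the construction of $\mathscr{S}$ in the proof of Theorem~\ref{heegaarddiagram} produces the two halves of a Heegaard diagram independently from $f_1$ and $f_2$, it suffices to prove the one-sided analogue: $Out(F_g)/K'$ is isomorphic to the handle slide group $H$ of a single handle body $\mathcal{H}$, equivariantly with respect to the one-coordinate restriction of $\mathscr{S}$. Taking products then recovers the proposition, with $K$ playing the role of $K' \times K'$ in $Out(F_g) \times Out(F_g)$.

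To construct an isomorphism $\Phi : Out(F_g)/K' \to H$, I would fix a reference Stallings coordinate $f_1^0$ together with its associated half-Heegaard diagram $d_0$. For $a \in Out(F_g)$, the composition $a \circ f_1^0$ is another valid Stallings coordinate for $\mathcal{H}$, whose associated half-Heegaard diagram $d_a$ lies in the handle slide orbit of $d_0$; define $\Phi(a) \in H$ to be the unique handle slide taking $d_0$ to $d_a$. Well-definedness on cosets of $K'$ and the homomorphism property are then verified by tracing Nielsen generators through $\mathscr{S}$, using the computation $(ab) \circ f_1^0 = a \circ (b \circ f_1^0)$ to reduce to single elementary moves.

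For the identification of the kernel I would invoke Nielsen's theorem that $Out(F_g)$ is generated by permutations of a basis, inversions $\sigma_i \mapsto \sigma_i^{-1}$, and elementary transvections $\sigma_i \mapsto \sigma_i \sigma_j$. Permutations and inversions preserve the unordered, unoriented curve system $\{(f_1^0)^{-1}(s_i)\}$ that $\mathscr{S}$ extracts, so their normal closure $K'$ is contained in $\ker \Phi$. A transvection, by contrast, replaces the preimage curve for $s_i$ by its concatenation with the preimage curve for $s_j$ via a connecting arc, precisely the pair-of-pants modification of Definition~\ref{handleslide}. Thus $\Phi$ sends transvections to elementary handle slides, and since these generate $H$, $\Phi$ is surjective. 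Reading the inverse correspondence off the pair-of-pants data of each elementary handle slide then gives injectivity on $Out(F_g)/K'$.

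Equivariance of $\mathscr{S}$ under the two actions is immediate from the construction: by definition, $\Phi(a)$ carries $d_0$ to the half-Heegaard diagram of $a \circ f_1^0$, which is exactly the equivariance statement for one coordinate, and the product over the two factors yields the full claim. The main obstacle will be verifying the injectivity of the induced map $Out(F_g)/K' \to H$, namely that the handle slide group imposes no relations beyond those already present in $Out(F_g)/K'$. The cleanest route I see is to construct the inverse homomorphism $H \to Out(F_g)/K'$ by assigning to each elementary handle slide its associated transvection using the pair-of-pants data of Definition~\ref{handleslide}, and then to check that this assignment respects the defining relations of $H$; a secondary technical check is that the description of $K$ in the proposition statement is read as $K' \times K'$ so that the product decomposition is consistent with the one-sided isomorphism above.
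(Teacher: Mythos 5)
Your strategy is essentially the paper's: reduce to a single coordinate, show $Out(F_g)/K' \cong H$ by tracking Nielsen generators, and take products. You are also right that $K$ must be read as $K' \times K'$ (the paper writes $K_1 \cap K_2$, which would be trivial; the surrounding argument makes clear the intended kernel is the product $K_1 K_2 = K' \times K'$). That observation is a real, if small, contribution.

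There are two places where you diverge or fall short of the paper's argument. First, you define $\Phi(a)$ as ``the unique handle slide taking $d_0$ to $d_a$.'' This is circular: uniqueness of an element of $H$ carrying one half-diagram to another is essentially the injectivity you are trying to establish, not something you can assume when defining $\Phi$. The paper avoids this by defining $\varphi$ directly on Nielsen generators of $Out(F_g)$ and then extending multiplicatively; the content is in showing that each generator acts on half-diagrams as either the identity or a single handle slide. Second, your claim that a transvection ``replaces the preimage curve for $s_i$ by its concatenation with the preimage curve for $s_j$ via a connecting arc'' is asserted rather than argued. The paper supplies the missing geometry: it realizes the transvection as a path $p(t)$ in outer space, takes preimages of edge midpoints of $Y(t)$ at each time, and shows the region containing the trivalent vertex of $Y(t)$ must be a pair of pants (a positive-genus region would violate property (1) of Stallings maps). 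The degeneration of this pair of pants at $t=0,1$ is exactly a handle slide. This intermediate-time analysis is what makes the connection to Definition~\ref{handleslide} actually go through, and you should include it or something equivalent. Your proposed inverse homomorphism $H \to Out(F_g)/K'$ for injectivity is a reasonable alternative route, but as you note, you have not checked that the assignment respects relations in $H$; the paper instead asserts $\ker\varphi = K'$ after showing $K' \subseteq \ker\varphi$, so neither version is fully detailed on this point.
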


\begin{proof}
The handle slide group $H$ is generated by individual handle slides. Beginning with a generator $(a,b)$ of $Out(F_g) \times Out(F_g)$, we would like to show that the map $\mathscr{S}([f]) \mapsto \mathscr{S}((a,b) \cdot [f])$ acts on the set of Heegaard diagrams in one of two ways on each set of $g$ curves. It either as the identity in $H$, or it acts as a single handle slide. We will see that this defines a homomorphism $Out(F_g) \times Out(F_g) \rightarrow H \times H$, and show that its kernel is $K$. As with most proofs in this paper, we prove this statement for one coordinate, and the second coordinate follows similarly. 

Let $j$ be an automorphism of the free group $F_g$. Then $j$ induces a homotopy equivalence of the bouquet of $g$ circles to itself. In the case where $j$ is an inner automorphism, the homotopy equivalence is homotopic to identity. If not, $j$ represents an element of $Out(F_g)$. The group $Out(F_g)$ is generated by automorphisms that send a single generator to its inverse, and act on the other generators as identity, as well as automorphisms that send a generator to the product of two generators and act on the other generators as identity. The first type fixes every element of $B_{M, \mathcal{H}_1, \mathcal{H}_2}$, and therefore the Heegaard diagram obtained is unchanged as well. If $j$ is of the second type, its action on $f_1$ can be realized gradually using a path $p(t)$ in the outer space $\mathcal{O}_g$ such that $p(0) \circ f_1 = f_1$ and $p(1) \circ f_1 = j \circ f_1$. Intuitively, $p(t)$ folds one circle around another as shown in the comic strip in Figure~\ref{fig: handleslide}.  

\begin{figure}[h]
\centering
\includegraphics[width= 6 in]{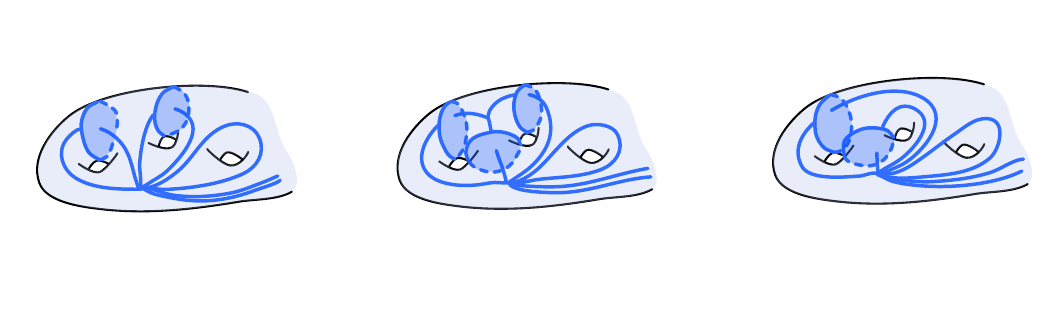}
\caption{A generator of $Out(F_g)$ is shown inducing a handle slide}
\label{fig: handleslide}
\end{figure}

At each moment in time $t$, Let $Y(t)$ be the target space of $p(t) \circ f_1$. Choose points $\{s_i \}$ one at the midpoint of each edge of $Y(t)$. Take the preimage of these points and simplify by a homotopy as in the proof of Proposition~\ref{heegaarddiagram}, so that there is a single curve in the preimage of each $s_i$. When $t\neq 0$ or $1$, one connected component $P$ of $Y(t) \setminus \cup_i \{s_i\}$ contains a single trivalent vertex. The preimage of $P$ must be a subset of $\Sigma_g$ bounded by three curves in the preimage of $\{s_i\}$. This surface cannot have positive genus because $p(t)$ is a homotopy equivalence. If the surface had positive genus, there would be two intersecting degree one homology classes that have trivial image by $p(t) \circ f_1$. This contradicts Property (1) of Stallings maps. Therefore, $P$ must be a pair of pants. When $t=0$ or $1$, the preimage of $\{s_i\}$ are Heegaard diagrams (as in Proposition~\ref{heegaarddiagram}) and one of the cuffs of the pair of pants $P$ disappears. This is exactly the combinatorial move known as a handle slide. 

This defines a homomorphism $\varphi: Out(F_g)  \rightarrow H$. A composition of generators of $Out(F_g)$ may act by permuting the generators. These compositions must have trivial image by $\varphi$ since the set of midpoints of edges is unchanged by a permutation of the copies of $S^1$. Thus the kernel of $\varphi$ is $K'$. Then $\varphi \times \varphi$ induces an isomorphism between the quotient $Out(F_g) \times Out(F_g) / K$ and $H \times H$, as desired.
\end{proof}

\subsection{A picture of Stallings maps}
We would like to study the nature of these maps. It is natural to ask whether they can be homeomorphisms (thinking of these as the nicest possible map between two-dimensional spaces). We will see in the final section about Lens spaces that there is one Heegaard splitting that can be encoded as a homeomorphism between tori (the genus one Heegard splitting of $S^3$). However, outside the genus one case, this is not possible, since the target space is no longer a manifold.

It is valuable to spend a moment understanding the topology of a cartesian product of two bouquets of $g$ circles. There are three types of points: manifold points (which have neighbourhoods homeomorphic to discs), book-like points (which have neighbourhoods homeomorphic to several disks intersecting in a curve), and the singular point (which is the product of the two wedge points). All together this forms a non-positively curved square complex with one vertex, $2g$ edges and $g^2$ squares. It is comprised of many one-square tori that intersect pairwise in an edge or a vertex. The link of the vertex is homeomorphic to the complete bipartite graph $K_{2g, 2g}$. Note that this graph is not planar, and so $(\vee^g S^1) \times (\vee^g S^1)$ cannot be embedded in $\mathbb{R}^3$.\\

\begin{figure}[h]
\centering
\includegraphics[width= 5 in]{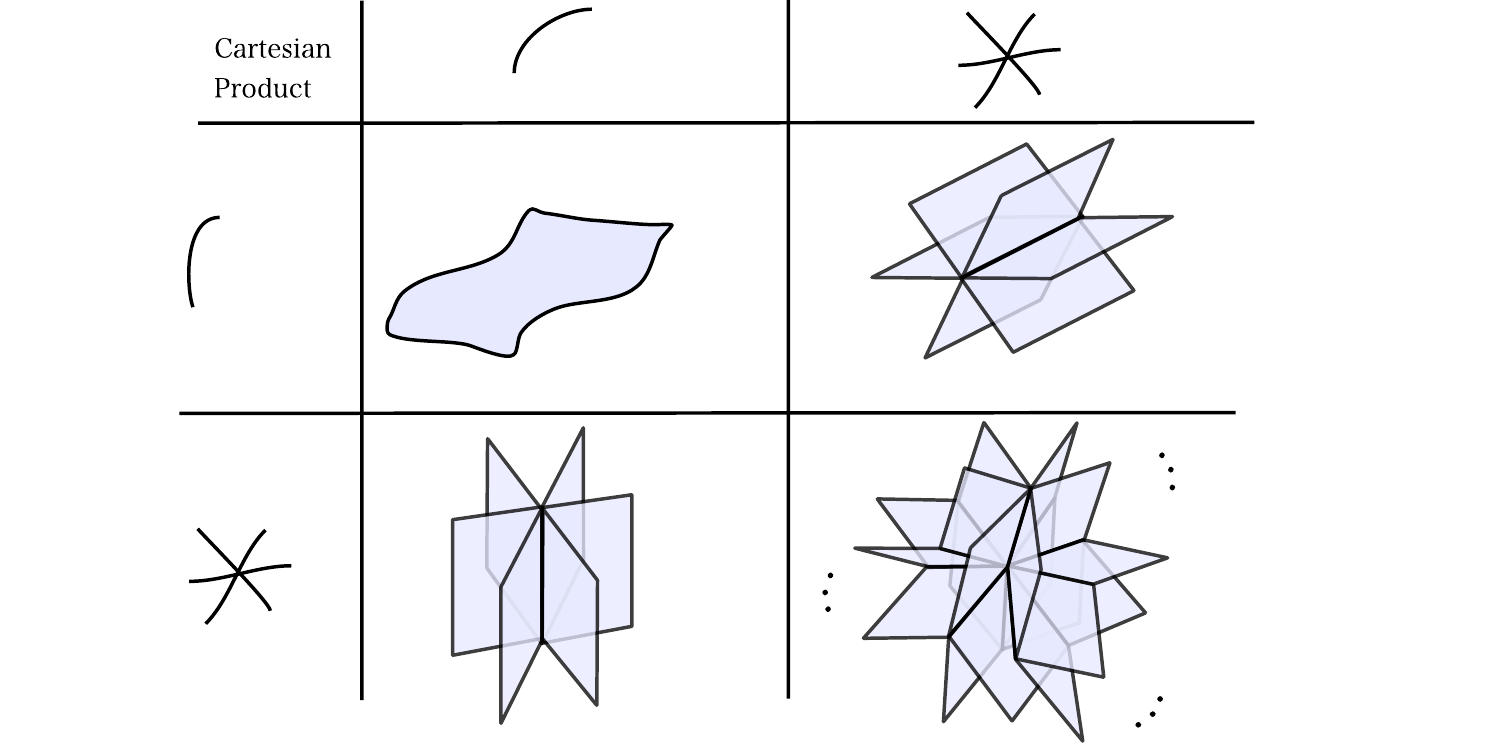}
\caption{topology of product neighbourhoods of $G_1 \times G_2$.  The bottom left neighbourhood cannot be embedded in $\mathbb{R}^3$, due to additional square ``tiles" that are not shown here. The ellipses indicate that these are missing.}
\label{fig: table}
\end{figure}

\section{Irreducibility, immersions and square complexes}
\label{mainthm}

We will now focus on the case where the Heegaard splitting is irreducible. Here, is it possible to pull our maps tight in some sense, removing folding and crumpling, and this gives an even more concrete picture of the maps, allowing understanding of the reducible case through decomposition by connect sum. Some set up is needed to formalize and prove this result. We begin with some definitions\\

\begin{defn}
Consider an embedded curve $\gamma$ on $\Sigma_g$. Take an open tubular neighbourhood $T$ of $\gamma$ of width small enough that $T$ is homeomorphic to an open cylinder. Consider a tubular neighbourhood $U$ of $\gamma$ that is contained in $T$. Removing the image of $\gamma$ from $U$ leaves two connected components. One of these components will be called $A_U$ and the other will be called $B_U$. Now we define an equivalence relation on the set of tubular neighbourhoods of $\gamma$ contained in $T$. Any two components of (possibly distinct) deleted tubular neighbourhoods are equivalent if and only if their intersection is non-empty. Notice that there are two equivalence classes and $A_U$ and $B_U$ are in different classes for any given $U$. Call these two classes the two \textit{sides of} $\gamma$.\\

A set \textit{intersects a side of } $\gamma$ if it intersects all the sets in this equivalence class. Consider an arbitrary constant curvature metric on $\Sigma_g$. Let $U_{\epsilon}$ be the tubular neighbourhood of $\gamma$ of width $\epsilon$ (with $\epsilon$ small enough so that $U_{\epsilon}$ is contained in $T$). Then  the number of \textit{connected components} of the intersection of a set $X$ with a side of $\gamma$ is defined as the limit  of the number of connected components of $X\cap A_{U_{\epsilon}}$ as $\epsilon \rightarrow 0$ (respectively $B_{U_{\epsilon}}$, depending on which side).
\end{defn}


A wave move  is a combinatorial change that one can make to a Heegaard diagram without changing the $3$-manifold it describes. The proof of this is originally written by Volodin, Kuznetzov and Fomenko in 1974 \cite{wavemove}, but is explained below.

\begin{defn}Given a half Heegaard diagram on the surface $\Sigma_g$, a \textit{wave move} consists of the following steps.
	\begin{enumerate}
	\item Pick a simple arc $\gamma$ that begins and ends on a diagram curve $\delta$, does not intersect any other diagram curves, and intersects only one side of $\delta$ in two components;
	\item Thicken the union of $\delta$ and $\gamma$ in $\Sigma_g$ to obtain a pair of pants (see Figure~\ref{wavemove});
	\item Replace $\delta$ with another cuff of this pair of pants (if it creates a new Heegaard diagram).
	\end{enumerate}
\end{defn}

\begin{figure}
\centering \includegraphics[width=3in]{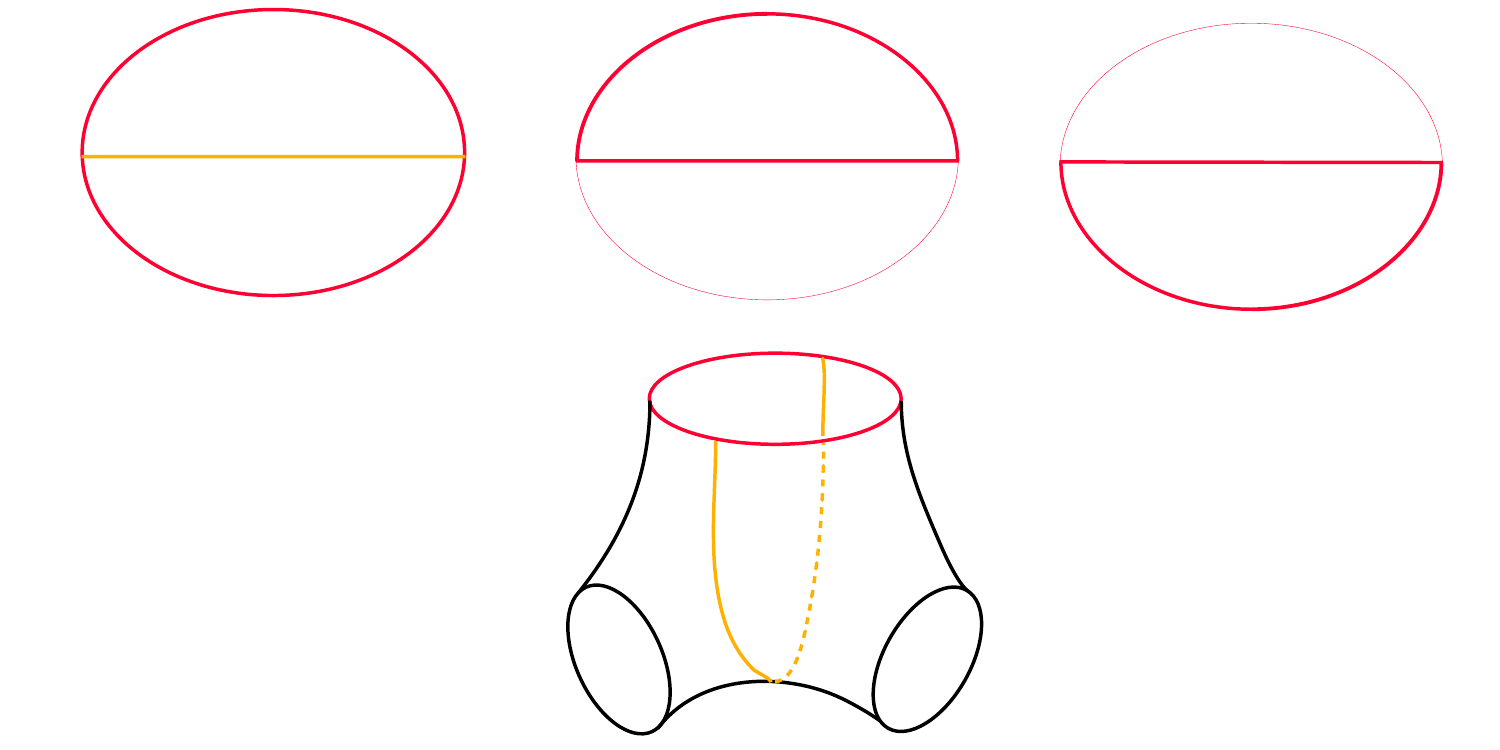}
\caption{The diagram curve $\delta$ is shown in red and the arc $\gamma$ is shown in yellow. Representatives of the free homotopy classes $[\alpha]$ and $[\beta]$, shown in two ways: (i) as a union of segments of $\delta$ and $\gamma$, and (ii) as the cuffs of a pair of pants constructed by $\delta$ and $\gamma$.}
\label{wavemove}
\end{figure}

Call the two curves forming the other cuffs of the pair of pants $\alpha$ and $\beta$. We shall see in the proof of the next theorem that either replacing $\delta$ with $\alpha$ or $\delta$ with $\beta$ always results in another Heegaard diagram. Further, we shall see that the new Heegaard diagram describes the same Heegaard decomposition. 

\begin{thm}[Volodin, Kuznetzov, Fomenko \cite{wavemove}] \label{thm:wavemove}
Applying a wave move to a half Heegaard diagram does not change the attachment of a handle body described.
\end{thm}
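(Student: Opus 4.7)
The plan is to show directly that the new diagram curves produced by the wave move bound meridian disks in the original handle body $\mathcal{H}$ attached to $\Sigma_g$; together with the unchanged disks for $c_2, \ldots, c_g$, they will form a complete meridian system realizing the same attachment, so the same handle body is described.

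First, since $\delta$ is a diagram curve, it bounds an embedded meridian disk $D_\delta$ in $\mathcal{H}$. Gluing $D_\delta$ to the pair of pants $P$ along their shared boundary $\delta$ produces an embedded annulus $A \subset \mathcal{H}$ with $\partial A = \alpha \cup \beta \subset \partial\mathcal{H}$. The key homological observation is that $[\alpha]$ lies in the Lagrangian $L_\mathcal{H} = \mathrm{span}([\delta], [c_2], \ldots, [c_g]) \subset H_1(\Sigma_g)$ associated to $\mathcal{H}$: the curve $\alpha$ is disjoint from $\delta$ (they are distinct cuffs of $P$) and from each $c_i$ (since $\gamma$ avoids the $c_i$ by the wave move hypothesis), so $[\alpha]$ annihilates $L_\mathcal{H}$ under the intersection pairing; since $L_\mathcal{H}$ is Lagrangian and hence self-annihilating, $[\alpha] \in L_\mathcal{H}$, and symmetrically $[\beta] \in L_\mathcal{H}$.

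The crux is to upgrade this null-homologousness in $\mathcal{H}$ to null-homotopy, which is what is needed to invoke Dehn's Lemma and produce an embedded disk bounded by $\alpha$. For this step I would use the extra structure provided by $A$: the annulus gives a free homotopy in $\mathcal{H}$ between appropriate orientations of $\alpha$ and $\beta$, while the pair-of-pants relation $[\delta][\alpha][\beta] = 1$ in $\pi_1(P)$ descends to $[\alpha][\beta] = 1$ in $\pi_1(\mathcal{H})$ after capping $\delta$ (as $[\delta]=1$ in $\pi_1(\mathcal{H})$). Combined with the hypothesis that the wave move creates a valid Heegaard diagram, so that $\{[\alpha], [c_2], \ldots, [c_g]\}$ is a basis of $L_\mathcal{H}$, one deduces that the chosen replacement curve (whichever of $\alpha$ or $\beta$ gives an independent basis) is null-homotopic in $\mathcal{H}$.

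Once $\alpha$ is null-homotopic, Dehn's Lemma produces an embedded meridian disk $D_\alpha \subset \mathcal{H}$, and an innermost-disk argument arranges $D_\alpha$ to be disjoint from $D_2, \ldots, D_g$. The new collection $\{D_\alpha, D_2, \ldots, D_g\}$ is then a complete meridian system for $\mathcal{H}$: disjoint, embedded, with boundaries forming a basis of $L_\mathcal{H}$ and hence cutting $\mathcal{H}$ into a 3-ball. Since a handle body attachment to $\Sigma_g$ is determined by any complete meridian system, the new Heegaard diagram describes the same attachment of $\mathcal{H}$, proving the theorem. The main obstacle is the null-homologous-to-null-homotopic upgrade: for an arbitrary simple closed curve on the boundary of a handle body this implication fails in general, so the argument must leverage the specific topology of $P$ and $A$ rather than relying on homology alone.
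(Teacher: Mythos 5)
Your strategy (show the replacement curve bounds a meridian disk, hence the new curve system is a complete meridian system for the same handle body) is a genuinely different route from the paper's proof, which instead realizes the wave move as a finite sequence of handle slides by embedding the pair of pants in the planar surface obtained by cutting along the diagram curves. Your approach is in principle more direct and closer in spirit to Proposition~\ref{propirred}, and if completed it would give a cleaner account of why the wave move is safe.

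However, there is a genuine gap at exactly the step you flag as the crux. You derive $[\alpha]\in L_{\mathcal{H}}$ and $[\alpha][\beta]=1$ in $\pi_1(\mathcal{H})$, and then assert that, combined with the fact that $\{\alpha,c_2,\dots,c_g\}$ is a Lagrangian basis, one ``deduces'' that $\alpha$ is null-homotopic in $\mathcal{H}$. That deduction does not follow: the two facts are compatible with $\iota_*[\alpha]=w$ and $\iota_*[\beta]=w^{-1}$ for any $w$ in the commutator subgroup of $\pi_1(\mathcal{H})\cong F_g$, so null-homology plus the annulus relation cannot by themselves force $w=1$. The extra information you need (and have, but do not use) is that $\alpha$ is disjoint from \emph{all} of the original diagram curves $\delta, c_2,\dots,c_g$, not merely homologically orthogonal to them. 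Cut $\mathcal{H}$ along the embedded meridian disks $D_\delta, D_2,\dots, D_g$; the result is a $3$-ball $B$, and $\Sigma_g$ is cut to the planar surface $\partial B$ minus $2g$ disks. Since $\alpha$ misses every diagram curve, it survives the cut and lies on $\partial B\cong S^2$, where it bounds an embedded disk in $B$; reassembling gives an embedded disk $D_\alpha\subset\mathcal{H}$ disjoint from $D_2,\dots,D_g$. This both establishes null-homotopy and produces the disk you need without any separate invocation of Dehn's lemma or an innermost-disk argument. With that repair, the remainder of your argument (that $\{D_\alpha,D_2,\dots,D_g\}$ is a complete meridian system because the curves are independent, disjoint, and cut $\Sigma_g$ to a planar surface, so the cut-open $\mathcal{H}$ is a $3$-ball and the attachment is determined) goes through and does prove the theorem. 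One should also note, as the paper does, that it may be only one of $\alpha,\beta$ that yields an independent system; the argument above produces disks for both, so whichever gives a valid diagram works.
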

\begin{proof}
To show that one of these swaps must give a new Heegaard diagram for the same Heegaard decomposition, two cases must be explored combinatorially.\\

\paragraph{Case 1: $\alpha$ or $\beta$ is isotopic to a diagram curve.} If $\alpha$ is isotopic to another curve in the original half Heegaard diagram, then $\beta$ must not be, since $\delta$ must be homologically independent from the other curves. Then, by a single handle slide, $\delta$ can be replaced by $\beta$ to give a new Heegaard diagram, describing the same attachment of a handlebody to $\Sigma_g$. Similarly, if $\beta$ is isotopic to a curve in the original half Heegaard diagram, then $\delta$ can be replaced with $\alpha$.\\

\paragraph{Case 2: Neither $\alpha$ nor $\beta$ is isotopic to a diagram curve} Assume neither $\alpha$ nor $\beta$ is isotopic to a curve in the original half Heegaard diagram. In this case, cut $\Sigma_g$ along all the diagram curves, and label them to recall the identifications needed to recover $\Sigma_g$. This is a sphere with $2g$ discs removed. Note that, by the way it is defined, $\gamma$ does not intersect any diagram curves. Therefore, the graph given by the union of $\delta$ and the $\gamma$ can be embedded in the sphere with $2g$ discs removed. By thickening this graph in this space, the pair of pants bounded by $\delta$, $\alpha$, and $\beta$ are embedded as well. 

\begin{figure}
\centering \includegraphics[width=3in]{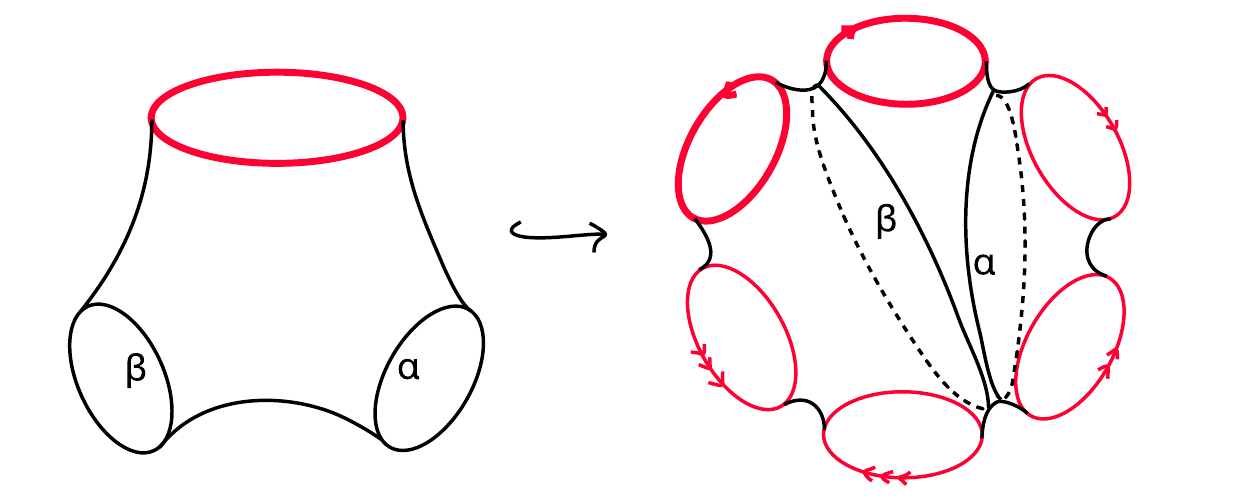}
\caption{The embedding of the pair of pants introduced in Figure \ref{wavemove}.}
\end{figure}

Either $\alpha$ or $\beta$ must separate one copy of $\delta$ from the other. Without loss of generality let it be $\beta$. The aim is to show that $\delta$ can be replaced by $\beta$ to give a new Heegaard diagram associated to the same attachment of a handle body.\\

This will be shown by constructing a sequence of embedded pairs of pants that dictate a sequence of handle slides that ultimately make the desired replacement.\\

Cut along $\beta$. This produces two connected components. Pick either one of these to work with. Call this space $S$. We will embed our pairs of pants that dictate handle slides in this space. Identify as many of the diagram curves as possible to each other. Each identification creates a handle with a diagram curve wrapped around it.

\begin{figure}
\centering \includegraphics[width=2in]{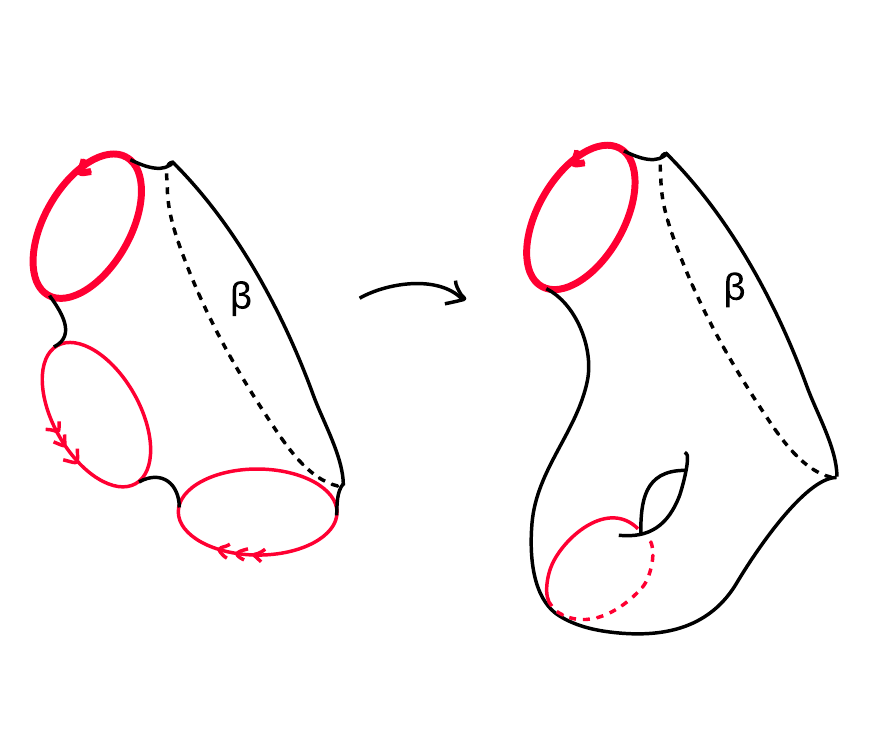}
\caption{$S$ is shown before and after identifications.}
\end{figure}

In general $S$ is now a surface of genus $p$ with $n+2$ boundary components ($n$ boundary components besides $\delta$ and $\beta$). Further, $S$ has a diagram curve around each handle, and isotopic to each of the boundary components except for $\beta$. Using the classification of surfaces, the surface $S$ and the diagram curves are homeomorphic to the surface and curves shown in Figure~\ref{pantsdecomp}. Consider the following pair of pants decomposition by curves: $\delta,\gamma_1 \ldots, \gamma_{2p}, \ldots, \gamma_{2p+n} = \beta$.

\begin{figure}
\centering \includegraphics[width=4in]{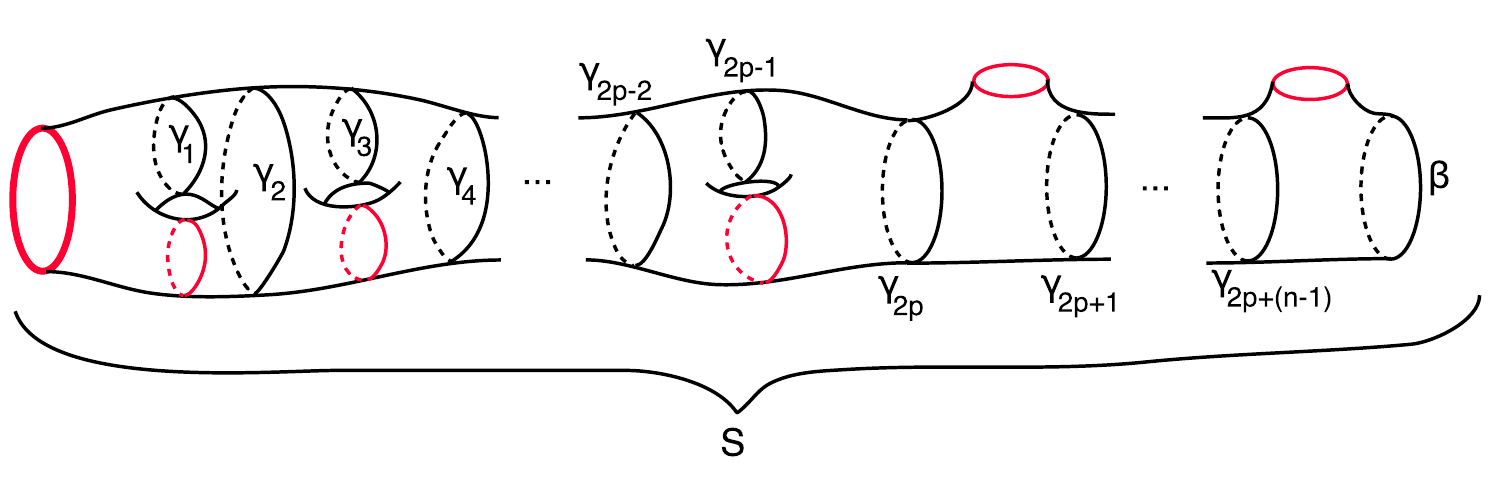}
\caption{A pair of pants decomposition of $S$.}
\label{pantsdecomp}
\end{figure}

By a handle slide, the problematic curve can be replaced by $\gamma_1$ to give a new Heegaard diagram associated to the same Heegaard decomposition. A second handle slide allows $\gamma_1$ to be replaced by $\gamma_2$. Next $\gamma_2$ is replaced with $\gamma_3$, and so on until $\gamma_{2p+(n-1)}$ is replaced with $\beta$, as desired.

\end{proof}

The author rediscovered the wave move for herself during her graduate studies, and found that it is incredibly useful! It is the key to almost every proof in this paper.

\subsection{Irreducibility}

Recall that  Heegaard splitting is \emph{irreducible} if no essential simple closed curve on the splitting surface bounds a disk in $\mathcal{H}_1$ and in $\mathcal{H}_2$.

\begin{prop} \label{propirred}
Let $\mathcal{H}$ be a handle body and let $\Sigma$ be the surface that is its boundary. Let $\gamma$ be an essential simple closed curve on $\Sigma$, that is nullhomotopic in $\mathcal{H}$. Then $\gamma$ bounds an embedded disk in $\mathcal{H}$.
\end{prop}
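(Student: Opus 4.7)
My plan is to induct on the minimum intersection number $n := \min_\iota |\iota(\gamma) \cap \partial \mathcal{D}|$ over ambient isotopies $\iota$ of $\gamma$ on $\Sigma$, where $\mathcal{D} = D_1 \sqcup \cdots \sqcup D_g$ is a fixed complete system of meridian disks for $\mathcal{H}$. Cutting $\mathcal{H}$ along $\mathcal{D}$ yields a $3$-ball $B$ whose boundary $2$-sphere carries two parallel copies of each $\partial D_i$.

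For the base case $n = 0$, I isotope $\gamma$ to be disjoint from $\partial \mathcal{D}$; it then lies on $\partial B \cong S^2$, and the Jordan--Sch\"onflies theorem produces a disk on $\partial B$ bounded by $\gamma$. Pushing this disk slightly into the interior of $B$ yields the required embedded disk in $\mathcal{H}$.

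For the inductive step $n > 0$, I produce a boundary compression that strictly reduces $n$. Starting from any singular null-homotopy $f: D^2 \to \mathcal{H}$ of $\gamma$ made transverse to $\mathcal{D}$, the preimage $f^{-1}(\mathcal{D})$ is a compact $1$-manifold in $D^2$ consisting of arcs and circles. An innermost-circle surgery---replacing the image of an innermost preimage circle with a subdisk of the corresponding $D_j$, then pushing off---eliminates all circle components of $f^{-1}(\mathcal{D})$ without altering $f|_{\partial D^2} = \gamma$. An outermost arc $\alpha$ in the remaining preimage cuts off a subdisk $E \subset D^2$ whose image lies entirely in $B$ with boundary a simple closed curve on $\partial B \cong S^2$ made up of one arc of $\gamma$ and one arc of some $\partial D_i$. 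This curve bounds an embedded disk in $B$ (either of the two Jordan hemispheres of $\partial B$, pushed into the interior), giving an embedded boundary-compression disk $\Delta$ for $\gamma$ in $\mathcal{H}$; surgering $\gamma$ along $\Delta$ isotopes $\gamma$ across $D_i$ and strictly decreases $|\gamma \cap \partial \mathcal{D}|$, contradicting minimality of $n$.

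The main obstacle is that a boundary compression can in principle split $\gamma$ into two simple closed curves rather than isotope it to a single curve of strictly lower intersection number, so one must be careful to keep the induction running on an essential curve that is still null-homotopic in $\mathcal{H}$. To handle this, I would argue that at least one of the two resulting components remains essential on $\Sigma$ and null-homotopic in $\mathcal{H}$ (using that $[\gamma] = 0$ in $\pi_1(\mathcal{H})$ is distributed among the two new components), apply the inductive hypothesis to that component to get an embedded disk for it, and then tube the two pieces back together through a regular neighbourhood of $\Delta$ to upgrade the result into an embedded disk bounded by $\gamma$ itself.
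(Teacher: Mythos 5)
Your strategy is genuinely different from the paper's. The paper keeps $\gamma$ fixed and modifies the meridian system $\mathcal{D}$: since the cyclic intersection word of $\gamma$ with $\partial\mathcal{D}$ reduces to the identity, arcs of $\gamma$ running between consecutive intersections with a single $\partial D_i$ dictate wave moves (Theorem~\ref{thm:wavemove}) that replace $\partial D_i$ by a new meridian while describing the same handle body. Iterating eliminates all intersections, and then cutting along the new disk system puts $\gamma$ on a sphere. You instead keep $\mathcal{D}$ fixed and try to simplify $\gamma$ itself by a null-homotopy/outermost-arc argument. Both are recognizable routes to this classical fact, but yours, as written, has real gaps in the inductive step.

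The central problem is with the disk you call $\Delta$. The outermost preimage arc $\alpha$ satisfies $f(\alpha)\subset D_i$, i.e.\ it lies in the \emph{interior} of the meridian disk, not on $\partial D_i\subset\Sigma$. So the Jordan disk you push into $B$ has boundary $a\cup f(\alpha)$ with one arc on $\Sigma$ and the other in the interior of $\mathcal{H}$; this is not a boundary-compression disk for $\gamma$, and ``surgering $\gamma$ along $\Delta$'' does not produce a curve on $\Sigma$, let alone an ambient isotopy of $\gamma$ in $\Sigma$ reducing $n$. (As a minor point, $f(\alpha)$ also need not be embedded, so $a\cup f(\alpha)$ need not be a Jordan curve; but this is easily fixed since only the endpoints of $\alpha$ matter.) To obtain an honest compression on $\Sigma$ you need to replace $f(\alpha)$ by an arc $c'$ of $\partial D_i$ joining the same two points $p,q$ of $\gamma\cap\partial D_i$. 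The outermost-arc setup does guarantee that $p$ and $q$ lie on the \emph{same} copy $\partial D_i^+$ after cutting (i.e.\ they have opposite algebraic signs), so $a\cup c'$ is a simple closed curve on $\partial B$; but the new arc $c'\subset\partial D_i$ may meet $\gamma$ in further points of $\gamma\cap\partial D_i$ in its interior. You have not addressed this, and without $c'\cap\gamma=\partial c'$ the band surgery you rely on does not produce two disjoint embedded simple closed curves. Handling it requires another innermost argument along $\partial D_i$, or a judicious choice among outermost arcs, and this is exactly the kind of combinatorial bookkeeping the paper's wave-move formulation packages cleanly. Finally, the last paragraph's repair (apply the inductive hypothesis to one component and ``tube back together'') is the right idea but is underspecified: when both $\gamma_1$ and $\gamma_2$ are essential you need disks for each, made disjoint from one another and from the band by innermost-circle surgery, before you can band-sum them to a disk for $\gamma$; and you should also note that both components are automatically null-homotopic (a short $\pi_1$ computation using the compression disk), which you assert but do not justify. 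In short, the overall plan is viable and arguably more elementary than invoking wave moves, but the inductive step needs substantially more care than you give before it compiles into a proof.
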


\begin{proof}
Choose half a Heegaard diagram, that describes the attachment of $\mathcal{H}$ to $\Sigma$. Since $\gamma$ is nullhomotopic in $\mathcal{H}$, the cyclic word produced by its intersection with the diagram curves must reduce to identity in the free group on $g$ generators. Thus, the arcs of $\gamma$ that run between diagram curves dictate a number of wave moves that change the diagram so that it does not intersect $\gamma$ at all, while still describing the attachment of $\mathcal{H}$ to $\Sigma$.

Cut $\mathcal{H}$ along the embedded disks bounded by diagram curves. Note that $\gamma$ is not cut since it does not intersect any diagram curves. The remaining space is homotopic to a closed $3$-ball with $2g$ disks removed on the boundary. $\gamma$ is an embedded curve on the boundary of this $3$-ball. It is clear that $\gamma$ bounds an embedded disk in this space. Identifying pairs of disks to recover $\mathcal{H}$, we see that $\gamma$ bounds an embedded disk in $\mathcal{H}$.
\end{proof}

\begin{rmk}
Proposition \ref{propirred} shows that irreducibility of a Heegaard splitting is equivalent to the following: no essential simple closed curve in $\Sigma$ is nullhomotopic in both $\mathcal{H}_1$ and $\mathcal{H}_2$. This property is much more easily verified than the definition, and will be used extensively in what follows.
\end{rmk}

\begin{defn}
Let $S$ be a closed orientable surface and let $G_1$, $G_2$ be graphs. A Stallings map $f: S \longrightarrow G_1 \times G_2$ is said to be \textit{irreducible} if it encodes an irreducible Heegaard decomposition.
\end{defn}

\begin{rmk}
Proposition \ref{propirred} implies that if $f$ is an irreducible map, then $\text{ker}f_*$ has no non-trivial element with a representative that is a simple closed curve.
\end{rmk}

\begin{notation}
The traditional picture of a Heegaard diagram is of red and blue curves.  Following this tradition, we will refer to the two sets of curves as though one set is red and the other is blue.
\end{notation}

\begin{prop} \label{filling}
Let $M=\mathcal{H}_1 \cup \mathcal{H}_2$ be an irreducible Heegaard decomposition. The complement of the curves of any associated Heegaard diagram in $\Sigma_g$ is homeomorphic to a finite union of discs. In other words, the Heegaard diagram fills the surface.
\end{prop}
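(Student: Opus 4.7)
The plan is to argue by contradiction: if the diagram does not fill, we produce an essential simple closed curve on $\Sigma_g$ that is disjoint from every diagram curve, and then show such a curve bounds disks in both $\mathcal{H}_1$ and $\mathcal{H}_2$, contradicting irreducibility.

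Concretely, suppose some connected component $C$ of $\Sigma_g \setminus (\mathscr{A} \cup \mathscr{B})$ is not a disk. Then $C$ is a compact subsurface (after closing up) with boundary consisting of arcs of diagram curves, and since it is not a disk it carries at least one simple closed curve $\gamma \subset C$ that is essential in $C$ (either a handle core if $C$ has positive genus, or a curve separating two boundary components if $C$ is planar with more than one boundary component). The first main step is to upgrade essentiality: $\gamma$ is actually essential in $\Sigma_g$. Suppose not; then $\gamma$ bounds a disk $D \subset \Sigma_g$. Examining the intersection of $D$ with any diagram curve $c$: since $\gamma \cap c = \emptyset$, the intersection $D \cap c$ is a disjoint union of simple closed curves in $D$, each therefore null-homotopic in $\Sigma_g$. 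Because $c$ is connected, either $c \subset D$ or $c \cap D = \emptyset$; the former would force $c$ to be null-homotopic in $\Sigma_g$, contradicting that diagram curves are essential (they represent a basis of a Lagrangian in homology). Hence $D$ is disjoint from $\mathscr{A} \cup \mathscr{B}$, so $D$ lies in the same complementary region as $\gamma$, namely $D \subseteq C$. But this contradicts $\gamma$ being essential in $C$.

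Now use $\gamma$ to derive a contradiction to irreducibility. Since $\gamma$ is disjoint from $\mathscr{A}$, it lies on the boundary of the handlebody $\mathcal{H}_1$ in the complement of the $g$ disks bounded by $\mathscr{A}$-curves. Cutting $\mathcal{H}_1$ along these disks yields a $3$-ball $B_1$, and $\gamma$ sits on its sphere boundary as a simple closed curve; thus $\gamma$ bounds an embedded disk in $B_1 \subset \mathcal{H}_1$. By the identical argument using the $\mathscr{B}$-curves, $\gamma$ bounds an embedded disk in $\mathcal{H}_2$. This is exactly the failure of irreducibility (in the form recorded by Proposition~\ref{propirred} and the remark following it), contradicting the hypothesis.

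The main obstacle is the essentiality upgrade in the middle step, since one must rule out the possibility that $\gamma$ is null-homotopic in the ambient surface via a disk $D$ that escapes $C$. The argument above handles this by observing that any diagram curve trapped inside such a $D$ would itself be null-homotopic in $\Sigma_g$, which is forbidden by the defining property of Heegaard diagram curves; the rest of the proof is then a direct application of Proposition~\ref{propirred} applied twice (once to each handlebody).
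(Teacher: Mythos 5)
Your proof is correct and follows essentially the same strategy as the paper's: assume a complementary region is not a disk, extract an essential simple closed curve disjoint from all diagram curves, and show it witnesses reducibility. The paper leaves the upgrade from essential-in-$C$ to essential-in-$\Sigma_g$ implicit and dispatches the contradiction in one line via the characterization of irreducibility coming from Proposition~\ref{propirred}, whereas you spell out both steps (re-deriving the embedded disks directly), but the underlying argument is the same.
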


\begin{proof}

Consider the handle body $\mathcal{H}_i$ and the $g$ curves of a Heegaard diagram that encode its attachment to $\Sigma_g$. Notice that the complement of these curves in $\Sigma_g$ is sent to a null-homotopic subset of $\mathcal{H}_i$ by $\iota_i$. 


Now suppose the proposition were false. That is, suppose that there was a Heegaard diagram for $M=\mathcal{H}_1 \cup \mathcal{H}_2$ that has a complement with a connected component that is not a disk. This implies that there is an essential simple closed curve in the complement of the diagram. Since it does not intersect any blue curves, it must have null-homotopic image by $\iota_1$ and since it does not intersect any red curves, it must have null-homotopic images by $\iota_2$. This contradicts the irreducibility of of $M=\mathcal{H}_1 \cup \mathcal{H}_2$.
\end{proof}

\begin{notation}
The discs described above can be thought of as \emph{polygons} with alternating red and blue edges made from segments of Heegaard diagram curves.
\end{notation}

\subsection{Tautness}

\begin{defn}
 \label{taut}
A Heegaard diagram is \textit{taut} if every connected component of the complement intersects a side of a diagram curve in at most one connected component.
\end{defn}

A region of $\Sigma_g$ is shown in Figure~\ref{problemandnoproblem}, with two complementary polygonal components of two different Heegaard diagrams. The first picture is not taut. The open polygon intersects the same side of a curve in two connected components. The second picture is taut. Here, while the open polygon does intersect the sides of a curve in two connected components, they are the opposite sides.

\begin{figure}
\centering \includegraphics[width=5in]{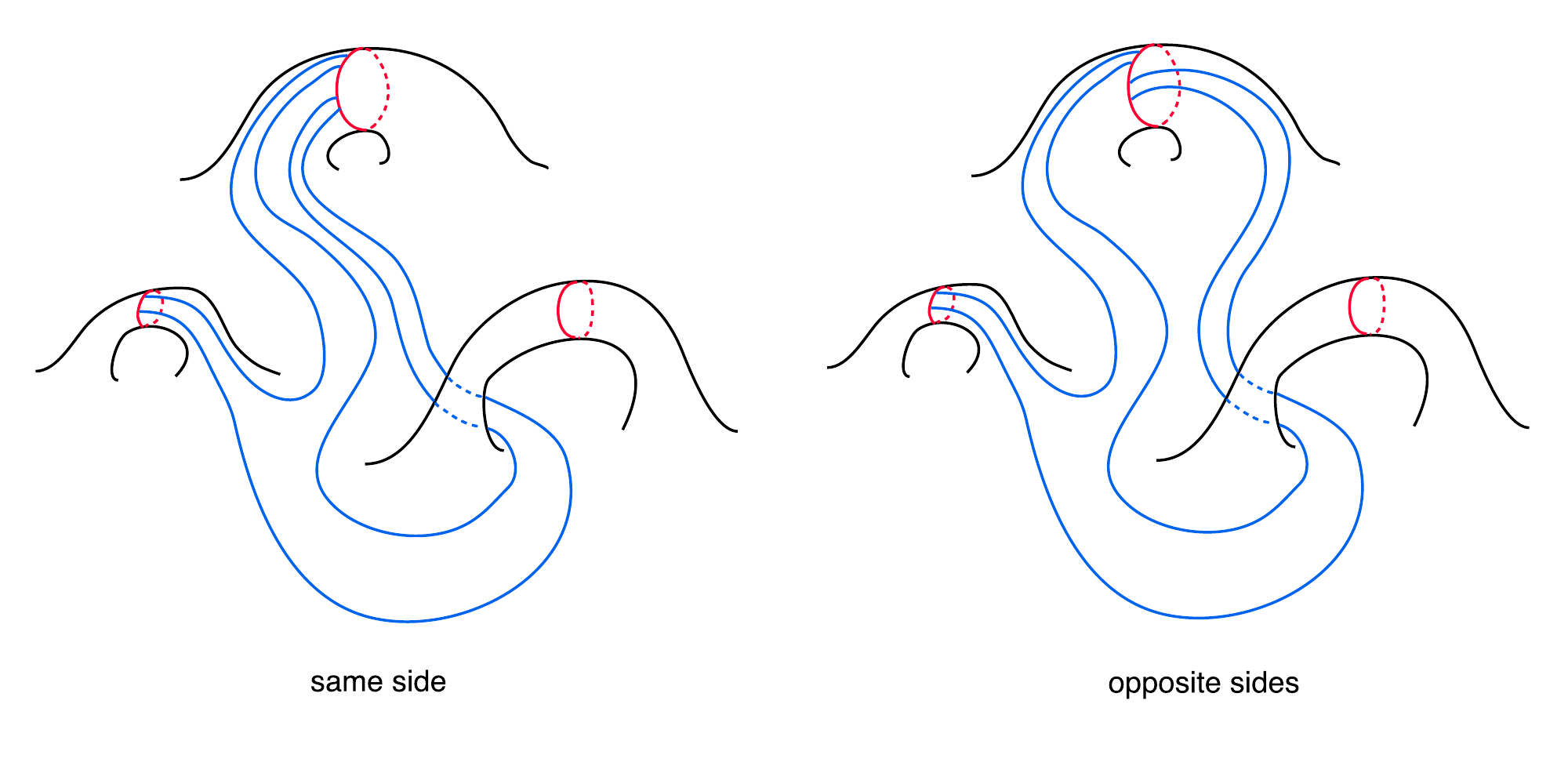}
\caption{Two regions of Heegaard diagrams. The left is not taut, while the right is taut.}
\label{problemandnoproblem}
\end{figure}

\begin{thm}
\label{thm:existtaut}
Every Heegaard decomposition has at least one associated taut Heegaard diagram.
\end{thm}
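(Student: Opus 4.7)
The plan is induction on the total intersection number $N(D) := |\mathscr{A} \cap \mathscr{B}|$ of the red and blue curves of a Heegaard diagram $D$ for $M = \mathcal{H}_1 \cup \mathcal{H}_2$. If $D$ is already taut, we are done. Otherwise, the goal is to exhibit a wave move producing a new Heegaard diagram $D'$ for the same decomposition (which stays valid by Theorem~\ref{thm:wavemove}) with $N(D') < N(D)$. Since $N$ is a non-negative integer, iterating produces a taut diagram after finitely many steps.

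Suppose $D$ is not taut, witnessed by a diagram curve $\delta$ (without loss of generality $\delta \in \mathscr{B}$), a complementary region $R$ of $\mathscr{A} \cup \mathscr{B}$, and a side $S$ of $\delta$ such that $R \cap A_{U_\epsilon}$ has at least two connected components for all small $\epsilon > 0$. The $\epsilon$-strip $A_{U_\epsilon}$ is an annular neighborhood of $\delta$ on side $S$; the curves of $\mathscr{A}$ meet $\delta$ transversally, so for small $\epsilon$ they enter $A_{U_\epsilon}$ as short radial arcs, one at each point of $\delta \cap \mathscr{A}$, cutting $A_{U_\epsilon}$ into open rectangles, each contained in a single complementary region. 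Two components of $R \cap A_{U_\epsilon}$ therefore correspond to two such rectangles inside $R$ separated along the annulus by rectangles in other regions. In particular $\delta$ meets $\mathscr{A}$ in at least two points, and one may choose $p, q \in \delta$ lying in distinct arcs of $\delta \setminus \mathscr{A}$ and in distinct components of $R \cap A_{U_\epsilon}$. Any simple arc $\gamma \subset R$ from $p$ to $q$ is then a legitimate wave-move arc: it begins and ends on $\delta$, meets no other diagram curves (since $\gamma \subset R$), and meets side $S$ of $\delta$ in exactly the two small components near its endpoints while avoiding the other side.

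Perform the wave move along $\gamma$: let $\delta_1, \delta_2$ be the two subarcs of $\delta$ cut off by $p, q$, and let $\alpha := \delta_1 \cup \gamma$ and $\beta := \delta_2 \cup \gamma$ be the two candidate replacement curves. Because $p, q$ lie in distinct arcs of $\delta \setminus \mathscr{A}$, each of $\delta_1, \delta_2$ contains at least one point of $\delta \cap \mathscr{A}$; since $\gamma$ avoids $\mathscr{A}$ entirely,
\[
|\alpha \cap \mathscr{A}| \le |\delta_1 \cap \mathscr{A}|, \qquad |\beta \cap \mathscr{A}| \le |\delta_2 \cap \mathscr{A}|,
\]
both strictly positive and summing to $|\delta \cap \mathscr{A}|$. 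By Theorem~\ref{thm:wavemove}, at least one of the replacements $\delta \mapsto \alpha$ or $\delta \mapsto \beta$ yields a Heegaard diagram $D'$ for the same decomposition, and in either case $N(D') < N(D)$, because the new curve has strictly fewer intersections with $\mathscr{A}$ than $\delta$ did while all other intersection points of $D$ are unaffected.

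The delicate point of this plan is the structural claim in the second paragraph: that non-tautness for $\delta$ forces $|\delta \cap \mathscr{A}| \ge 2$ and that the two witnessing components of $R \cap A_{U_\epsilon}$ can be arranged to lie in different arcs of $\delta \setminus \mathscr{A}$. This requires unpacking the $A_{U_\epsilon}$ definition of ``side'' into the explicit local picture of the $\epsilon$-strip described above. Once that is in place, the asymmetric way the wave move partitions the intersections of $\delta$ with the other color gives a strictly monotone complexity, and the induction terminates in a taut Heegaard diagram for $M = \mathcal{H}_1 \cup \mathcal{H}_2$.
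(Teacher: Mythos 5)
Your proof is correct and follows the same strategy as the paper's: repeatedly apply wave moves along arcs witnessing non-tautness and observe that each such move strictly decreases the total intersection number $|\mathscr{A}\cap\mathscr{B}|$, so the process terminates at a taut diagram. The only difference is that where the paper justifies the strict decrease by pointing to a figure, you spell out the combinatorial reason: since $p,q$ lie in distinct arcs of $\delta\setminus\mathscr{A}$, each of $\delta_1,\delta_2$ carries at least one point of $\delta\cap\mathscr{A}$, so whichever cuff replaces $\delta$ has strictly fewer intersections.
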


\begin{proof}
The proof is constructive, beginning with any Heegaard diagram and modifying it until a taut Heegaard diagram is achieved.\\

If this Heegaard diagram is already taut, we are finished.\\

If not, there exists a connected component of the complement that intersects some diagram curve twice on the same side. Such incidences will be called \textit{problems}.
Notice that multiple problems may occur on a single complementary component, or along a single curve.\\

Given a problem, without loss of generality, let the problematic curve be red and call it $\delta$. Consider a curve segment $\gamma$ with one end point on each of the two segments shared by the problematic curve and problematic component, and with interior inside the problematic component. Such a segment is shown in Figure~\ref{problem} for the problem first seen in Figure~\ref{problemandnoproblem}.

\begin{figure}
\centering \includegraphics[width=2in]{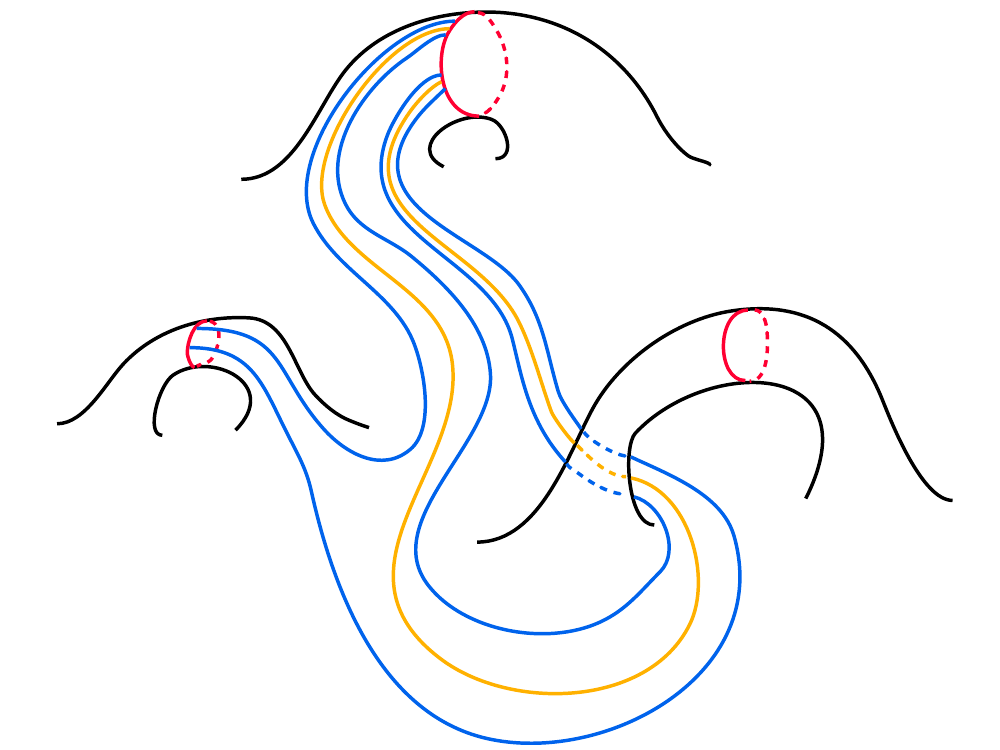}
\caption{An orange segment that intersects the same side of a diagram curve in two connected components.}
\label{problem}
\end{figure}

Thicken the union of $\delta$ and $\gamma$ in $\Sigma_g$ to obtain a pair a pants. One cuff of the pair of pants is isotopic to $\delta$, call the other two cuffs $\alpha$ and $\beta$. Replace the problematic curve $\delta$ with $\alpha$ or $\beta$ to produce a new Heegaard diagram. This Heegaard diagram describes the same Heegaard splitting by Theorem~\ref{thm:wavemove}, since we have just applied a wave move.\\

This swap strictly reduces the number of intersections between red and blue curves.\\

Figure~\ref{reduceintersection} includes the minimal number of blue arcs possible so that the complementary region containing $\gamma$ intersects a side of $\delta$ in two connected components . The elimination of an intersection point is seen in either case.

\begin{figure}
\centering 
\begin{tikzpicture}
\node (image) at (0,0) {\includegraphics[width=4in]{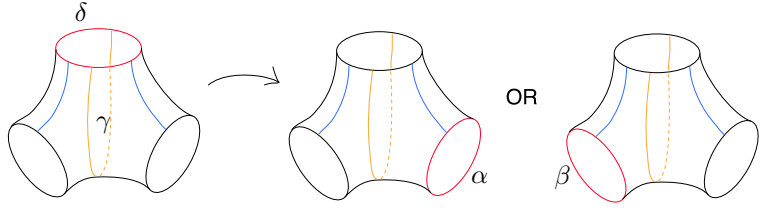}};
\node at (-4,1.5) {$\delta$};
\node at (1.3,-0.7) {$\alpha$};
\node at (2.4,-0.7) {$\beta$};
\node at (-3.7,0) {$\gamma$};
\end{tikzpicture}
\caption{Applying wave move to the red half of the Heegaard diagram, seen in the pair of pants.}
\label{reduceintersection}
\end{figure}

\begin{figure}
\centering \includegraphics[width=4in]{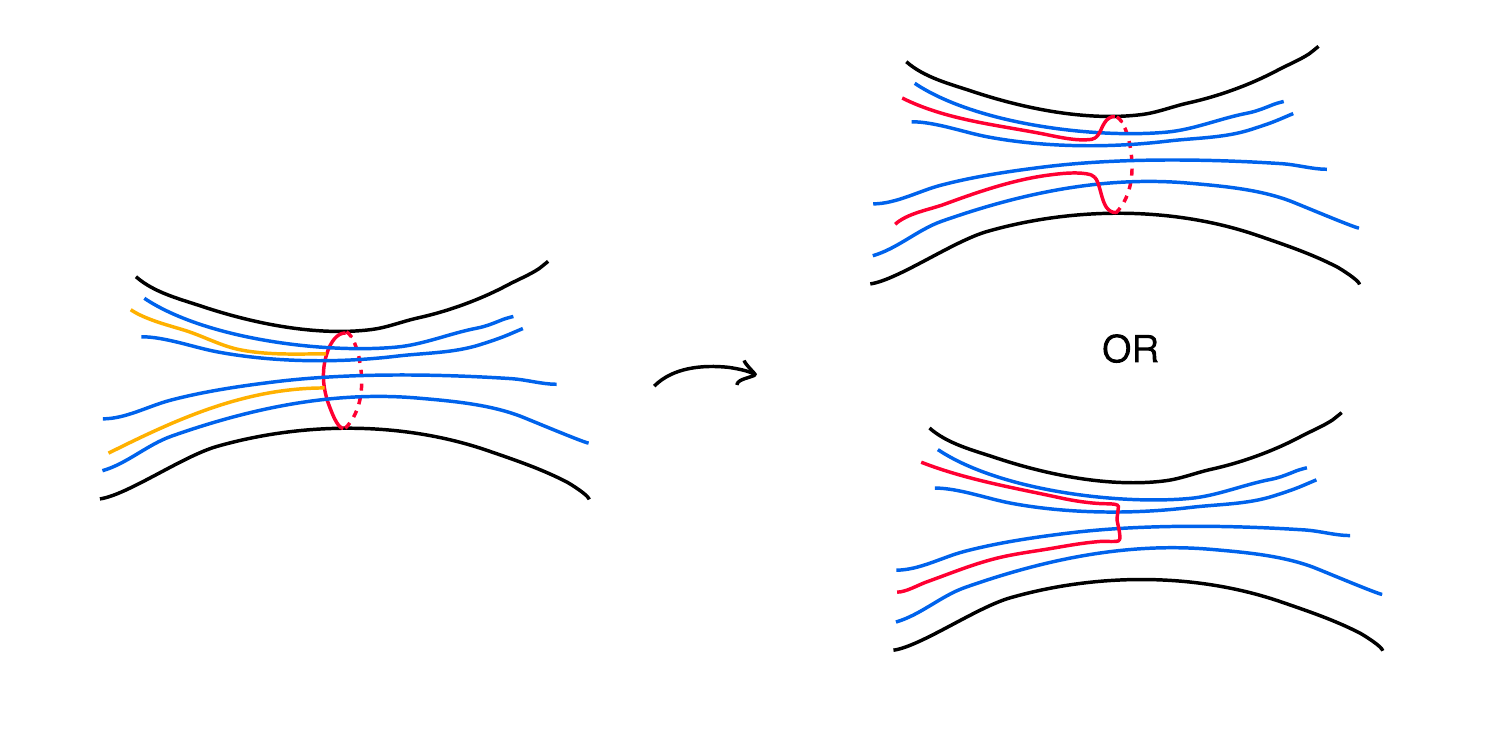}
\caption{A neighbourhood of the problematic curve $\delta$ before and after the wave move, this time with a few more blue arcs.}
\end{figure}

Repeat this process as long as there is a problem, each time reducing the intersections between red and blue curves by at least one. The number of intersections is finite and bounded below by zero. Therefore, this process must end with a taut configuration.
\end{proof}
 
 Applying Theorem~\ref{thm:existtaut} to irreducible Heegaard splittings, where every Heegaard diagram is filling (Proposition~\ref{filling}) gives the following result immediately.
 
 \begin{thm}
 \label{irredtautfilling}
 Every irreducible Heegaard splitting has a taut and filling Heegaard diagram.
 \end{thm}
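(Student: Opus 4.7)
The plan is to combine the two preceding results directly. Given an irreducible Heegaard splitting $M=\mathcal{H}_1 \cup \mathcal{H}_2$, I first apply Theorem~\ref{thm:existtaut} to obtain an associated Heegaard diagram $(\mathscr{A},\mathscr{B})$ on $\Sigma_g$ which is taut. This diagram still encodes the same Heegaard splitting $M=\mathcal{H}_1 \cup \mathcal{H}_2$, since the wave moves used in the construction of the taut diagram preserve the underlying Heegaard decomposition (by Theorem~\ref{thm:wavemove}).

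Next, I invoke Proposition~\ref{filling}: because the splitting is irreducible, \emph{every} Heegaard diagram encoding $M=\mathcal{H}_1 \cup \mathcal{H}_2$ has complement in $\Sigma_g$ homeomorphic to a finite disjoint union of disks. In particular, the taut diagram $(\mathscr{A},\mathscr{B})$ produced in the first step is automatically filling. Hence it is taut and filling simultaneously.

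There is essentially no obstacle, since the two ingredients are already in place: Theorem~\ref{thm:existtaut} provides tautness without assumptions on the splitting, and Proposition~\ref{filling} provides filling for all diagrams of an irreducible splitting. The only thing to verify is that these two properties can be required of the same diagram, and this is immediate from the logical order: once irreducibility is assumed, filling is a property of every diagram in the orbit of Heegaard diagrams for the splitting, so the taut one produced by Theorem~\ref{thm:existtaut} inherits filling for free.

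If a slightly more self-contained proof were desired, one could alternately observe during the proof of Theorem~\ref{thm:existtaut} that the wave move used to remove a problem strictly decreases the number of red-blue intersections but does not create any non-disk complementary regions: any new complementary region arising after a wave move is obtained by cutting one old complementary component along a subarc and regluing across the swapped cuff, and an essential simple closed curve in a post-move complementary region would persist as an essential curve avoiding the diagram, contradicting irreducibility via the reformulation in Proposition~\ref{propirred}. This shows directly that the termination of the taut-making procedure yields a diagram whose complement consists of disks, giving the result without separately invoking Proposition~\ref{filling}.
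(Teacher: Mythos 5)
Your proof is correct and matches the paper's argument exactly: apply Theorem~\ref{thm:existtaut} to obtain a taut diagram, then note that Proposition~\ref{filling} guarantees that every Heegaard diagram of an irreducible splitting is filling, so the taut diagram inherits this property automatically. The alternative self-contained sketch you offer at the end is a reasonable bonus but unnecessary, since the two cited results already compose cleanly as you and the paper both observe.
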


\subsection{Immersions}
The goal of this section is to prove the following theorem.

\begin{thm} \label{locinj}
There is a locally injective map $f: \Sigma_g \longrightarrow (\vee^g S^1) \times (\vee^g S^1)$ encoding every irreducible Heegaard decomposition of genus greater than zero.
\end{thm}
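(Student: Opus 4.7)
The plan is to construct a locally injective Stallings map directly from a simplified Heegaard diagram by giving $\Sigma_g$ a dual square complex structure. First, by Theorem~\ref{irredtautfilling}, I would choose a taut and filling Heegaard diagram encoding the irreducible Heegaard splitting; after isotoping diagram curves within their free homotopy classes (preserving the encoded Heegaard splitting) to eliminate any bigons between a red and a blue curve, every complementary polygonal region has $2k\geq 4$ sides, alternating red and blue.

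Next, I would construct a square complex structure on $\Sigma_g$ dual to this diagram: place one 0-cell in the interior of each polygon, one 1-cell across each arc of a diagram curve (connecting centers of adjacent polygons), and one square 2-cell centered at each red--blue intersection, with its four corners at the centers of the four surrounding polygons. Filling of the diagram ensures these cells tile $\Sigma_g$. Give the target $(\vee^g S^1)\times(\vee^g S^1)$ its natural square complex structure (one vertex, $2g$ loops, $g^2$ torus 2-cells), and define $f$ as the cellular map sending every 0-cell to the singular vertex, each 1-cell across a segment of blue curve $i$ (resp.\ red curve $j$) to the $i$-th loop of $G_1$ (resp.\ $j$-th loop of $G_2$), and each 2-cell centered at an intersection of blue $i$ with red $j$ homeomorphically onto the corresponding target torus. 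By construction $f^{-1}(s_i)$ recovers the $i$-th blue (resp.\ red) curve for $f_1$ (resp.\ $f_2$), so Proposition~\ref{factorthrough} and Theorem~\ref{heegaarddiagram} identify $f$ as a Stallings map for the given splitting.

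Local injectivity would then be checked cell by cell. On the interior of each 2-cell, $f$ is a homeomorphism onto an open target 2-cell. On the interior of a 1-cell, the two adjacent dual squares either map to different target 2-cells (where injectivity is immediate) or to the same one, in which case orientability of $\Sigma_g$ guarantees that the two adjacent source regions land on opposite sides of the shared target edge inside that torus, so no folding occurs. The essential verification is at each 0-cell (polygon center): its link is a combinatorial cycle of length $2k$ whose vertex labels record which side of which diagram curve each outgoing dual edge exits, and whose edges are labelled by the surrounding red--blue intersections. Tautness ensures no vertex label is repeated around the cycle, so this cycle maps injectively into the link $K_{2g,2g}$ of the target vertex, while the no-bigon condition provides $2k\geq 4$, which is necessary because $K_{2g,2g}$ has girth $4$.

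The main obstacle I anticipate is the orientation argument at 1-cells where two adjacent dual squares land on the same target torus: one must genuinely use the global orientability of $\Sigma_g$ to define consistent sides of each diagram curve and match them with the two sides of the corresponding target loop. A secondary technical subtlety is justifying that bigon removal between red and blue curves can always be carried out while preserving both the tautness and filling guaranteed by Theorem~\ref{irredtautfilling}, so that the dual square complex construction still applies unchanged.
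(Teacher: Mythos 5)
Your construction is exactly the paper's: take a taut, filling, bigon-free Heegaard diagram, dualize to a square complex structure on $\Sigma_g$, and define $f$ cellularly; and your link argument at $0$-cells is the same use of tautness and the no-bigon hypothesis that appears in Lemma~\ref{cornersinj}. The appeal to Proposition~\ref{factorthrough} to conclude that $f$ encodes the given splitting is a legitimate shortcut past the paper's explicit verification (Claim~\ref{homeo}), since your $f$ by construction has the original diagram curves as fibres.

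The one misstep is the point you flag as your ``main obstacle.'' Orientability of $\Sigma_g$ does \emph{not} by itself prevent folding along a $1$-cell: an orientation-preserving combinatorial map can still identify the two squares adjacent to an edge with the same side of the target square. What actually settles this --- and what the paper invokes from \cite{haglundwise} --- is that a combinatorial map of square complexes is locally injective as soon as it is locally injective at every $0$-cell; a fold across a $1$-cell would force a repetition in the link cycle at either of that edge's endpoints. So the $1$-cell case is already subsumed by the vertex check you gave, and no separate orientation argument is needed. Your second worry (that bigon removal preserves tautness) is real but routine: removing a red--blue bigon is an isotopy that strictly decreases intersection number, and if tautness is lost one can reapply the wave-move cleanup of Theorem~\ref{thm:existtaut}; the combined process terminates because intersection number is bounded below.
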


This theorem is a corollary of the following lemma, which we will prove in detail.
\begin{lemma}
\label{tautfilling}
The Stallings map corresponding to a taut and filling Heegaard diagram without bigons is locally injective.
\end{lemma}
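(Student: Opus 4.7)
The plan is to realize the Stallings map as an explicit combinatorial map of subdivided square complexes, and then check local injectivity by verifying that the induced map on links is an embedding at every vertex.

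First I would subdivide. The Heegaard diagram equips $\Sigma_g$ with a CW structure whose vertices are intersections, edges are arcs between consecutive intersections, and $2$-cells are the complementary polygons; by the filling and no-bigons hypotheses each such polygon is a $2k$-gon with $k \geq 2$. Refine this by adding a centroid to each polygon and a midpoint to each arc, joining each centroid to the midpoints on its boundary, so that each $2k$-gon is partitioned into $2k$ quadrilaterals. Subdivide the target $(\vee^g S^1) \times (\vee^g S^1)$ analogously by introducing the midpoints $s_i$ of the loops of each factor and the centers $(s_i, s_j)$ of the $g^2$ squares, yielding $4g^2$ sub-squares. The Stallings map is then the combinatorial map sending intersection vertices to the centers $(s_i,s_j)$, arc midpoints to the edge midpoints $(s_i,b)$ or $(b,s_j)$, polygon centroids to $(b,b)$, and each quadrilateral at a corner where blue curve $i$ meets red curve $j$ to the appropriate sub-square of the $(i,j)$-square.

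At an intersection vertex both links are $4$-cycles and the induced map is a homeomorphism, so local injectivity is immediate. At the centroid of a $2k$-gon $P$, the link in $\Sigma_g$ is a $2k$-cycle whose vertices are the midpoints of $P$'s boundary arcs, and it maps into the link $K_{2g,2g}$ of $(b,b)$; each blue (respectively red) midpoint is sent to a half-edge of the corresponding loop. Tautness forces this assignment to be injective on vertices, since two boundary arcs of $P$ on the same side of the same diagram curve would produce two midpoints mapping to the same half-edge. Because $K_{2g,2g}$ has at most one edge between any pair of bipartite vertices, injectivity on vertices yields injectivity on edges, and $k \geq 2$ guarantees the $2k$-cycle has length at least the girth of $K_{2g,2g}$.

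The main work is at the midpoint $m$ of an arc on blue curve $i$ with endpoints $u, w$, which are intersections with red curves $j_u, j_w$. The link of $m$ in $\Sigma_g$ is a $4$-cycle with two vertices pointing along the arc toward $u$ and $w$ and two pointing transversely into the two flanking polygons; the polygon-direction vertices map to the two distinct half-edges of the $i$-th loop at $(s_i,b)$ because the flanking polygons lie on opposite sides of the arc. The intersection-direction vertices map to half-midlines of the $(i,j_u)$- and $(i,j_w)$-squares, which are automatically distinct when $j_u \neq j_w$. In the delicate case $j_u = j_w = j$, I would argue that if the blue arc exited red $j$ on the same side at both endpoints, then the polygon adjacent to the blue arc on that side would have corners at both $u$ and $w$ touching that same side of red $j$; tautness would force these two contact regions to be joined inside the polygon by a single arc of its boundary running along red $j$ from $u$ to $w$, so the polygon would be bounded exactly by the blue arc and this red arc, i.e., a bigon, contradicting the hypotheses. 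Hence the two half-midlines are distinct and the link map embeds. I expect this bigon reduction from the tautness of the flanking polygon to be the main obstacle in the proof; once it is in hand, the verification at the other vertex types is essentially bookkeeping about the subdivided cell structures.
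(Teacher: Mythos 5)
Your strategy coincides with the paper's: construct an explicit combinatorial map of square complexes and verify local injectivity by checking vertices. The paper works with the dual cell decomposition directly (one $2$-cell per intersection point, one vertex per complementary polygon) mapped to the target's natural square structure; since a combinatorial map of square complexes is an immersion iff it is locally injective at vertices, the paper has only one type of vertex to check, and the tautness-plus-no-bigon argument there is short. You instead subdivide both sides barycentrically, giving three vertex types (intersections, arc midpoints, polygon centroids) and hence two extra case analyses. The centroid case matches the paper's argument, and the arc-midpoint case is the delicate one you flag; the dichotomy you use ($r_u = r_w$ gives a bigon, $r_u \ne r_w$ on the same side violates tautness) is the right move, although the phrase ``exited red $j$ on the same side at both endpoints'' should be pinned down so that it exactly corresponds to the configuration in which the two arc-direction link vertices at $m$ would collide under the map. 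As written, your construction is also under-determined: ``the appropriate sub-square'' must be specified consistently, which the paper handles by defining the map on each sub-cell via straight-line homotopies from the $1$-skeleton.

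The larger gap is that you prove only that your constructed map is locally injective; you never establish that it is the Stallings map corresponding to the given Heegaard diagram, i.e., that it actually describes $M = \mathcal{H}_1 \cup \mathcal{H}_2$ and lies in the equivalence class $\mathscr{S}(\text{diagram})$. This is roughly half of the paper's proof of the lemma: it includes $\Sigma_g$ into a space $\mathcal{H}$ built from solid cylinders along the $f_1$-fibres, shows in Claim~\ref{homeo} that $\mathcal{H}$ is homeomorphic to $\mathcal{H}_1$ compatibly with the inclusions, and concludes $f_1 \simeq d \circ \iota_1$ for a deformation retraction $d$. Without some version of this, you have produced a locally injective map to $(\vee^g S^1)\times(\vee^g S^1)$ but have not tied it to the Heegaard splitting — which is what the lemma asserts and what Theorem~\ref{locinj} needs downstream. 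A shorter route than the paper's would be to verify directly that $f_1^{-1}(s_i)$ and $f_2^{-1}(s_j)$ are the given diagram curves and then invoke the construction of $\mathscr{S}^{-1}$ from Theorem~\ref{heegaarddiagram}, but that check still has to be made.
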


Taking Lemma~\ref{tautfilling} to be true, all Heegaard diagrams of irreducible Heegaard splittings are filling by Proposition~\ref{filling}, and we have seen in Theorem~\ref{irredtautfilling} that there is a taut diagram for each irreducible decomposition. Therefore Theorem~\ref{locinj} follows.

To prove Lemma~\ref{tautfilling}, consider a taut, filling Heegaard diagram without bigons for some Heegaard decomposition $M=\mathcal{H}_{1} \cup \mathcal{H}_{2}$. Let the curves describing the attachment of $\mathcal{H}_{1}$ to $\Sigma_g$ be blue. Similarly, let the curves describing the attachment of $\mathcal{H}_{2}$ to $\Sigma_g$ be red.\\

This diagram will be used to give a particular map $f: \Sigma_g \longrightarrow (\vee^g S^1) \times (\vee^g S^1)$. We will see that $f$ is locally injective and then the proof of Proposition~\ref{tautfilling}, will be reduced to checking that $f$ describes $M=\mathcal{H}_{1} \cup \mathcal{H}_{2}$.\\

\paragraph{Defining $f$:}
In the target, $(\vee^g S^1) \times (\vee^g S^1)$, consider each coordinate $(\vee^g S^1)$.
For simplicity (and foreshadowing), call one the blue coordinate and the other the red coordinate. After removing the wedge point of $\vee^g S^1$, $g$ open intervals remain. Orient these intervals arbitrarily. In the blue copy of $\vee^g S^1$, Pick $g$ points, one at the midpoint of each of these intervals. Call these blue points. Pick a bijection between the set of blue points and the set of blue diagram curves. Repeat the process on the red coordinate, and call the chosen points red points. Now orient the diagram curves on $\Sigma_g$ arbitrarily. \\
	
\begin{figure}
\centering \includegraphics[width=4in]{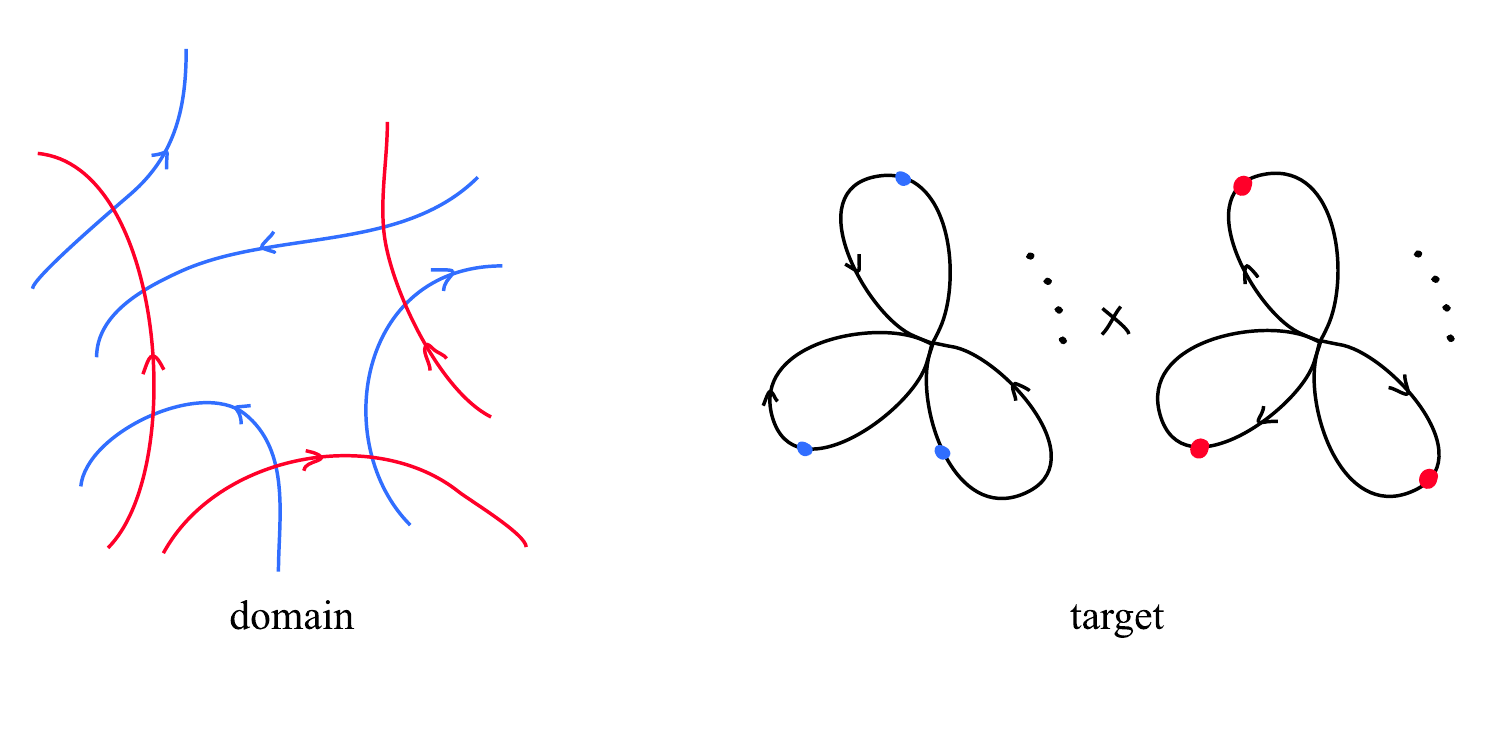}
\caption{A region of $\Sigma_g$ is shown with orientations chosen on the Heegaard diagram curves. The two coordinates of $(\vee^g S^1) \times (\vee^g S^1)$ are shown with blue and red points chosen as well as orientations on the edges.}
\end{figure}

Recalling the definition of ``side," there is now a right side (and left side) of each curve using these orientations. In the bouquet of circles, there is also a right side of each of the points in the intervals picked out in the preceding paragraph.\\

Due to Proposition \ref{filling}, this Heegaard diagram induces a cell decomposition structure on $\Sigma_g$. Every vertex in this cell decomposition is quadrivalent. Consider the dual cell decomposition. Every face in the dual decomposition must have four edges (it is a cell decomposition of squares). We metrize $\Sigma_g$ so that these dual $2$-cells are squares with the diagram curves intersecting the midpoint of the edges of each square. Each square has one blue curve and one red curve running through it, cutting it precisely into quarters. The two diagram curves that run through a given square are associated to two points: one in the blue coordinate $\vee^g S^1$ and one in the red coordinate $\vee^g S^1$. We would like to construct a map $f: \Sigma_g \longrightarrow (\vee^g S^1) \times (\vee^g S^1)$. The idea is that $f$ sends the interior of each square homeomorphically to the product of maximal intervals which contain these two points. The interior of the squares are mapped by $f$ such that the right side of the red curve is sent to the right side of the red point and the right side of the blue curve is sent to the right side of the blue point.\\
	
\begin{figure}
\centering \includegraphics[width=5in]{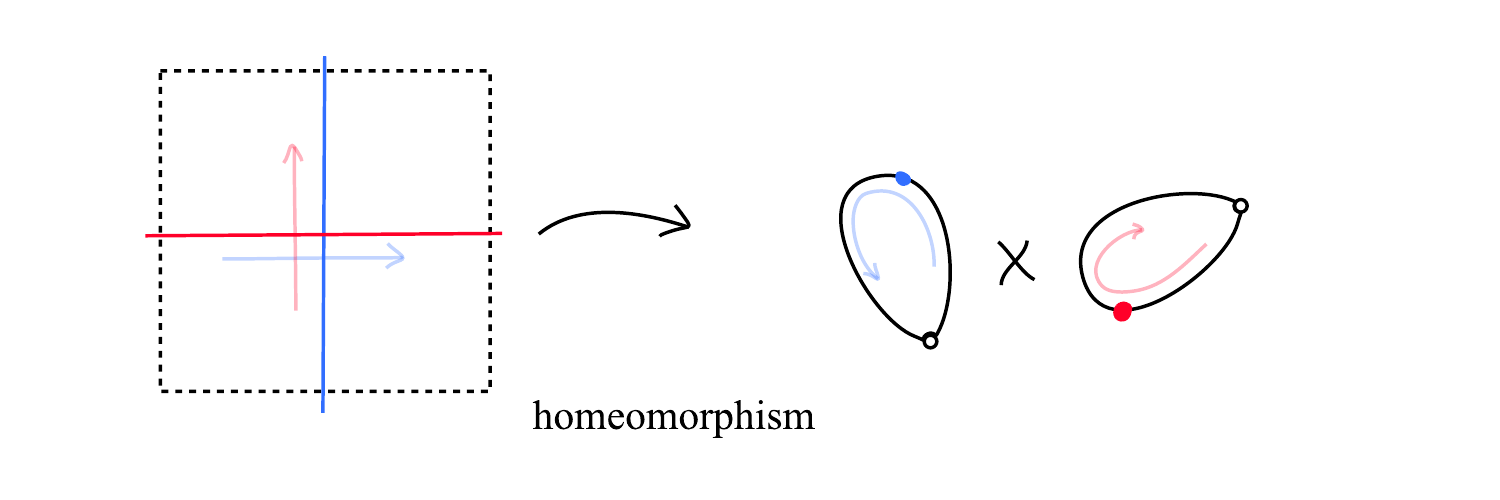}
\caption{The definition of $f$ on the interior of a square.}
\end{figure}

Now we say precisely how to set this up. Consider $f$ as two coordinates $f=f_1 \times f_2$, where $f_i:\Sigma_g \longrightarrow \vee^g S^1$. Each of the $1$-cells in the square cell decomposition of $\Sigma_g$ intersects exactly one diagram curve in one point. If this diagram curve is blue, let $f_1$ send the open $1$-cell to the maximal interval subset of $\vee^g S^1$ that contains the blue point assigned to the diagram curve. This should be done in such a way so that the half interval that intersects the right side of the diagram curve is sent to the right of the blue point. Let the boundary of this $1$-cell be sent to the wedge point. If the diagram curve is red, let $f_2$ send the open $1$-cell to the maximal interval subset of $\vee^g S^1$ that contains the red point assigned to the diagram curve, with the boundary of the $1$-cell being sent to the wedge point. Now consider a closed $2$-cell in the square cell decomposition of $\Sigma_g$. $f_1$ and $f_2$ are defined on alternating boundary edges of this cell. To define $f_1$, we consider the square as $[0,1] \times [0,1]$, where $f_1$ is defined on $[0,1] \times \{0\}$ and on  $[0,1] \times \{1\}$. The rest of the square is sent by $f_1$ to a straight-line homotopy between the definition(s) of $f_1$ on  $[0,1] \times \{0\}$ and  $[0,1] \times \{1\}$. The map $f_2$ is defined on the closed $2$-cell similarly. The map $f$ is defined this way on all of $\Sigma_g$ and it is by definition continuous. It is also clear that $f$ restricted to the interior of each cell is a homeomorphism onto a cell of the square complex structure of $(\vee^g S^1) \times (\vee^g S^1)$. Therefore $f$ is a combinatorial map of square complexes (see Section~\ref{square complexes} for definitions).\\

In order to prove Proposition~\ref{tautfilling}, we must check that $f$ is locally injective and that it describes the Heegaard decomposition. In order to do this, we need two lemmas.\\

\begin{lemma} \label{cornersinj}
$f$ is locally injective.
\end{lemma}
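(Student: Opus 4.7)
The plan is to show that $f$, being a combinatorial map of square complexes, is a combinatorial immersion: at every vertex $v$ of the dual square structure on $\Sigma_g$, the induced map on the link of $v$ is an injective graph homomorphism into the link of the unique vertex of $(\vee^g S^1)\times(\vee^g S^1)$, which is the complete bipartite graph $K_{2g,2g}$. This suffices: $f$ is already a homeomorphism on each open cell, and local injectivity at an interior point of a $1$-cell is an immediate consequence of link injectivity at its endpoints, because the two squares of $\Sigma_g$ adjacent to the $1$-cell give two distinct link-edges at each endpoint and must therefore map to two distinct target squares.

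Fix a vertex $v$ of $\Sigma_g$: it is the center of a complementary polygon $P$ with $n\ge 4$ sides (by the no-bigon hypothesis). The link of $v$ is an $n$-cycle whose vertices are the $n$ dual edges at $v$ (each crossing one boundary arc of $P$, alternating blue and red around the cycle) and whose edges are the $n$ corners of $P$. By construction of $f$, a dual edge that crosses a diagram curve $\delta$ is sent to the end of $f(\delta)$ determined by which side of $\delta$ contains $v$, while a corner of $P$ at an intersection $p\in\beta_i\cap\rho_j$ maps to the corner of the target square $e_i\times e_j$ picked out by which of the four quadrants of $p$ contains $P$.

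Vertex injectivity of the link map: if two link-vertices at $v$ had the same image, the two corresponding dual edges would cross the same diagram curve $\delta$ with $v$ on the same side of $\delta$ at both crossings; then $P$ would intersect one side of $\delta$ in at least two connected components, contradicting tautness. Edge injectivity of the link map: if two distinct corners of $P$ at $p_1\ne p_2$ in $\beta_i\cap\rho_j$ occupied the same quadrant, then the blue arcs of $\partial P$ emanating from $p_1$ and $p_2$ would both lie on $\beta_i$ with $P$ on the same side of $\beta_i$, so by tautness applied to $\beta_i$ these must in fact coincide as a single subarc of $\beta_i$ in $\partial P$ running between $p_1$ and $p_2$; the parallel argument applied to $\rho_j$ produces a single subarc of $\rho_j$ in $\partial P$ between $p_1$ and $p_2$; together these arcs exhaust $\partial P$, making $P$ a bigon — contradicting the no-bigon hypothesis.

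The main obstacle is the edge-injectivity step, since it is the only place where the no-bigon hypothesis enters, and it requires simultaneously invoking tautness (to coalesce each pair of potential parallel boundary arcs on the same side of a diagram curve into one) and the absence of bigons. Once both link injectivity conditions are verified at every vertex, $f$ is locally injective throughout $\Sigma_g$.
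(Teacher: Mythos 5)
Your argument is correct and follows the same overall strategy as the paper: since $f$ is a combinatorial map of square complexes, it suffices to check local injectivity at vertices; tautness rules out two dual $1$-cells at a vertex with overlapping images (your vertex-injectivity of the link map, the paper's ``$1$-cell'' case); and the no-bigon hypothesis handles the remaining ``$2$-cell'' case. Where you differ is in that last step. The paper observes that two overlapping $2$-cells at $v$ contribute at least three incident $1$-cells (else the dual complementary region is a bigon) and then pigeonholes: at least two of these $1$-cells have overlapping images, reducing the $2$-cell case to the already-settled $1$-cell case. You instead argue geometrically inside the complementary polygon $P$: if two corners of $P$ at $p_1, p_2 \in \beta_i \cap \rho_j$ map to the same quadrant, tautness forces the blue boundary arcs at $p_1$ and $p_2$ to be a single arc of $\beta_i$ (and likewise for red), so $\partial P$ is exhausted by two arcs and $P$ is a bigon, a contradiction. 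This is valid. One remark worth making: once vertex-injectivity of the link map is known and the link is an $n$-cycle with $n \geq 3$ (which is exactly what the no-bigon hypothesis supplies, since complementary regions have an even number of sides), edge-injectivity into the simple graph $K_{2g,2g}$ is automatic---two distinct edges of an $n$-cycle with $n\ge 3$ have distinct endpoint-pairs, and a vertex-injective map to a simple graph therefore sends them to distinct edges. So the careful argument in your edge-injectivity paragraph, though correct, is doing more work than is needed; the paper's pigeonhole step is essentially this observation.
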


\begin{proof}
In order to prove that $f$ is an immersion, or locally injective, it suffices to show that $f$ is locally injective at the vertices of $\Sigma_g$. This is because $f$ is a combinatorial map of square complexes (See \cite{haglundwise}).

Consider a vertex, $v$, of the square complex structure on $\Sigma_g$. Note that it is contained in the interior of a single polygon of the dual cell decomposition induced by the Heegaard diagram curves (since the squares are dual to this decomposition). Consider a small disk neighbourhood $D$ of $v$ and the intersection of the cells of the square complex structure with $D$. Recall that $f$ sends $1$-cells homeomorphically to $1$-cells, and $2$-cells homeomorphically to $2$-cells. Therefore, in order for $f$ to be locally injective at $v$, it is sufficient that $f(D \cap \sigma) \cap f(D \cap \delta) = \emptyset$ where $\sigma, \delta$ are cells in $\Sigma_g$ of the same dimension. 

Suppose two $1$-cells $\sigma$ and $\delta$ meeting at $v$ have  $f(D \cap \sigma) \cap f(D \cap \delta) \neq \emptyset$. It follows that $f(\sigma) = f(\delta)$. By the definition of $f_1$ and $f_2$ on the $1$-skeleton of $\Sigma_g$, this means that $\sigma$ and $\delta$ intersect the same Heegaard diagram curve. (Recall that each $1$-cell intersects exactly one diagram curve.) This means that the polygon dual to the vertex $v$ has two edges that are intervals on a single diagram curve. The hypothesis that the images of the intersection with $D$ intersect implies that one side of this diagram curve intersects the open polygon in two connected components, one along each of these edges. Thus the Heegaard diagram is not taut. This is a contradiction.\\

Suppose there are two $2$-cells $\sigma$ and $\delta$, that meet at $v$ and $f(\sigma) =f(\delta)$. Suppose further that $f(D \cap \sigma) \cap f(D \cap \delta) \neq \emptyset$. There must be two $1$-cells $e_1$ and $e_2$ that lie consecutively in the boundary of $\sigma$ that intersect $D$.  The edges $e_1$ and $e_2$ must be sent to consecutive $1$-cells in the boundary of $f(\sigma)$, incident to $f(v)$. This is because $f$ is a continuous combinatorial map of square complexes. Similarly, There are two $1$-cells, $e_3$ and $e_4$ that lie consecutively in the boundary of $\delta$ that must be sent to two consecutive edges in the boundary of $f(\delta)$ incident to $f(v)$.  The set $\{e_i\}$ must contain at least three elements. If it contained 2 or fewer elements, it would imply that only two squares meet at $v$ (or that $v$ does not have a surface neighbourhood). If only two squares meet at $v$, then the Heegaard diagram dual to the square complex structure contains a bigon. This contradicts the assumptions of our setting, see Lemma~\ref{tautfilling}.

 Since $f(\delta) = f(\sigma)$, it is possible to find three or four connected components of $D\cap (\cup_i e_i) \setminus \{ v \}$ that are sent by $f$ to two distinct connected components of edges in $f(D) \setminus f(v)$. We have already shown this is not possible earlier in this proof, as this means that there are two distinct $1$-cells $e_k$ and $e_j$ meeting at $v$ that have  $f(D \cap e_j) \cap f(D \cap e_k) \neq \emptyset$.
 
 Since $f$ is a combinatorial map, these are the only pathologies that would prevent $f$ from being injective on $D$.  Therefore $f$ is locally injective at the vertices of $\Sigma_g$ and therefore an immersion.
\end{proof}


We are now ready to prove Proposition~\ref{tautfilling}. The remaining work consists of checking that $f$ describes the given Heegaard decomposition.

\begin{proof}[Proof of Lemma~\ref{tautfilling}]
Begin with a taut filling Heegaard diagram on a surface of genus greater than zero. Construct the locally injective map $f:\Sigma_g \rightarrow (\vee^g S^1) \times (\vee^g S^1)$ as above. It remains to show that $f$ describes the Heegaard decomposition we began with. 

Recall that $f$ describes $M = \mathcal{H}_{1} \cup \mathcal{H}_{2}$ if, up to homotopy equivalence of the domain, and product homotopy equivalence of the target, $f= \iota_{1} \times \iota_{2} : \Sigma_g \longrightarrow \mathcal{H}_{1} \times \mathcal{H}_{2}$, where $\iota_i$ are the inclusion maps from the separating surface to $\mathcal{H}_{1}$ and $\mathcal{H}_{2}$ respectively. First, we check that $f_{1} = \iota_{1}$ up to homotopy equivalence of the target space. Consider the fibres of $f_{1}$. In what follows, refer to the example shown in Figure~\ref{fibres}. Recall that $f_{1}$ sends the interior of each square in $\Sigma_g$ to a maximal interval in $\vee^g S^1$. The blue curve running through the square is sent to the blue point in this interval. In fact, the full pre-image of each blue point in $\Sigma_g$ is a blue diagram curve. The full pre-image of any other point in the same maximal interval as a blue point is an isotopic curve. It may help to think about these statements on one closed square at a time first, and then put them together. The full pre-image of the wedge point is graph on $\Sigma_g$ given by the $1$-skeleton of the square complex structure, with the edges that intersect blue diagram curves removed. This can be seen by thinking about each square first. The pre-image of the wedge point in each closed square is the boundary edges that do not intersect a blue diagram curve. Notice that all the fibres except the pre-image of the wedge point are topological circles. All this follows from the definition of $f$.\\

\begin{figure}
\centering \includegraphics[width=5in]{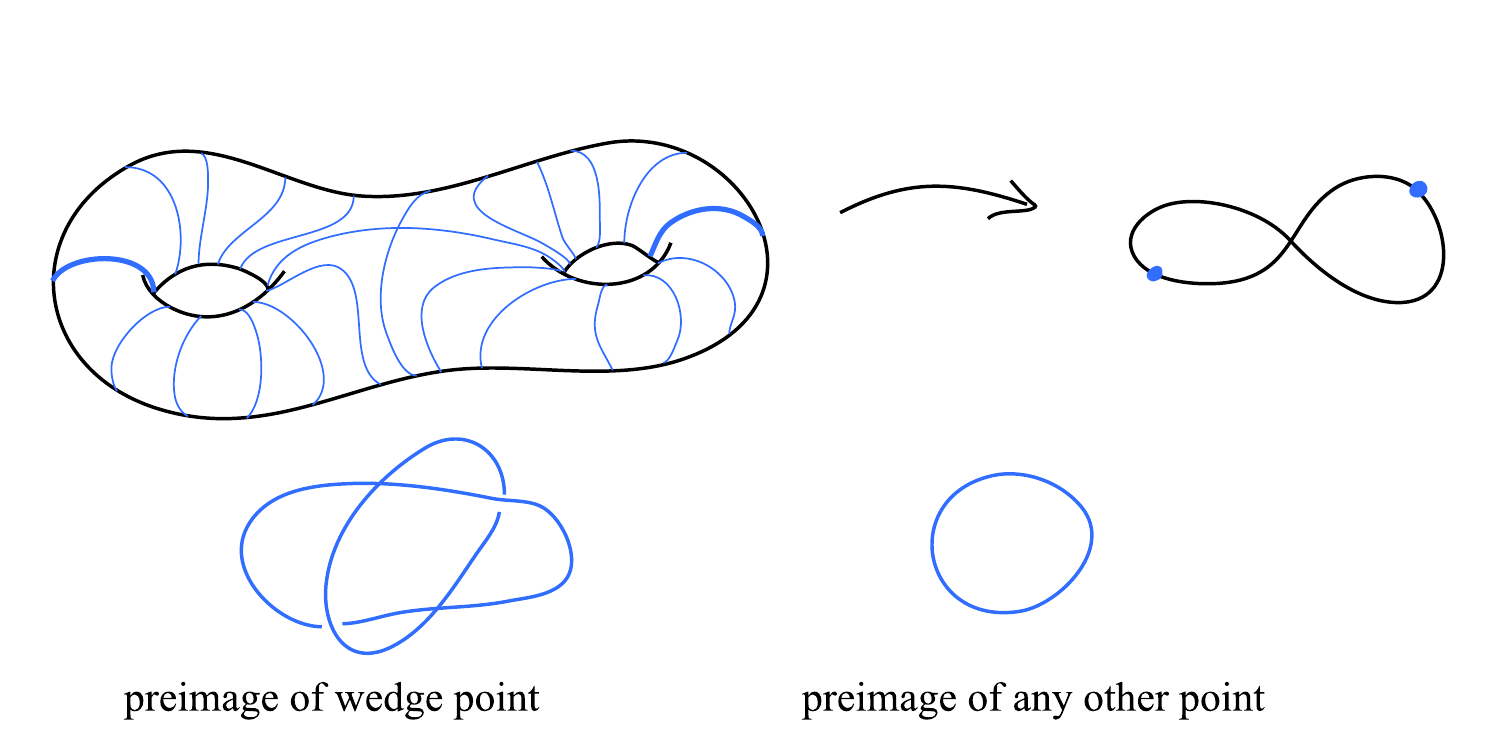}
\caption{A genus two example of one coordinate of $f$.}
\label{fibres}
\end{figure}

Consider including $\Sigma_g$ into a larger space $\mathcal{H}$, defined as follows.  Remove the $f_{1}$ pre-image of the wedge point from $\Sigma_g$. Each connected component of the remaining space is made up of the squares through which a given blue diagram curve runs. It is a cylinder of squares of ``height" $1$. There are $g$ such cylinders remaining, and they have a product structure of $(0,1) \times S^1$ induced by the fibres of $f$.  Include these cylinders into solid cylinders, $(0,1) \times D^2$ so that the $S^1$ fibres are included as boundaries to the $D^2$ fibres of the solid cylinder. The idea is to glue $\Sigma_g$ back together, and glue the solid cylinders back together accordingly. We now formalize this idea as follows. First glue two disks to each solid cylinder, so that it is $[0,1] \times D^2$ instead of $(0,1) \times D^2$. Parametrize the $D^2$ coordinate of these cylinders as a unit disk using polar coordinates $(r, \theta)$, $0\leq r\leq1$. Now glue $\Sigma_g$ back together along the pre-image of the wedge point. If two boundary disks ($\{0\} \times D^2$ or  $\{1\} \times D^2$) are identified along an arc on their $S^1$ boundary, each point in this arc belongs to a ray in each of the two disks. Identify these rays. Call the resulting space $\mathcal{H}$. Note that the centres of all the boundary disks are identified in this process. This is because the pre-image of the wedge point is connected. If it were not connected, then a collection of Heegaard diagram curves would separate the connected components. The union of these curves would be separating on $\Sigma_g$ and this contradicts the Heegaard diagram curves being homologically independent.\\

\begin{claim} \label{homeo}
$\mathcal{H}$ is a handle body and there is a homeomorphism $\phi: \mathcal{H} \longrightarrow \mathcal{H}_{1}$ so that the inclusion $\iota: \Sigma_g \hookrightarrow \mathcal{H}$ is equal to $\phi \circ \iota_{1}$.
\end{claim}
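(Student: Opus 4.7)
The plan is to recognize $\mathcal{H}$ as the standard handlebody assembled from the blue half of the Heegaard diagram---namely, $\Sigma_g$ with a $2$-handle glued along each blue curve and a single $3$-ball capping off the resulting $2$-sphere---and to produce an explicit homeomorphism $\phi:\mathcal{H}\to\mathcal{H}_1$ that restricts to the identity on $\Sigma_g$.

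First I would identify each solid cylinder $[0,1]\times D^2$ in the construction with a $2$-handle attached to $\Sigma_g$ along the corresponding blue curve $c_i$. The attaching annulus $(0,1)\times S^1\subset\partial([0,1]\times D^2)$ is glued to the annular component $(0,1)\times S^1$ of $\Sigma_g\setminus\Gamma$ that surrounds $c_i$, where $\Gamma$ denotes the preimage of the wedge point under $f_1$; after exchanging the roles of the two factors this is exactly the standard $2$-handle attaching data. Attaching all $g$ solid cylinders to $\Sigma_g$ in this way produces a compact $3$-manifold $N$ whose boundary consists of $\Sigma_g$ together with a $2$-sphere $S$ obtained by surgering $\Sigma_g$ along the blue cut system; that the new boundary component is $S^2$ is a standard computation from the fact that the blue curves are an independent set of $g$ cut curves.

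Next I would argue that the closure of $\mathcal{H}\setminus\mathrm{int}(N)$ is a $3$-ball with boundary $S$. The $2g$ end-disks, with centres identified at the single point $p$ and with sectors identified radially according to the combinatorics of $\Gamma$, form a $2$-complex $B$ sitting on the $S$-side of $N$; the radial contraction $(r,\theta)\mapsto(tr,\theta)$ respects the identifications and exhibits $B$ as the cone from $p$ on its outer boundary, which is precisely $S$. Consequently $\mathcal{H}\setminus\mathrm{int}(N)$ is tautologically a cone from $p$ on $S$, matching the $3$-ball cap in the standard construction of $\mathcal{H}_1$.

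The main obstacle is verifying that $\mathcal{H}$ is a genuine $3$-manifold at the central vertex $p$---equivalently, that the link of $p$ in $\mathcal{H}$ really is $S^2$, so the cone on $S$ is a $3$-ball rather than something more singular. This reduces to a combinatorial check: around each vertex $v$ of $\Gamma$ of local degree $n$, there are $n$ end-disk sectors meeting at a corresponding point of the link, and the sectoral identifications must assemble these sectors into a single polygonal region; these regions, one per vertex of $\Gamma$, must then glue along edges coming from the edges of $\Gamma$ to reproduce the surgered sphere $S$. Once this local analysis is in place, combining the identification $N\cong\Sigma_g\cup(\text{$2$-handles})$ with the description of $\mathcal{H}\setminus\mathrm{int}(N)$ as the $3$-ball cone on $S$ yields the desired $\phi:\mathcal{H}\to\mathcal{H}_1$; since the construction leaves $\Sigma_g$ pointwise fixed throughout, $\phi$ restricts to the identity on $\Sigma_g$ and so $\iota=\phi\circ\iota_1$.
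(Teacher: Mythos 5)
Your proposal is correct and takes essentially the same approach as the paper's proof: both decompose $\mathcal{H}$ into the blue $2$-handles (the paper's middle disks $\{1/2\}\times D^2$) plus a complementary piece identified as the cone from the central point $p$ on the surgered sphere $S\cong S^2$. The paper treats the fact that the link of $p$ is $S^2$ as immediate from the compatibility of the half-cylinder cone structures with the radial identifications of end-disks, whereas you explicitly flag it as a combinatorial check to carry out; this is a difference of emphasis rather than substance, and both ultimately rest on the standard fact that cutting along a cut system and capping with disks yields $S^2$.
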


Recall that in order to complete the proof of Proposition~\ref{tautfilling}, we wanted to show that $f_{1} = \iota_{1}$ up to homotopy equivalence. If Claim~\ref{homeo} is true, the proof is completed by noting that $\vee^g S^1$ is a deformation retract of $\mathcal{H}$. The deformation retraction is realized by taking each point on each disk in the solid cylinders $(r, \theta)$, and continuously shrinking the radius r  to $0$. Call this deformation retraction $d$. Each cylinder becomes an interval, and the way the solid cylinders were glued together leads to these intervals being glued together into a bouquet of circles. This can be seen by considering the glueing locus under $d$, and noting that it is all sent to a single point in the end. Then note that $f_{1} = d \circ \iota$ because we set up the product structure on the hollow cylinders so that each $S^1$ fibre was the $f_{1}$ pre-image of a point in $\vee^g S^1$ (not the wedge point). So we have $f_{1} = d \circ \iota = d \circ \phi \circ \iota_{1}$, and $d \circ \phi$ is a homotopy equivalence as desired (since the composition of a homeomorphism and deformation retraction gives a homotopy equivalence together with inclusion composed with a homeomorphism). This completes the proof of Lemma~\ref{tautfilling}.
\end{proof}

Now we prove Claim~\ref{homeo}.

\begin{proof}[Proof of Claim~\ref{homeo}]
Recall how $\mathcal{H}_{1}$ is attached to $\Sigma_g$: a disk is attached along each blue diagram curve on $\Sigma_g$, and then a $3$-ball is attached to the resulting $2$-complex. We claim that the method of attaching $\mathcal{H}$ is equivalent to this. Consider, in the construction of $\mathcal{H}$, the disk $\{\frac{1}{2} \} \times D^2$ in each solid cylinder. The $S^1$ boundary of such a disk is a blue diagram curve. So both constructions attach disks along the blue diagram curves.  Let $A$ be the space remaining when we remove these disks and $\Sigma_g$ from $\mathcal{H}$.  We will prove that $A$ is homeomorphic to a $3$-ball that is attached to the rest of $\mathcal{H}$ in the same way that the $3$-ball is attached in the construction of $\mathcal{H}_{1}$.\\

The middle disk $\{\frac{1}{2} \} \times D^2$ cuts each solid cylinder in half. Each half is a cone on a disk given by the ``bottom" and ``sides" of the cylinder. This is shown below.

\begin{figure}
\centering \includegraphics[width=4.5in]{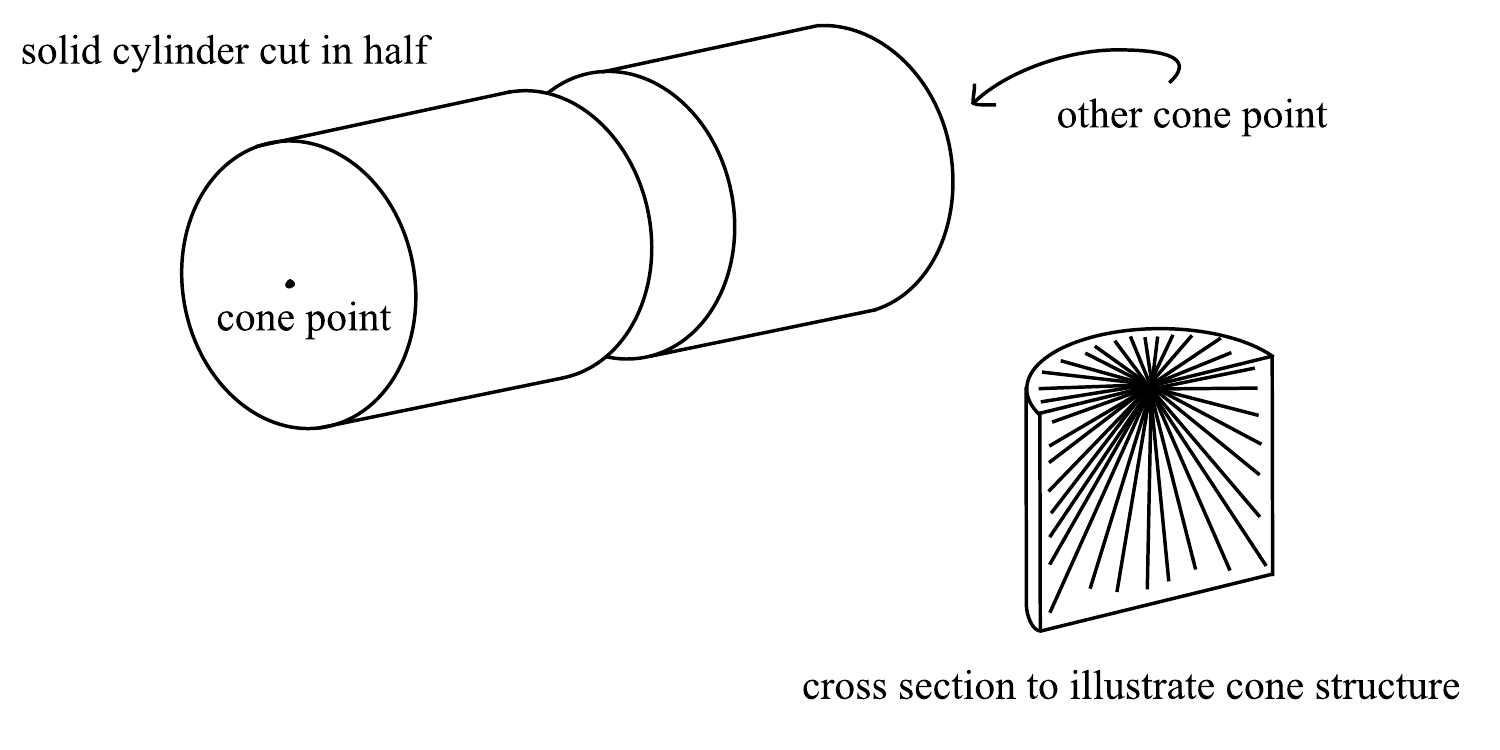}
\caption{The cone structure of half of a solid cylinder.}
\end{figure}

The two cones share the disk $\{\frac{1}{2} \} \times D^2$. Note that the cone structure restricted to $\{0\} \times D^2$ and $\{1\} \times D^2$ is compatible with the polar coordinates chosen on these disks. Recall that when the solid cylinders are glued together to form $\mathcal{H}$, rays of one of these disks are identified with rays of another. This implies that $A$ can be expressed as a cone relating to the cone structure we have placed on the half cylinders. Cut $\Sigma_g$ along the blue diagram curves (cut the cylinders in half). Glue in disks along all boundary components. This is simply $S^2$. $A$ is the interior of the cone on this space.  That is, an open $3$-ball. In this way, it is seen that $\mathcal{H}$ and $\mathcal{H}_{1}$ are attached to $\Sigma_g$ in the same way, completing the proof.
\end{proof}

\begin{rmk}
\label{rmk:osborne}
It is natural to ask whether Theorem~\ref{locinj} has a converse. Namely if $f$ being locally injective implies that the Heegaard splitting it encodes is irreducible. Osborne has provided examples that show that this is not the case \cite{osborne}. Figure~\ref{Osborne} depicts one such diagram for a genus two Heegaard splitting for a lens space with fundamental group $\mathbb{Z}_{13}$. The diagram is taut and filling, and therefore the map encoding it is locally injective, however it must be reducible since the genus is greater than one.

\begin{figure}
\centering \includegraphics[width=4.5in]{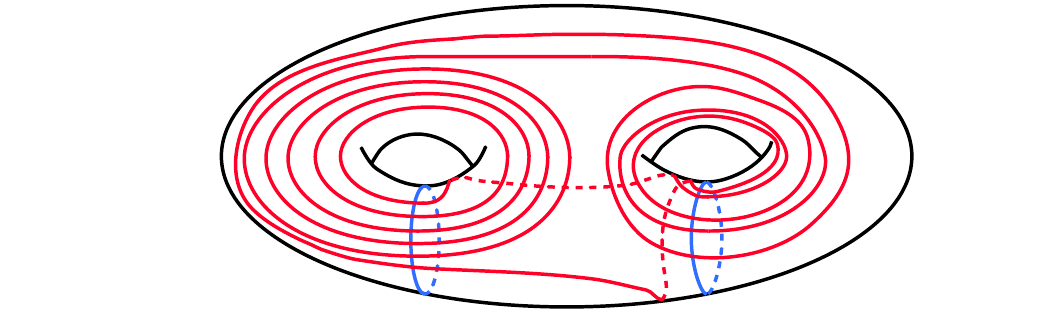}
\caption{Heegaard diagram for a lens space with fundamental group $\mathbb{Z}_{13}$}
\label{Osborne}
\end{figure}
\end{rmk}

\section{Virtually special square complexes and the augmented Heegaard diagram}
\label{sec: VH}

We have seen the emergence of square complex structures as we study Stallings maps. We now discuss the properties of these square complexes, and interpret them further in the language of geometric group theory.

\subsection{Non-positively curved square complexes that are virtually special}
Note that the cartesian product of two graphs naturally has the structure of a non-positively curved square complex. A square complex is non-positively curved if cycles in the link of every vertex have length at least four. The precise definitions and terminology about square complex structures can be found in \cite{haglundwise}.

When $f$ is locally injective, a non-positively curved square complex structure can be pulled back to the surface $\Sigma_g$, giving $\Sigma_g$ a tiling by squares where at least four squares meet at each vertex. The hyperplanes of this square complex structure (the curves dual to the square tiling) are precisely the Heegaard diagram curves. This is seen in the constructive proof of Theorem~\ref{locinj} and Proposition~\ref{tautfilling}. Therefore the following properties of the square complex structure are immediate.
\begin{itemize}
\item Hyperplanes do not self intersect (the hyperplanes are Heegaard diagram curves, and these do not self intersect)
\item The square complex is a VH-complex (V stands for vertical and H stands for horizontal. The blue curves of the Heegaard diagram are the \textit{vertical} hyperplanes, and the red curves are the \textit{horizontal} hyperplanes, see Definition~\ref{vh}).
\item Hyperplanes are two-sided (because all tubular neighbourhoods of simple closed curves on $\Sigma_g$ are orientable)
\item Hyperplanes do not directly self-osculate (since the Heegaard diagram is taut). Therefore the VH-complex is \textit{clean}.
\end{itemize}

The definitions of clean VH-complexes can be found breifly in Section~\ref{square complexes}. For more detail, see \cite{cleanVHcomplexes}. Note that the hyperplanes may indirectly self-osculate and they may also interosculate, so the square complex structure is not \textit{special} and so, while $f$ is a local embedding, it is not a local isometry.\\

\begin{prop}
\label{virtuallyspecial}
The square complex structure on $\Sigma_g$ dual to a taut Heegaard diagram is virtually special.
\end{prop}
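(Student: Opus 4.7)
The plan is to promote the clean VH structure already established on $\Sigma_g$ to a special structure after passing to a finite cover, by combining the Haglund--Wise criterion for virtual specialness with Scott's theorem on subgroup separability of surface groups.

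First I would consolidate what is already at hand: because the dual complex is a surface, each hyperplane is a simple closed diagram curve, so hyperplanes embed and are two-sided; tautness of the diagram rules out direct self-osculation, so the VH complex is clean. By the Haglund--Wise dictionary, the only remaining obstructions to being special are indirect self-osculation of a single hyperplane and inter-osculation of a pair of hyperplanes, both of which are permitted to occur in $\Sigma_g$ itself (as noted just above the statement) but need to be killed in a finite cover.

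The key input is that each hyperplane is an essential simple closed curve on $\Sigma_g$, so its carrier is an annulus and the associated hyperplane subgroup of $\pi_1(\Sigma_g)$ is infinite cyclic. By Scott's LERF theorem for closed surface groups, every finitely generated subgroup---in particular every cyclic hyperplane subgroup---is separable. I would then apply the Haglund--Wise criterion that a compact non-positively curved cube complex whose hyperplane subgroups are separable in its fundamental group is virtually special: for each of the finitely many witnessed indirect self-osculations or inter-osculations, LERF produces a finite-index subgroup of $\pi_1(\Sigma_g)$ in which the corresponding obstructing element lies outside the relevant double coset, so that in the associated cover that pathology does not lift. Intersecting these finitely many finite-index subgroups yields one finite cover $\widetilde{\Sigma}_g\to\Sigma_g$ with pulled-back square-complex structure that is clean, free of indirect self-osculation, and free of inter-osculation, hence special.

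The main technical obstacle I anticipate is the bookkeeping step of turning each pathological local configuration of two edges at a vertex (together with a connecting path in the carrier of one or two hyperplanes) into a precise coset-separation statement in $\pi_1(\Sigma_g)$ to which LERF can be applied; this requires care with basepoints and with the double-coset formulation of osculation used in Haglund--Wise. The low-genus case is easy to dispose of: for $g=1$ the only taut Heegaard diagram on the torus is a pair of transverse curves meeting in a single square, whose dual complex is the standard flat $S^1 \times S^1$, which is already special, so the entire content of the proposition lies in $g \geq 2$ where the LERF-plus-Haglund--Wise argument applies.
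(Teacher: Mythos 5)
You take a genuinely different route from the paper. The paper's proof is a one-line citation: it invokes Theorem 5.7 of Haglund--Wise, which asserts that a compact VH-complex with clean horizontal hyperplanes is virtually special, and observes that the square complex dual to a taut Heegaard diagram on $\Sigma_g$ is exactly such a complex (compact, VH, with both families of hyperplanes clean). Your plan instead builds a finite special cover by hand via the Haglund--Wise canonical completion/retraction machinery, using separability in $\pi_1(\Sigma_g)$ to kill the offending osculations one at a time. That is a more general strategy and a reasonable instinct for a (virtually) hyperbolic fundamental group, but it is also more expensive, and as written it has a real gap.

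The gap is in the inter-osculation case. Scott's LERF theorem gives separability of finitely generated subgroups of $\pi_1(\Sigma_g)$, and this does suffice to kill self-osculation (direct or indirect) of a single hyperplane $H$: you separate the witnessing element from the cyclic carrier subgroup $\pi_1(N(H))$. But inter-osculation of a crossing pair $H_1, H_2$ with carrier subgroups $A$ and $B$ is governed, in the Haglund--Wise criterion, by whether the double coset $AB$ is closed in the profinite topology of $\pi_1(\Sigma_g)$, and LERF does not in general imply double coset separability. The sentence ``LERF produces a finite-index subgroup in which the obstructing element lies outside the relevant double coset'' therefore asserts more than Scott's theorem gives you. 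The needed statement is in fact true for closed surface groups --- one may cite Niblo's theorem that surface groups are double-coset separable, or for $g \geq 2$ Minasyan's result that products of quasiconvex subgroups in a QCERF hyperbolic group are separable --- but it is a separate theorem from LERF and must be invoked explicitly. With that citation added, your argument closes; without it, the inter-osculation step is unjustified. (Your genus-one remark is fine: the dual square complex there is a flat one-square torus, which is already special.)
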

\begin{proof}
This is a corollary of Theorem 5.7 in \cite{haglundwise}, which states that any compact VH-complex with clean horizontal hyperplanes is virtually special. The square complex structure here has both horizontal and vertical hyperplanes being clean. The surface $\Sigma_g$ is certainly compact. Thus, we have the desired result.
\end{proof}

The connection between the square complex structure and geometric group theory methods can be used to understand $f$ better (see Section~\ref{guirardel core}).

To end the section, we state a generalized version of Theorem~\ref{locinj} for all Heegaard splittings.\\

\begin{defn}
\label{pinch}
A \textit{pinched surface} $\mathcal{P}$ is given by the quotient $S / \sim$ of a surface by an equivalence relation of the following form.  Given a finite set of compact connected subsurfaces of $S$ with boundary, $x \sim y \iff x=y$ or $x$ and $y$ are in the same component of the chosen subsurfaces. We call the images of the non-trivial equivalence classes \textit{pinching points}\\
\end{defn}

We distinguish pinched surfaces from singular surfaces with nodes, since a subsurface that has more boundary components than an annulus can be pinched to a point in the quotient in Definition~\ref{pinch}, leading to multiple connected compact surfaces identified at a single point in the quotient.\\

\begin{defn}
\label{graphpinch}
The \textit{graph of a pinched surface} $S / \sim$ is given as follows. Let $X$ be the union of the finite set of subsurfaces of $S$ in the definition of $\sim$. There is one vertex for each connected component of $X$, as well as one vertex for each connected component of $S \setminus X$. There is an edge connecting two vertices if the regions of $S$ they represent share a boundary.
\end{defn}

Note that this graph is bipartite. A connected component of $X$ cannot share a boundary with another connected component of $X$ since they are disjoint compact subsurfaces. Therefore, the corresponding vertices cannot have an edge connecting them. If two connected components of the complement of $X$ shared a boundary, this would mean that the boundary curve is a connected component of $X$. However, $X$ is a union of subsurfaces, so this is not possible. Therefore the corresponding vertices cannot have an edge connecting them.

\begin{thm}
\label{thm:pinch}
Suppose that $M$ does not have $S^2 \times S^1$ as a connect summand, and has Heegaard splitting $M=\mathcal{H}_1\cup \mathcal{H}_2$. Then there is a Stallings map $f$ for this splitting that factors through a quotient to a pinched surface $\Sigma^*$. The pinched surface $\Sigma^*$ has a square complex structure such that the the quotient map is a combinatorial map of square complexes that is locally injective. Further, the Heegaard splitting can be recovered from the square complex structure on $\Sigma^*$.
\end{thm}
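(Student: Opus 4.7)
The plan is to decompose the Heegaard splitting along reducing curves using Haken's lemma, apply Theorem~\ref{locinj} to each resulting irreducible piece, and assemble the locally injective Stallings maps on the pieces into a single Stallings map on $\Sigma_g$ that factors through the pinched surface obtained by collapsing annular neighborhoods of the reducing curves.

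First, I iterate Haken's lemma. Because $M$ has no $S^2 \times S^1$ summand, every essential sphere in $M$ is separating and every reducing curve on $\Sigma_g$ is separating. This produces a maximal collection of pairwise disjoint, pairwise non-isotopic essential simple closed curves $c_1, \dots, c_k$ on $\Sigma_g$, each bounding disks in both $\mathcal{H}_1$ and $\mathcal{H}_2$. Cutting along these and capping with disks yields closed surfaces $\hat R_1, \dots, \hat R_m$, each serving as the splitting surface for an irreducible Heegaard splitting of a summand $M^{(j)}$ of $M = M^{(1)} \# \cdots \# M^{(m)}$. Stabilizations, if present, contribute pieces carrying the genus-one irreducible splitting of $S^3$; all other pieces come from non-trivial prime summands of $M$, so every $\hat R_j$ has positive genus and Theorem~\ref{locinj} applies.

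Second, for each $j$ I pick a taut, filling, bigon-free Heegaard diagram on $\hat R_j$ and arrange, by reparametrizing the dual square complex inside the complement region that contains the capping disk, that the pinching point on $\hat R_j$ coincides with a $0$-cell of the dual square complex. The construction in Lemma~\ref{tautfilling} then yields a locally injective combinatorial Stallings map $f^{(j)}: \hat R_j \to G_1^{(j)} \times G_2^{(j)}$ sending the pinching point to the singular vertex. I choose pairwise disjoint closed annular neighborhoods $A_i$ of the $c_i$'s that are disjoint from all $R_j$'s, let $\Sigma^* = \Sigma_g/\!\sim$ collapse each $A_i$ to a point, set $G_i = \bigvee_j G_i^{(j)}$ (a bouquet of $g$ circles), and define $f:\Sigma_g \to G_1 \times G_2$ by $f^{(j)}$ on each $R_j$ (pulled back through the quotient $R_j \to \hat R_j$) and constantly equal to the singular vertex on each $A_i$. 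Continuity holds because each $f^{(j)}$ sends the boundary of $R_j$ to the singular vertex.

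Third, I verify the required properties: $(i)$ $f$ is a Stallings map for $M = \mathcal{H}_1 \cup \mathcal{H}_2$, since the union of the diagram curves from the various $f^{(j)}$'s gives $g = \sum_j g^{(j)}$ disjoint simple closed curves on $\Sigma_g$ spanning a Lagrangian subspace of $H_1(\Sigma_g)$ (the intersection pairing block-diagonalizes under the separating decomposition along the $c_i$), and Proposition~\ref{factorthrough} identifies the associated handlebody with the boundary connect sum of the $\mathcal{H}_1^{(j)}$, recovering the original $\mathcal{H}_1$; $(ii)$ $f$ factors through $\Sigma^*$ since $f|_{A_i}$ is constant; $(iii)$ the induced $\bar f: \Sigma^* \to G_1 \times G_2$ is a combinatorial map of square complexes, inheriting its cell structure from each $f^{(j)}$ with the pinching points realized as identified $0$-cells; $(iv)$ $\bar f$ is locally injective, on interiors of the $\hat R_j$ by Theorem~\ref{locinj}, and at each pinching point because neighborhoods in distinct wedge summands of $\Sigma^*$ land in distinct wedged copies of $G_i^{(j)}$ inside $G_i$, overlapping only at the singular vertex. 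Recovery then follows: each surface component of $\Sigma^*$ carries a taut filling Heegaard diagram dual to its square tiling which by Proposition~\ref{factorthrough} determines the irreducible splitting of the corresponding $M^{(j)}$, and the graph of the pinched surface (Definition~\ref{graphpinch}) encodes the connect-sum tree assembling these summands into $M$. The main obstacle is the second step: arranging the pinching point on each $\hat R_j$ to be a $0$-cell of the dual square complex of a suitable taut filling Heegaard diagram. Without this compatibility, the assembled $\bar f$ would fail to be a combinatorial map of square complexes, and local injectivity could break at the identifications where several pieces are wedged together.
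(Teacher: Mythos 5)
Your proposal is correct but takes a genuinely different route from the paper's proof, roughly reversing the order of operations. The paper works \emph{top-down}: it invokes Theorem~\ref{thm:existtaut} to produce a taut (but not necessarily filling) Heegaard diagram on $\Sigma_g$, then pinches every non-simply-connected complementary region of the diagram to form $\Sigma^*$, and finally proves a sequence of combinatorial facts (the graph of $\Sigma^*$ is a tree, no sphere or pseudo-manifold subsurfaces appear, the induced diagrams on the surface pieces are taut and filling) using the $S^2\times S^1$ hypothesis at several points, before handing off each piece to Lemma~\ref{tautfilling}. You work \emph{bottom-up}: you first decompose $M$ via iterated Haken's lemma into irreducible summands, put a taut filling diagram on each capped-off piece $\hat R_j$ with the pinching point at a $0$-cell, apply Theorem~\ref{locinj} to each, and assemble by wedging the bouquets. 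Your route buys a conceptually cleaner reason for the tree structure (it is automatic because the $S^2\times S^1$ hypothesis forces every reducing curve to be separating), and it makes the relationship to the connect-sum decomposition manifest from the start; the cost is that you import the full strength of Haken's lemma, whereas the paper gets by with Theorem~\ref{thm:existtaut} and Proposition~\ref{propirred}, and that you must carry the assembly compatibility (pinching point $=$ $0$-cell, disjoint target bouquets) as an extra hypothesis to verify rather than reading it off directly. You correctly flag and resolve the one real subtlety, namely that the pinching point on each $\hat R_j$ must be a $0$-cell of the dual square complex so that the wedge of the $f^{(j)}$'s is a combinatorial map, and your local-injectivity argument at the pinching points (distinct pieces land in subcomplexes $G_1^{(j)}\times G_2^{(j)}$ overlapping only at the singular vertex) is the right one. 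Two small clarifications would tighten the write-up: your continuity claim ``$f^{(j)}$ sends $\partial R_j$ to the singular vertex'' only makes sense after one unwinds that you mean the composite $R_j \twoheadrightarrow R_j/\partial R_j \cong \hat R_j \xrightarrow{f^{(j)}} G_1\times G_2$, which does collapse $\partial R_j$ to the $0$-cell and hence to the singular vertex (the map $f^{(j)}$ on $\hat R_j$ itself does \emph{not} send small circles about the $0$-cell to a point); and in verification $(i)$ the genus count $g = \sum_j g^{(j)}$ deserves a line, coming from $\chi(\Sigma_g) = \sum_j \chi(R_j)$ together with $k = m - 1$ for a tree.
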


\begin{proof}
By Theorem~\ref{thm:existtaut}, a taut Heegaard diagram for $M=\mathcal{H}_1\cup \mathcal{H}_2$ can be chosen. Then let $f$ be a Stallings map descibing it. Next, take a the union of some tubular neighbourhoods of the curves. Any component of the complement must have null homotopic image by $f$. The pinched surface $\Sigma^*$ is constructed by making each non-simply connected component of the complement a single equivalence class, and making any other point a singleton equivalence class, and then taking the quotient. Up to homotopy, $f$ must factor through $\Sigma^*$.

The graph of $\Sigma^*$ cannot have any cycles. A cycle consists of a set of vertices and edges arranged in a circle. Since the graph is bipartite, if there was a cycle, the vertices must alternate in whether they correspond to a subsurface of $\Sigma_g$  that is pinched, or not. In particular, this implies that the cycle cannot consist exclusively of vertices that correspond to subsurfaces of $\Sigma_g$ that are not pinched. Let $A \subseteq \Sigma_g$ be a subsurface that is pinched, and whose corresponding vertex is part of the cycle. Any essential simple closed curve $\gamma$ on $A$ is a reducing curve. Further, it is non separating in $\Sigma_g$ since a path can be found from one side of $\gamma$ to the other that projects to a path that goes around the cycle in the graph of $\Sigma^*$. By Proposition~\ref{propirred}, the reducing curve $\gamma$ bounds an embedded disk in $\mathcal{H}_1$ and another disk in $\mathcal{H}_2$. This forms a copy of $S^2 \subseteq M$ that is not separating (since $\gamma$ is not separating). This implies that $S^2 \times S^1$ is a connect summand of $M$, a contradiction.

Note that a closed surface subset of $\Sigma^*$ cannot be homeomorphic to a sphere. This could only occur when in $\Sigma_g$, a connected component $C$ of the tubular neighbourhood of the Heegaard diagram has genus 0. Then in the quotient, every complementary surface region bordering the image of $C$ is simply connected, forming a sphere. However, then each diagram curve in $C$ would be separating on the sphere in $\Sigma^*$, and therefore separating in $\Sigma_g$ (since the graph of $\Sigma^*$ is a tree). This is a contradiction.

Further, there are no open surface subsets of $\Sigma^*$ whose closure is a pseudo-manifold. If this were the case, there would be a connected component $C$ of the tubular neighbourhood of the Heegaard diagram with two boundary curves that bound the same complementary region. In this case, any one of those boundary curves is a reducing, non-separating curve on $\Sigma_g$, a contradiction (again, because it implies that $S^2 \times S^1$ is a connect summand of $M$). 

Therefore $\Sigma^*$ consists of several closed surfaces of positive genus identified along points in a tree like fashion (more than two may be identified at a point).

Consider the image of the Heegaard diagram curves in $\Sigma^*$. If a diagram curve passed from one surface subset of $\Sigma^*$ to another, it must have passed through one of the pinching points. This is not possible by the definition of $\Sigma^*$. If a number of curves are homologous in one of the surface subspaces, they must have been homologous in $\Sigma_g$. If any two quotient curves intersect in $\Sigma^*$, the original two curves must have intersected in $\Sigma_g$. Finally if a quotient diagram curve $\overline{\gamma}$ were to separate the surface subspace it lies in, the original curve $\gamma$ must have been separating in $\Sigma_g$. This is because the complement of every subsurface of $\Sigma_g$ used in the definition of $\Sigma^*$ is not connected (since the graph of $\Sigma^*$ is a tree, removing any vertex leaves a disconnected space). Therefore there cannot be any curve from one side of $\gamma$ to the other side, since it would project to a cycle in the graph of $\Sigma^*$.

Therefore the image of the Heegaard diagram curves on each surface subspace of $\Sigma^*$ are disjoint, as well as homologically non-trivial and independent. If any surface subset of $\Sigma^*$ contains fewer red or blue curves than its genus, then $\Sigma_g$ had fewer red or blue curves respectively than its genus. If a surface subset of $\Sigma^*$ had more blue (respectively red) curves than its genus, since they do not intersect each other, one must be separating on the subsurface, which in turn implies that it is separating on $\Sigma_g$, since the graph of $\Sigma^*$ is a tree. This is a contradiction. Therefore the quotient of the diagram curves form a Heegaard diagram on each surface subset of $\Sigma^*$. This Heegaard diagram must be taut and filling because the diagram on $\Sigma_g$ was taut, and the quotient procedure described precisely takes each component of the complement that was not homeomorphic to a disk, and makes it so. Therefore the Lemma~\ref{tautfilling} can be applied to each surface subset to show that the quotient map is homotopic to a locally injective map.

Note that any simple closed curves on $\Sigma_g$ in the pre-image of a point of $\Sigma^*$ are reducing curves (that is, they are null homotopic in $\mathcal{H}_1$ and in $\mathcal{H}_2$). Thus if $\Sigma_g \neq \Sigma^*$, $M=\mathcal{H}_1 \cup \mathcal{H}_2$ is reducible and the restriction of $f$ to the surface subspaces of $\Sigma^*$ are encodings of Heegaaard splittings of connect summands. 

The square complex structure on each closed surface subset of non-positive Euler characteristic is dual to the Heegaard diagram on the subsurface, so it contains all the information of the Heegaard splitting.
\end{proof}

In the case where $S^2 \times S^1$ is a connect summand, the portion of $\Sigma^*$ corresponding to $S^2 \times S^1$ may not be a surface subspace, but rather a pseudo manifold of genus 0, on which $f$ cannot be locally injective.
In the cases where $\Sigma_g = \Sigma^*$ , $f$ itself is locally injective and $\Sigma_g$ inherits a square complex structure so that $f$ is a combinatorial map of square complexes.

There have been several proposed methods to combinatorially reduce or simplify a Heegaard diagram to a normal form from which the topology of the 3-manifold is more readily recognized ( for example the Whitehead conjecture \cite{whitehead} and Volodin-Kuznetsov-Fomenko conjecture \cite{wavemove} for $S^3$). Each is true in certain cases, but not generally \cite{vkfcounterexample}. At this point, it is natural to ask whether Theorem~\ref{thm:pinch} gives one such method to recognize the connect sum structure of the 3-manifold. However, Remark~\ref{rmk:osborne} describes an example of the failure of taut Stallings maps to pinch every reducing curve. Nevertheless, the structure of Stallings maps suggest that there is something \textit{beyond} the surface that can be manipulated and simplified. We expose and explore this in the remainder of the paper, where we will use the following definition.\\

\begin{defn}
\label{def:simple}
A Heegaard diagram is \textit{simple} if there is a maximal disjoint set of separating reducing curves on $\Sigma_g$ that do not intersect the Heegaard diagram (these reducing curves must induce the prime decomposition of the $3$-manifold, possibly with additional $S^3$ connect summands if the Heegaard splitting is stabilized). A Heegaard diagram is not simple if every maximal disjoint set of separating reducing curves on $\Sigma_g$ intersects the Heegaard diagram. A Stallings map describing a simple Heegaard diagram is also called \textit{simple}.
\end{defn}

Note that if a Heegaard diagram for the $3$-manifold $M$ is simple and $f$ is a Stallings map for the Heegaard diagram, the surface subspaces of $\Sigma^*$ each form minimal genus ($\geq 1$) Heegaard diagrams for the prime connect summands of $M$.

\subsection{Group Actions on Trees and the Guirardel Core}
\label{guirardel core}
Let $M=\mathcal{H}_1\cup \mathcal{H}_2$ be a $3$-manifold with a Heegaard decomposition. Let $f$ be a Stallings map for this Heegaard splitting. By the proof of Theorem~\ref{thm:existtaut}, we may assume that $f$ is taut after possibly replacing it with another Stallings map in its $Out (F_g) \times Out (F_g)$ orbit. In this section, we uniformly assume that $S^2 \times S^1$ is not a connect summand of $M$. By Theorem~\ref{thm:pinch}, $f$ may be modified by a homotopy so that it factors through a pinched surface $\Sigma^*$ with a square complex structure, and $f$ is a combinatorial map of square complexes that is locally injective on each surface subspace of $\Sigma^*$. The Heegaard diagram corresponding to $f$ is taut but not necessarily filling.
This is the setting for the analysis that follows.

Consider the following commutative diagram of covering spaces, where $T_1$ and $T_2$ are each a copy of the universal cover of $\bouquet$, $S$ is the cover of $\Sigma_g$ corresponding to the kernel of $f_*: \pi_1(\Sigma_g) \rightarrow \pi_1(\bxb)$, and $\hat{f}$ is the corresponding lift of $f$.
\[
\begin{tikzcd}
S \arrow[r, "\hat{f}"] \arrow[d] \arrow[loop left, "\pi_1(\Sigma_g)"]& T_1 \times T_2 \arrow[d] \arrow[loop right, "\pi_1(\Sigma_g) \times \pi_1(\Sigma_g) \geq \Delta_{\pi_1(\Sigma_g)}"]\\
\Sigma_g \arrow[r, "f"] & (\vee^g S^1) \times (\vee^g S^1)
\end{tikzcd}
\]

Consider the action of $\pi_1(\Sigma_g)$ on this diagram. The group $\pi_1(\Sigma_g)$ acts on $S$ by deck transformations (with kernel equal to $\ker (f_*)$). It also acts on $T_i$ by deck transformations. This is because $T_i$ is the universal cover of $\bouquet$ with the free group on $g$ generators $F_g$, acting as deck transformations. The inclusion of $\Sigma_g$ as the boundary of a handle body is $\pi_1$-surjective, and therefore $f_{i*} \colon \pi_1(\Sigma_g) \rightarrow F_g$ is surjective as well. Thus each element $\alpha \in \pi_1(\Sigma_g)$ acts by the deck transformation $f_{i*}(\alpha)$ on $T_i$. This creates an action of $\pi_1(\Sigma_g)\times\pi_1 (\Sigma_g)$ on $T_1 \times T_2$, and the diagonal subgroup $\Delta_{\pi_1(\Sigma_g)}$ (elements of the form $(x,x)$) gives an action of $\pi_1(\Sigma_g)$ on $T_1 \times T_2$. The map $\hf$ is equivariant with respect to the actions of $\pi_1(\Sigma_g)$ on $S$ and on $T_1 \times T_2$.

We would like to understand the diagonal action of $\pi_1(\Sigma_g)$ on $T_1 \times T_2$ because this is where the information of the $3$-manifold $M$ is hidden. The Guirardel core (Definition~\ref{defn:guirardel}, and defined with more details in \cite{guirardel}) provides a way to understand the compatibility of the two coordinates of this action. The larger the volume of the core (a subset of $T_1 \times T_2$), the larger the obstruction to finding a third tree $T_3$ with an action of $\pi_1(\Sigma_g)$ and surjective equivariant maps onto $T_1$ and $T_2$.

For instance, if $\pi_1(\Sigma_g)$ acts on $T_1$ and $T_2$ in the same way, the Guirardel core has zero volume and every simple closed curve in $\Sigma_g$ is reducing.  The corresponding Heegaard diagram is the connect sum of several genus one diagrams for $S^2 \times S^1$. This diagram completely reveals the connect sum structure of this particular $3$-manifold.

The precise relationship between the Guirardel core and Heegaard decompositions can be elucidated using Stallings maps, as we shall see in the following theorem. Recall that the Guirardel core is the subset of $T_1 \times T_2$ given by removing all the light quadrants (see Definition~\ref{defn:guirardel}, which recounts the definition in \cite{guirardel}). Let $\hat{f}_1$ and $\hat{f}_2$ be the coordinates of $\hat{f}$.

\begin{lemma}
\label{lem_heavy_quad}
A quadrant $\delta_1 \times \delta_2$ of $T_1 \times T_2$ is heavy iff it intersects $\hat{f}(S)$.
\end{lemma}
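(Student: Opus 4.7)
The forward implication is a formal consequence of equivariance: if $Q$ is heavy, take the basepoint $(t_1, t_2) = \hat{f}(\tilde{x})$ for any $\tilde{x} \in S$ (legitimate by basepoint-independence of heaviness, see \cite{guirardel}); then the witnessing sequence $\gamma_n$ satisfies $\hat{f}(\gamma_n \cdot \tilde{x}) = \gamma_n \cdot \hat{f}(\tilde{x}) \in Q$, so $\hat{f}(S) \cap Q \neq \emptyset$.

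For the converse, I plan to construct a sequence of group elements directly witnessing heaviness. Given $(s_1, s_2) \in \hat{f}(S) \cap Q$, generic position (using that $Q$ is open and $\hat{f}(S)$ is locally 2-dimensional) allows me to assume it lies in the interior of a 2-cell $\sigma_0 = e_1 \times e_2 \subseteq Q$ of $\hat{f}(S)$. I then inductively build a walk $\sigma_0, \sigma_1, \sigma_2, \ldots$ through 2-cells of $\hat{f}(S) \cap Q$ by alternately crossing the vertical 1-cell of $\sigma_n$ at its \emph{deep} $T_1$-endpoint $v_1^+$ (the endpoint of the current $T_1$-factor farther from the boundary point $p_1$ of $\delta_1$) and, on the next step, the horizontal 1-cell at its deep $T_2$-endpoint. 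Theorem~\ref{thm:pinch} gives that $\hat{f}\colon S^* \to T_1 \times T_2$ is a locally injective combinatorial map from a pinched surface, so each 1-cell of $\hat{f}(S^*)$ is shared by exactly two 2-cells and the walk's next term is forced. The crucial verification is that this forced term stays in $Q$: since $e_1 \subseteq \delta_1$ and $e_1 \ne e_1^*$ (where $e_1^*$ is the unique edge of $T_1$ containing $p_1$), the deep endpoint $v_1^+$ cannot be an endpoint of $e_1^*$, hence every $T_1$-edge at $v_1^+$ other than $e_1$ lies entirely in $\delta_1$. Whichever such edge $\hat{f}$ selects, the adjacent 2-cell $\sigma_{n+1}$ has $T_1$-factor in $\delta_1$ and unchanged $T_2$-factor, so $\sigma_{n+1} \subseteq Q$ and is one combinatorial step deeper in $\delta_1$; the $T_2$-step is symmetric. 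After $n$ alternating steps the center of $\sigma_n$ lies at $T_i$-distance $\approx n/2$ from $(s_1, s_2)$, all realized inside the subtrees $\delta_i$, so $d_{\delta_i}(s_i, (\sigma_n)_i) \to \infty$ in each coordinate.

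To extract group elements, I use cocompactness: $\Sigma^* = S^* / \pi_1(\Sigma_g)$ is a finite square complex, so pigeonhole provides a subsequence of lifts $\tilde{\sigma}_{n_k} \subseteq S^*$ projecting to a single 2-cell of $\Sigma^*$ and therefore lying in one $\pi_1(\Sigma_g)$-orbit, $\tilde{\sigma}_{n_k} = \gamma_k \cdot \tilde{\sigma}$ for a fixed $\tilde{\sigma}$. Applying $\hat{f}$ and taking the basepoint to be the center of $\hat{f}(\tilde{\sigma})$, the elements $\gamma_k \in \pi_1(\Sigma_g)$ translate this basepoint into $Q$ with distances going to infinity in both coordinates; by the basepoint-independence of the definition, $Q$ is heavy.

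The principal obstacle is the verification that the forced adjacent 2-cell remains in $Q$: the tree product $T_1 \times T_2$ contains $2g - 1$ distinct squares meeting each 1-cell of $\sigma_n$, and the locally injective map $\hat{f}$ selects exactly one of them. One must rule out that this chosen square escapes $\delta_1 \times \delta_2$ through the boundary hyperplanes of $Q$, and the "deep endpoint" observation — that $v_1^+$ is never an endpoint of $e_1^*$ — is exactly what closes this gap.
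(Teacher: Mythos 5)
Your proof is correct, and for the converse (the substantive direction) it takes a genuinely different route from the paper's, so the comparison is worth spelling out. The paper walks through complementary \emph{polygons} of the lifted Heegaard diagram, exiting each through a diagram \emph{vertex} so that both tree coordinates advance simultaneously; its non-backtracking input is the tautness fact that at most two boundary arcs of a polygon map into $\partial(\delta_1\times\delta_2)$ and those two have different colours, so some corner of the polygon faces strictly deeper into both trees. You instead walk through the dual \emph{squares} of $\hat\Sigma^*$, alternately crossing the vertical and horizontal $1$-cells over the deep endpoints, so each step advances one coordinate and two steps advance both; your non-backtracking input is the purely tree-theoretic observation that the deep endpoint $v_1^+$ of an edge $e_1\subseteq\delta_1$ cannot be an endpoint of the edge $e_1^*$ at $p_1$, which forces the locally-injectively-selected adjacent square strictly deeper into $\delta_1$. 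These are dual renderings of the same mechanism, but the specific lemma doing the work is different: yours lives entirely in $T_1$, the paper's lives in the polygon on the surface. The second genuine difference is the production of the witnessing sequence $\{\gamma_n\}$: the paper advances $N+10$ units (with $N$ the number of complementary components in $\Sigma_g$) and then explicitly returns to the nearest orbit point, netting at least $10$ units per iterate; you obtain the sequence in one stroke by pigeonhole over the finitely many $2$-cells of $\Sigma^*$, which is cleaner and avoids the counting. Two small expository points: the ``exactly two $2$-cells per $1$-cell'' property belongs to the domain $\hat\Sigma^*$, not to the image $\hat f(S)$, which need not be embedded --- phrase the forced walk as a walk in $\hat\Sigma^*$ whose $\hat g$-images you track; and the deck group of $\hat\Sigma^*\to\Sigma^*$ is $\pi_1(\Sigma^*)/\ker g_*$, a quotient of $\pi_1(\Sigma_g)$ through which the actions on $T_1,T_2$ factor, which is why your $\gamma_k$ lift back to $\pi_1(\Sigma_g)$ as required by the definition of heaviness.
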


\begin{proof}
Let $\hat{f}_1$ and $\hat{f}_2$ be the coordinates of $\hat{f}$. Suppose $\delta_1 \times \delta_2 \cap \hat{f}(S)= \emptyset$. Using any point in $\hf(S)$ as the base point, its orbit must be contained in $\hf(S)$ because the map $\hf$ is equivariant with respect to the group action. For this reason, there can be no sequence of elements in the orbit of the base point that is contained in $\delta_1 \times \delta_2$. Therefore  $\delta_1 \times \delta_2$ must not be heavy (and therefore must be light).

Conversely, suppose $\delta_1 \times \delta_2 \cap \hat{f}(S)\neq \emptyset$. This is equivalent to $\hf_1^{-1}(\delta_1) \cap \hf_2^{-1}(\delta_2) \neq \emptyset$. We show that $\delta_1 \times \delta_2$ must be heavy. Consider the covers $S_1$ and $S_2$ of $\Sigma_g$ corresponding to the subgroups $\ker {f_1}_*$ and $\ker {f_2}_*$ of $\pi_1(\Sigma_g)$ respectively. These covers are planar surfaces that are the boundaries of the universal covers of the handle bodies $\mathcal{H}_1$ and $\mathcal{H}_2$ respectively. In fact, $\hat{f}_1$ can be interpreted as the projection from $S$ to $S_1$ followed by the inclusion of $S_1$ into the universal cover of $\mathcal{H}_1$, followed by a deformation retraction $d_1$ of this space to $T_1$ (a lift of the deformation retraction of $\mathcal{H}_1$ to $\vee^g S^1$). The blue Heegaard diagram curves lift to separating curves in $S_1$ and the red Heegaard diagram curves lift to separating curves in $S_2$. This is because on $\Sigma_g$ they are the pre-images of midpoints of edges in $\vee^g S^1$ by $f_1$ and $f_2$ respectively, and so in $S_i$, we may think of them as the pre-images of the mid points of edges of $T_i$ by the deformation retraction $d_i$ alluded to above. Recall that these curves are taut but not filling.

To understand how a path $\gamma$ in $\hat{S}$ is sent to a path in $T_1$ by $\hat{f}_1$, we may project $\gamma$ to $S_1$ and observe the algebraic intersection with blue diagram curves. (Recall that the blue diagram curves are the pre-images of the midpoints of the edges in $T_1$). The sum of the absolute value of the algebraic intersection numbers of $\gamma$ with each of the blue diagram curves is the distance between the end points of $\hat{f}_1(\gamma)$ in $T_1$.

Let $x \in \hf_1^{-1}(\delta_1) \cap \hf_2^{-1}(\delta_2)$. If $x$ is on (the lift of a) Heegaard diagram curve, it can be perturbed slightly to be off all diagram curves while remaining in $x \in \hf_1^{-1}(\delta_1) \cap \hf_2^{-1}(\delta_2)$ (since this set is open and the curves are not full-dimensional sets).

From here, we seek a sequence $\{\alpha_i \cdot x \}_i$ where $\alpha_i \in \pi_1(\Sigma_g)$, such that the sequences of distances $\{d_{T_1}(x, \alpha_i \cdot x)\}_i$ and $\{d_{T_2}(x, \alpha_i \cdot x)\}_i$ are both unbounded. We construct a sequence by constructing a path in $S$ beginning at $x$ that does not backtrack significantly when its image is projected to each tree. The backtracking in the $T_i$ is monitored by algebraic intersection with blue diagram curves when projected to $S_i$.

Since $x$ is not on a lifted Heegaard diagram curve, it must be in the interior of a connected component  of the complement in $S$. Call these complementary components. Since $f$ is locally injective, this complementary component cannot have a problem, and therefore, at most two edges of the boundary are sent to the boundary of $\delta_1 \times \delta_2$ and they cannot both be the same colour. The complementary component must have at least four edges, so there must be a way for a path beginning at $x$ to exit the complementary component via a vertex (transversally crossing a red and blue hyperplane simultaneously) without exiting $\hf_1^{-1}(\delta_1) \cap \hf_2^{-1}(\delta_2)$.

The path has now entered a new complementary component in $S$. We now extend the path further, taking care not to back track at each step. This new complementary component must also not have a problem. Therefore, as long as the path does not exit this complementary component by crossing the same two edges it crossed upon entry, the path's image will not back track in $T_1$ and in $T_2$. The complementary component has least four edges, and so again, it is possible to exit this complementary component via a vertex (simultaneously transversally crossing a blue and red hyperplane) without backtracking in $T_1$ and in $T_2$.

We continue in this way, extending the path. Each time the path crosses diagonally over a vertex (simultaneously transversally crossing a blue and red diagram curve), its image goes one unit deeper in $T_1$ and $T_2$.

Let $N$ be the number of complementary components in $\Sigma_g$. We extend the path from $x$ as above until it visits $N+10$ complementary components. At this point, we extend the path by a segment connecting it to the the nearest point in the $\pi_1(\Sigma_g)$ orbit of $x$. This is at most $N$ complementary components away. The completed path now connects $x$ to a point $\alpha_1 \cdot x$ for some $\alpha_1 \in \pi_1(\Sigma_g)$. The image of the path moves distance $N+10$ units deeper into $\delta_1$ and $\delta_2$, and then backtracks at most $N$ units. In total then, $d_{T_j}(x, \alpha_1 \cdot x) \geq 10 $  for $j=1, 2$.

We find $\alpha_{i+1} \cdot x$ by extending our path from $\alpha_i \cdot x$ in the same way as above: first taking care to move $N+10$ unit deeper into both trees, and then proceeding to the nearest point in the orbit of $x$. This constructs a sequence $\{ \alpha_i \}$ so that $d_{T_j}(x, \alpha_i \cdot x) \geq 10i$, for both $j=1,2$. These sequences of distances are unbounded as desired. Therefore, by definition, $\delta_1 \times \delta_2$ is heavy.
\end{proof}

Lemma~\ref{lem_heavy_quad} implies that the Guirardel core contains $\hf (S)$, but it also cannot be much larger than $\hf (S)$ (because every quadrant not intersecting $\hf (S)$ is light).

In fact, the Guirardel core is the minimal (with respect to inclusion), non-empty $\pi_1(\Sigma_g)$--invariant closed connected set such that the fibres of the projections to $T_1$ and $T_2$ are connected (see \cite{guirardel},  section 5). The theorem below then follows and characterizes the relationship between $f$ and the core.\\

\begin{thm}
\label{thm:coreandhatf}
Let $f: \Sigma_g \rightarrow (\vee^g S^1) \times (\vee^g S^1)$ be a locally injective Stallings map describing a Heegaard decomposition. The Guirardel core of the actions of $\pi_1(\Sigma)$ on $T_1$ and $T_2$ is the smallest set $X$ such that $\hat f(S) \subseteq X$ and ${p_1}^{-1} (t)$ is connected for every $t \in T_1$ and ${p_2}^{-1}(t)$ is connected for every $t \in T_2$.

\end{thm}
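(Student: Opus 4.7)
The plan is to combine Lemma~\ref{lem_heavy_quad} with the characterization of the core recalled at the end of Section~\ref{prelim:guirardel} (Proposition~5.1 of \cite{guirardel}). There are only two things to check: that $\hat f(S)\subseteq C$, and that $C$ is minimal among the sets described.

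First I would verify $\hat f(S)\subseteq C$. By Definition~\ref{defn:guirardel}, $C$ is the complement in $T_1\times T_2$ of the union of the light quadrants, so it suffices to see that no point of $\hat f(S)$ lies in a light quadrant. If $p\in \hat f(S)$ belongs to some quadrant $Q$, then $Q\cap \hat f(S)\neq\emptyset$, and Lemma~\ref{lem_heavy_quad} forces $Q$ to be heavy. Hence $p$ lies in no light quadrant, i.e.\ $p\in C$. Next I would observe that $C$ itself is a valid candidate: by Guirardel's construction $C$ is closed, connected, $\pi_1(\Sigma_g)$-invariant, and has connected fibres under both coordinate projections, and we have just shown that it contains $\hat f(S)$. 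For minimality I would invoke Proposition~5.1 directly: any non-empty closed connected $\pi_1(\Sigma_g)$-invariant subset $X\subseteq T_1\times T_2$ with connected coordinate fibres contains $C$. Any candidate $X$ in the theorem is automatically non-empty (it contains $\hat f(S)$), so $C\subseteq X$, which together with the first step yields that $C$ is the minimum.

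The one subtle point I would flag is bookkeeping around the hypotheses on $X$: the theorem as stated only asks that $\hat f(S)\subseteq X$ and that the coordinate fibres of $X$ be connected, whereas Proposition~5.1 additionally assumes $X$ is closed, connected, and $\pi_1(\Sigma_g)$-invariant. These extra adjectives should be read as implicit---$\hat f(S)$ is already $\pi_1(\Sigma_g)$-invariant by equivariance of $\hat f$, and Guirardel's characterization is naturally formulated in the category of closed connected $G$-invariant subsets. A clean write-up either lists the three conditions explicitly in the statement, or checks that replacing a general $X$ by its closure, $G$-saturation, and the connected component containing $\hat f(S)$ still satisfies the connected-fibre condition; once that is done, the argument is simply Lemma~\ref{lem_heavy_quad} plus Guirardel's characterization, with no real additional content.
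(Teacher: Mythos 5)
Your argument is essentially identical to the paper's own proof: both show $\hat f(S)\subseteq C$ via Lemma~\ref{lem_heavy_quad} (so $C$ is a candidate set), and both invoke Guirardel's Proposition~5.1 for minimality. The bookkeeping issue you flag is real, but it is also present, silently, in the paper's proof: the paper simply asserts that the minimal candidate $X$ is non-empty, closed, connected, and $\pi_1(\Sigma_g)$-invariant before invoking Proposition~5.1, whereas those properties need a short verification (non-emptiness and $G$-invariance come from $\hat f(S)\subseteq X$ and equivariance, but closedness and connectedness of the minimal candidate require an argument, e.g.\ replacing a candidate by its closure and the connected component containing $\hat f(S)$ and checking the fibre-connectedness is preserved). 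Your write-up is the more careful of the two on this point.
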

 \begin{proof}
 Let $X$ be the the smallest set $X$ such that $\hat f(S) \subseteq X$ and ${p_1}^{-1} (t)$ is connected for every $t \in T_1$ and ${p_2}^{-1}(t)$ is connected for every $t \in T_2$. The core contains $\hf(S)$ and the fibres of $p_1$ and $p_2$ are connected in the core, so the core must contain $X$. On the other hand, $X$ is a non-empty,  $\pi_1(\Sigma_g)$--invariant, closed, connected set, and the projections to $T_1$ and $T_2$ are connected by definition, so the core must be contained in $X$. This implies that $X$ is the Guirardel core.
 \end{proof}

 \subsection{Complexity of the Guirardel core and augmented Heegaard diagrams}
Let $f$ be a taut Stallings map describing a Heegaard splitting for a $3$-manifold $M$. We continue to assume that $S^2 \times S^1$ is not a connect summand of $M$. Recall that $f$ factors through a pinched surface $\Sigma^*$. Let the factor map be $g: \Sigma^* \rightarrow \vee^g S^1 \times \vee^g S^1$.  Let $\hat{\Sigma}^*$ be the cover of $\Sigma^*$ that corresponds to the kernel of $g$. Then $\hf$ factors through $\hat{\Sigma}^*$ with factor map $\hat{g}:\hat{\Sigma}^* \rightarrow T_1 \times T_2$.
\[
\begin{tikzcd}
S \arrow[dr] \arrow[rr, "\hf"] \arrow[dd]&					&T_1\times T_2 \arrow[dd]\\
							& \hat{\Sigma}^* \arrow[ru, "\hat{g}"]&	\\
\Sigma_g \arrow[dr] \arrow[rr, near start, "f"]	&					&(\vee^g S^1) \times (\vee^g S^1)\\
							& \Sigma^* \arrow[ru, "g"]	&
\arrow[from=2-2, to=4-2, crossing over]
\end{tikzcd}
\]

Given a Heegaard splitting, all choices of Stallings maps $[f]$ encoding it have the same $\pi_1$-kernel, $f_*$, so the topology of the cover $S$ is fixed. As we pass through the $\text{Out} F_g \times \text{Out} F_g$ orbit of $f$, passing through all the maps $\Sigma_g \rightarrow \vee^g S^1$ that describe the splitting, the square complex structure on $\hat{\Sigma}^*$ changes and $\hat{f}$ changes. If there is a reducing curve in $\Sigma_g$, it lifts to $S$. The disk its image bounds in $T_1 \times T_2$ is in the Guirardel core, because of the coordinate-wise convex condition. A longer reducing curve suggests more squares in the core needed to form a disk that this curve bounds. This leads to the following question.\\

\begin{q}
\label{pinchall}
Does the smallest (in some sense) Guirardel core in the Out$F_g \times$ Out$F_g$ orbit  give an $f$ that is simple (pinches a maximal set of reducing curves, see Definition~\ref{def:simple})?
\end{q}
 
 If the answer to this question is positive, we would have a new characterization for Heegaard diagrams that are connect sums of irreducible diagrams. Then working with Guirardel cores, it may be possible to find a combinatorial algorithm to move within the Out$F_n \times$ Out$F_n$ orbit to find these diagrams.
 
One attempt to compare sizes of Guirardel cores, comes from looking at their quotients. Given a Stallings map $f$ encoding a Heegaard splitting, let $\mathscr{G}_f$ be the quotient of the Guirardel core (of the action of $\pi_1(\Sigma_g)$ on $T_1$ and $T_2$) by the diagonal action of $\pi_1(\Sigma_g)$. The area of $\mathscr{G}_f$, in terms of number of squares, could be used as a measure of the size of the Guirardel core. However even $\mathscr{G}_f$ is likely infinite in many cases. Nevertheless, the following construction gives a partial answer to Question~\ref{pinchall}. We look at a certain subset of the core whose quotient does have finite area.

\subsubsection{Construction of augmented Heegaard diagrams}
Begin with a taut Stallings map for a Heegaard splitting for a $3$-manifold $M$ that does not have $S^2 \times S^1$ as a connect summand. Consider an embedded disk in $T_1 \times T_2$ whose boundary is a subset of $\hat f (S)$, and when pulled back to $S$, and then projected to $\Sigma_g$, the boundary is a simple closed curve $\gamma$. Note that such a disk is in the Guirardel core due to the convexity of fibres. We would like to focus on this part of the Guirardel core.

Take the pinched surface $\hat f$ factors through, $\hat{\Sigma}^*$, and attach an isomorphic disk (tiled with squares in the same way) along the pullback of its boundary. The quotient map $\hat g: \hat{\Sigma}^* \rightarrow T_1 \times T_2$ can be extended to this disk by sending it to the isomorphic disk bounded by $\hat g (\gamma)$ in $T_1 \times T_2$. The unextended map $\hat g$ is locally injective. If the extended $\hat g$ is no longer locally injective, we do not want to include the disk. In this way, the augmented Heegaard diagram is built by attaching disks that do not disrupt the local injectivity. A precise definition will follow shortly.

Given a taut Stallings map $f$ for a Heegaard splitting of $M$, the boundary components of the subsurfaces that are pinched by $f$ provide a set of disjoint reducing curves. We may use the two disks each of these reducing curves bounds (one in each handle body) to decompose $M$ by connect sum. The connect summands come with Heegaard splittings of lower geni (that add to give the original genus). In this way, we look at the closed surface subspaces of $\Sigma^*$ as Heegaard splittings of connect summands of $M$.

We may ask whether each of these are reducible. If $f$ restricted to one of the surface subspaces describes a reducible Heegaard splitting, then by Haken's lemma, there must be a reducing curve on the splitting surface. Since $S^2 \times S^1$ is not a connect summand, this curve must be separating, and thus we may use the two disks it bounds (one in each handle body) to decompose $M$ further by connect sum. We continue in this way until we have a set of disjoint reducing simple closed curves on $\Sigma$ such that the connect components of the complement represent irreducible Heegaard splittings of the prime connect summands of $M$.

There are many choices in the procedure above, and so there are several sets of curves one may end with. For any given possibility, we may construct an augmented Heegaard diagram.\\

\begin{defn}
\label{mycore}
Given a taut Stallings map $f$ for a Heegaard splitting, and a maximal set of disjoint reducing curves $\{\gamma_1, \ldots \gamma_n\}$, constructed as above (namely, so that they include the curves bounding the subsurfaces pinched by $f$), we construct the $\textit{augmented Heegaard diagram}$  by gluing disks to $\Sigma^*$ along each $\gamma_i$.
Beginning with the pinched surface $\Sigma^*$, for each $\gamma_i$, consider its lift to $\hat{\Sigma}^*$,  $\hat{\gamma}_i$ (recall $\hat{\Sigma}^*$ is the pinched surface $\hat{f}$ factors through). The image $\hat{g}(\hat{\gamma}_i)$ in $T_1 \times T_2$ bounds a disk $D$.  We attach an isometric copy of $D$ to $\Sigma^*$ along $\gamma_i$ and extend the map $g$ to send this copy to the projected image of $D$ in $\vee^g S^1 \times \vee^g S^1$. By modifying $\gamma_i$ by a homotopy, and removing some peripheral squares of $D$, it is possible to choose $D$ so that the extended map $g$ remains locally injective. The resulting square complex is an \textit{augmented Heegaard diagram} $\mathscr{C}$.
\end{defn}

Notice that we now have the following commutative diagram, with $\hat{\mathscr{C}}$ being the cover of $\mathscr{C}$ corresponding to $\ker g_*$.
\[
\begin{tikzcd}
S \arrow[dr] \arrow[rr, "\hf"] \arrow[dd]&					&T_1\times T_2 \arrow[dd]\\
							& \hat{\mathscr{C}} \arrow[ru, "\hat{g}"]&	\\
\Sigma_g \arrow[dr] \arrow[rr, near start, "f"]	&					&(\vee^g S^1) \times (\vee^g S^1)\\
							& \mathscr{C} \arrow[ru, "g"]	&
\arrow[from=2-2, to=4-2, crossing over]
\end{tikzcd}
\]

The purpose of only attaching disks that maintain the local injectivity of $\hat g$ is that this ensures that $\hat{\mathscr{C}}$ and $\mathscr{C}$ are clean $VH$-complexes. Note that $\mathscr{C}$ is compact, and therefore virtually special.\\

\begin{rmk}
\label{conditionforirreducibility} Let $M$ be a $3$-manifold that does not have $S^1 \times S^2$ as a connect summand. A Heegaard decomposition of $M$ is irreducible if and only if it has an augmented Heegaard diagram that is a surface.
\end{rmk}

It is also notable that one could define the augmented Heegaard diagram for a Stallings map $f$ that is not taut.  In this case, $f$ would still factor through a pinched surface, however, the factor map would not be locally injective. In this case, the image of $\hat f$ is may not be contained in the Guirardel core. However, one could still attach disks along reducing curves as in Definition~\ref{mycore} to obtain an augmented Heegaard diagram.

\subsubsection{Area properties of the augmented Heegaard diagram}

The fact that one may take irreducible Heegaard diagrams and then take their connect sum to form a Heegaard diagram for the connect sum of the $3$-manifolds they represent implies that certainly there exist Heegaard diagrams that are simple, as in Definition~\ref{def:simple}. Equivalently, there exist Stallings maps that pinch a maximal reducing multicurve. However, as mentioned earlier, it is difficult to detect from the combinatorics of a Heegaard diagram whether it is simple. The following theorem relates the area (number of squares) of an augmented Heegaard diagram to whether it is simple or not.\\

\begin{thm}
\label{thm:augheeg}
Let $M=\mathcal{H}_1 \cup \mathcal{H}_2$ be a Heegaard splitting of a $3$-manifold $M$ that does not have $S^2 \times S^1$ as a connect summand.
In the space of Stallings maps, $B_{M,\mathcal{H}_1, \mathcal{H}_2}$, each has several augmented Heegaard diagrams. Of all of these, the Stallings map with the smallest-area augmented Heegaard diagram $\mathscr{C}$, has $\mathscr{C} = \Sigma^*$. In other words, a Stallings map with the smallest-area augmented Heegaard diagram is simple.
\end{thm}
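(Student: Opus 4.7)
The plan is to prove the contrapositive: if the Heegaard diagram associated to a Stallings map $f$ is not simple, then its augmented Heegaard diagram $\mathscr{C}$ is not of minimal area within the $\text{Out}(F_g) \times \text{Out}(F_g)$ orbit of $f$. A preliminary observation translates the conclusion: by Definition~\ref{mycore}, $\mathscr{C} = \Sigma^*$ exactly when no disks need to be attached, which happens exactly when some maximal disjoint set of reducing curves lies entirely in the subsurfaces pinched by $f$. These pinched subsurfaces are the non-disk components of the complement of the diagram in $\Sigma_g$ (Theorem~\ref{thm:pinch}), so $\mathscr{C} = \Sigma^*$ is equivalent to having a maximal disjoint set of reducing curves disjoint from the diagram, which is exactly Definition~\ref{def:simple}.

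Assuming the contrapositive, there is a reducing curve $\gamma \subseteq \Sigma_g$ that must intersect the diagram for $f$, forcing a disk $D$ of positive area to be attached to $\Sigma^*$ when forming $\mathscr{C}$. The goal is to exhibit a Stallings map $f' \in (\text{Out}(F_g) \times \text{Out}(F_g)) \cdot f$ whose $\mathscr{C}'$ has strictly smaller area. Since $\gamma$ is reducing, it is null-homotopic in both $\mathcal{H}_1$ and $\mathcal{H}_2$, so its cyclic intersection word with the blue (resp.\ red) diagram curves reduces to the identity in $F_g$. The wave-move argument from the proof of Proposition~\ref{propirred} then furnishes a finite sequence of wave moves --- which, by Theorem~\ref{thm:wavemove}, are compositions of handle slides realized by the $\text{Out}(F_g) \times \text{Out}(F_g)$ action --- making the blue diagram disjoint from $\gamma$; applying the same argument to the red half completes the construction of $f'$. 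Now $\gamma$ lies in a non-disk component of the complement of the new diagram, so that component is pinched in $(\Sigma^*)'$ by Theorem~\ref{thm:pinch}; choosing the maximal reducing set for $f'$ so as to include $\gamma$, no disk need be attached for $\gamma$ in $\mathscr{C}'$.

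The main obstacle is making the area accounting work out to a strict decrease. The wave moves that strip $\gamma$ off the diagram may introduce additional blue--red intersection points and hence extra squares into $(\Sigma^*)'$, and one must show that this potential increase is strictly dominated by the elimination of the disk $D$. A natural approach is to run the tautness-restoring procedure of Theorem~\ref{thm:existtaut} in tandem with the wave moves, and to estimate the area of $D$ in terms of the lift $\hat\gamma$: the squares spanned by $D$ in $T_1 \times T_2$ are precisely the squares crossed by $\hat\gamma$, so the area of $D$ is at least the geometric intersection number of $\gamma$ with the diagram, which is exactly what the wave moves remove from $(\Sigma^*)'$. Carrying out this combinatorial bookkeeping --- and iterating the procedure on any remaining reducing curves that still meet the diagram --- strictly decreases the total area of $\mathscr{C}$, so any minimiser must satisfy $\mathscr{C} = \Sigma^*$, i.e.\ the associated Heegaard diagram is simple.
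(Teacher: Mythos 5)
Your strategy matches the paper's at the structural level: argue the contrapositive, use wave moves to peel the diagram off a reducing curve $\gamma$ that still meets it, and then show the area of $\mathscr{C}$ strictly drops. You also correctly flag the central difficulty — that wave moves introduce new blue--red intersections — but the resolution you propose does not hold up, and this is precisely where all the real work in the paper's proof sits.

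Your claim ``the area of $D$ is at least the geometric intersection number of $\gamma$ with the diagram'' is false. If $\hat\gamma$ has length $k_b + k_r$ (that many intersection points with blue and red curves), then the area of the disk $D$ it bounds in $T_1 \times T_2$ is the number of \emph{linked cancelling pairs}, which can be as small as zero and in general is not bounded below by $k_b + k_r$; for a $1\times1$ square disk, $|\hat\gamma| = 4$ but the area is $1$. Likewise ``which is exactly what the wave moves remove from $(\Sigma^*)'$'' conflates two different quantities: the geometric intersection number $i(\gamma,\text{diagram})$ with the number of intersections lying \emph{on the deleted arcs} of diagram curves; these are not the same.

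The paper's accounting is different in both halves and you would need something equivalent. First, the new squares created by the wave moves are put in bijection with a subset of the squares of $D$ (each new intersection corresponds to a linked pair of cancelling intersection points of $\gamma$, hence to a square of $D$); this is what bounds the \emph{added} squares by $\operatorname{area}(D)$. Second — and this is the step wholly absent from your sketch — one must prove that the arcs of diagram curves \emph{deleted} by the wave moves carried at least one intersection. The paper shows this by assuming otherwise, tubing the deleted arcs, and constructing a simple closed curve $\eta$ disjoint from the original diagram which either crosses $\gamma$ transversally (contradicting that pinched curves and the $\gamma_i$'s are disjoint / that the surface subspaces are filling) or would force $\gamma$ to bound a disk on one side of $\Sigma^*$. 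Without an argument of this type you only get $\operatorname{area}(\mathscr{C}') \le \operatorname{area}(\mathscr{C})$, not the strict inequality you need, so the minimiser is not pinned down.
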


We will see in the proof below that a Stallings map with the smallest-area augmented Heegaard diagram must also be taut. To make a Stallings map taut is equivalent to simplifying a Heegaard diagram with wave moves along curves that do not intersect the Heegaard diagram. While it had been conjectured by Whitehead and Volodin-Kuznetso-Fomenko that this may be enough to simplify a Heegaard diagram (of $S^3$), it has been shown that this is not enough \cite{morikawa} \cite{vkfcounterexample} \cite{ochiai} \cite{viro}. Theorem~\ref{thm:augheeg} quantifiably describes how to  continue to simplify all the way.

Overall, we see that asking for a Stallings map to be taut ties it to the Guirardel core (Theorem~\ref{thm:coreandhatf}). In order for this Stallings map to be simple, the Guirardel core must have a sense of minimal complexity, which is made precise by minimizing the area over all possible augmented Heegaard diagrams (Theorem~\ref{thm:augheeg}).
\begin{proof}
Assume that $S^2 \times S^1$ is not a connect summand of $M$. Let $[f] \in B_{M, \mathcal{H}_1, \mathcal{H}_2}$. Suppose that $\mathscr{C} \neq \Sigma^*$. We claim that by applying a sequence of handle slides, the area of $\mathscr{C}$ can be reduced. Specifically, we reduce the area by applying wave moves to pinch curves that previously had a disk attached along them. The idea is that this eliminates the need to attach any disks to $\Sigma^*$ to obtain $\mathscr{C}$.

Consider $\gamma$, a simple closed curve
in $\Sigma^*$, along which a disk is attached in $\mathscr{C}$.
Let $\hat \gamma$ be a lift of $\gamma$ to $S$. Then $\hat f (\hat \gamma)$ bounds a disk in $T_1 \times T_2$.

Consider the intersection word of $\gamma$ with the Heegaard diagram corresponding to $[f]$. Let $b_1, \ldots b_n$ be the blue letters that appear and let $r_1, \ldots r_m$ be the red letters that appear in this intersection word. (In other words,  $b_1, \ldots b_n$ are the blue curves $\gamma$ intersects and $r_1, \ldots r_m$ are the red curves $\gamma$ intersects). The cyclic intersection word of $\gamma$ can be reduced to the trivial word using the relation that $r$'s and $b$'s commute. This implies that each intersection noted by $b_i$ is paired with another noted $b_i^{-1}$ with which it will cancel in the reduction process (respectively for $r$'s). Each cancelling pair bounds a hyperplane in the disk bounded by $\hat f (\hat \gamma)$. Further, two hyperplanes in the disk intersect if and only if their end points on $\hat{f}(\hat \gamma)$ are linked.


\begin{figure}[h]
\centering
\begin{tikzpicture}
\node (image) at (0,0) {\includegraphics[width=4in]{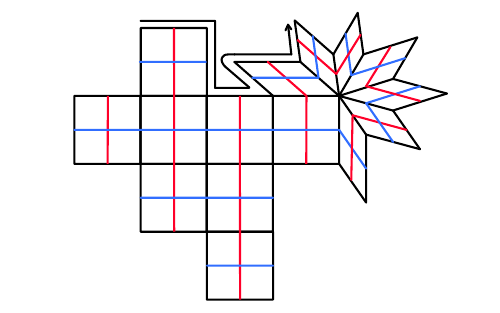}};
\node at (-1.8,3.1) {$\hat f (\hat \gamma)$};
\end{tikzpicture}
\caption{Disk in $T_1 \times T_2$ with $\hat f (\hat \gamma)$ as boundary.}
\label{tileddisk}
\end{figure}

Since $\hat f (\hat \gamma)$ is null homotopic in $T_1 \times T_2$,  there exists an $i$ such that a subsegment of $\gamma$ provides a path from $b_i$ to itself intersecting the same side twice without intersecting any other blue curves (corresponding to the place in the intersection word where cyclic reduction can begin). In other words, a wave move can be applied to $b_i$ along a subsegment of $\gamma$. Note that this segment of $\gamma$ may intersect red curves, and so after applying the wave move, new squares may emerge on $\Sigma^*$ corresponding to the intersection of $b_i$ and these red curves, see Figure~\ref{fig:reducingarea}. 

\begin{figure}[h]
\centering
\begin{tikzpicture}
\node (image) at (0,0) {\includegraphics[width=6.5in]{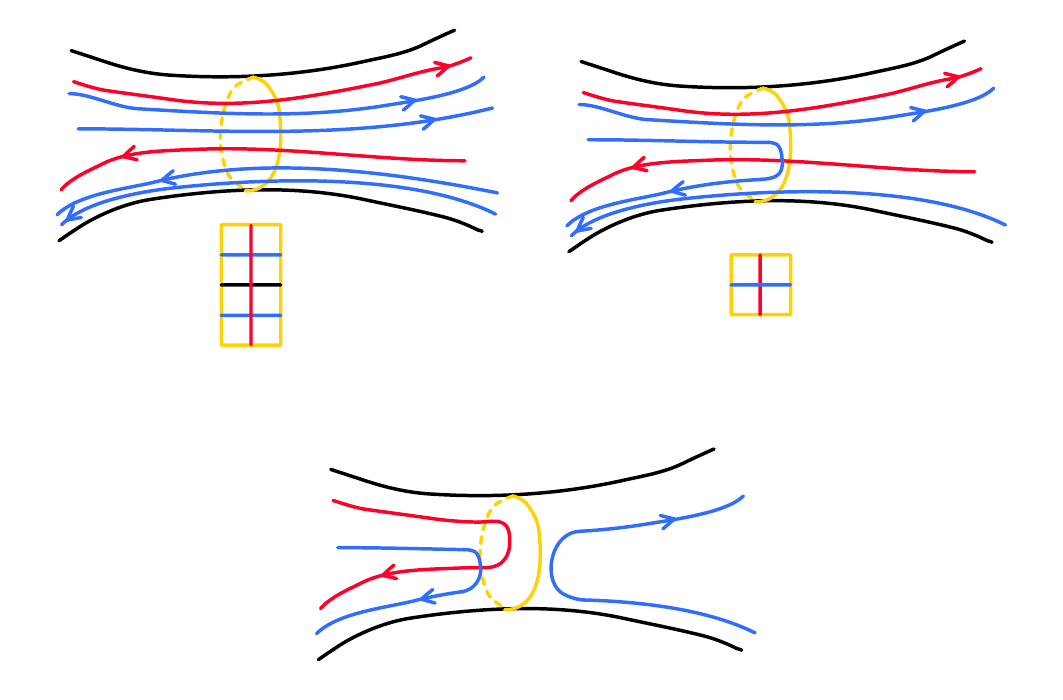}};
\node at (-4.3,4.6) {$ \gamma$};
\node at (3.7,4.5) {$ \gamma$};
\node at (7.2,4.6) {$ r_1$};
\node at (7.5,4) {$ b_2$};
\node at (0.7,2.7) {$ r_1$};
\node at (0.7,3.3) {$ b_1$};
\node at (7.7,1.8) {$ b_2$};
\node at (-0.8,4.7) {$ r_1$};
\node at (-0.4,4.3) {$ b_2$};
\node at (-7.1,2.9) {$ r_1$};
\node at (-7.3,3.5) {$ b_1$};
\node at (-0.2,2.5) {$ b_1$};
\node at (-0.3,2) {$ b_2$};
\node at (-0.3,-1.8) {$ \gamma$};
\node at (-3.3,-2.3) {$ r_1$};
\node at (-3.3,-3) {$ b_1$};
\node at (2.7,-2.2) {$ b_2$};
\node at (-4.3, -0.1) {$ r_1$};
\node at (-3.7, 1.5) {$ b_2$};
\node at (-3.7, 0.5) {$ b_1$};
\node at (3.6, 0.4) {$ r_1$};
\node at (2.9, 1) {$ b_2$};
\node at (-4.3,5.5) {Portion of original Heegaard diagram.};
\node[text width = 2cm] at (-6,1) {Disk tiled by squares attached along $\gamma$ in $\mathscr{C}$.};
\node[text width = 5cm] at (3.7,5.5) {Heegaard diagram after wave move applied to $b_1$ along segment of $\gamma$.};
\node[text width = 2cm] at (5.5,1) {Disk attached along $\gamma$ in $\mathscr{C}$ now has fewer squares.};
\node[text width=5cm] at (0,-0.6) {Several wave moves applied so that Heegaard diagram no longer intersects $\gamma$.};
\node[text width = 4cm] at (6,-3) {Here, $\gamma$ is pinched in $\mathscr{C}$ and does not bound a disk of positive area.};
\end{tikzpicture}
\caption{Before and after several wave moves have been applied. Heegaard diagram curves have been oriented arbitrarily so that intersection with $\gamma$ can be seen more easily. Note that in this case, one new intersection is created in the Heegaard diagram, between $r_1$ an $b_1$.}
\label{fig:reducingarea}
\end{figure}

Continue in this way, applying wave moves until $\gamma$ does not intersect any Heegaard diagram curves. At this point $\gamma$ is pinched in $\Sigma^*$. Further, any new squares added to $\Sigma^*$ that remain after this process correspond to pairs of Heegaard diagram curves whose intersection points with $\gamma$ are linked. Thus the new squares are in bijection with a subset of the squares in the disk $\gamma$ bounded in $\mathscr{C}$ before the modifications, see Figure~\ref{fig:reducingarea}. (While this is not always the case, there are instances where strictly fewer new squares are created. One such example is seen in Figure~\ref{fig:reducingarea})

We claim that the segments of the Heegaard diagram curves that were deleted in these wave moves must have had some intersections with other diagram curves.  Thus, the total number of squares in $\mathscr{C}$ has been reduced by this move. This is proven as follows:

Begin by assuming the deleted arcs of Heegaard diagram curves had no intersections. Orient $\gamma$ and choose a base point away from any intersections with diagram curves. Take closed tubular neighbourhoods of the arcs of $b_1, \ldots b_n$ and $r_1, \ldots, r_m$ that are deleted in the process above. Call them $\tau_1 \ldots \tau_n$ and $\nu_1, \ldots \nu_m$ respectively. Choose these neighbourhoods so that they do not intersect the base point of $\gamma$, and so that the boundaries of $\tau_1 \ldots \tau_n$ and $\nu_1, \ldots \nu_m$  that run alongside $b_1, \ldots b_n$ and $r_1, \ldots, r_m$ do not intersect any other curves in the original Heegaard diagram. This is possible because of our assumption that the arcs of $b_1, \ldots b_n$ and $r_1, \ldots, r_m$ that are deleted do not have any intersections.

Consider a new curve $\eta$ given by beginning at the base point of $\gamma$ and following $\gamma$, until the first boundary of $\tau_1 \ldots \tau_n$ or $\nu_1, \ldots \nu_m$ is intersected. Then follow that boundary until it intersects $\gamma$ again. At this point follow $\gamma$ again until another boundary of $\tau_1 \ldots \tau_n$ and $\nu_1, \ldots \nu_m$ is intersected. Then follow this boundary until it returns to $\gamma$. Continue in this way, see Figure~\ref{fig:options}.

The process ends if $\eta$ returns to the base point of $\gamma$. There are only finitely many boundary arcs of $\tau_1 \ldots \tau_n$ and $\nu_1, \ldots \nu_m$, and segments of $\gamma$ between these. If the process does not end, $\eta$ must eventually intersect itself somewhere other than the base point. This is not possible because, for each arc in $\eta$, there is a unique arc that could possibly precede it. Therefore, the process ends and $\eta$ is a simple closed curve.

Further, we have assumed that the boundaries of $\tau_1 \ldots \tau_n$ and $\nu_1, \ldots \nu_m$ do not intersect any Heegaard diagram curves, and so $\eta$ does not intersect any of the curves of the original Heegaard diagram. Therefore, before the modifications, $\eta$ is a curve that must be pinched in $\Sigma^*$, or bound a disk in $\Sigma_g$.

Depending on the choice of base point on $\gamma$, there are many choices for $\eta$. As long as the deleted arcs of $b_1, \ldots b_n$ and $r_1, \ldots, r_m$ are not all on the same side of $\gamma$, it is possible to choose the base point so that $\eta$ intersects $\gamma$ transversally, see Figure~\ref{fig:options}. If $\eta$ is pinched, this is a contradiction since the pinched curves and the curves along which disks are glued in to form $\mathscr{C}$ are disjoint. If $\eta$ bounds a disk in $\Sigma_g$, this disk must not contain any Heegard diagram curves. However, by construction there are Heegaard diagram curves on either side of $\eta$, a contradiction.

\begin{figure}[h]
\centering
\includegraphics[width=4.5in]{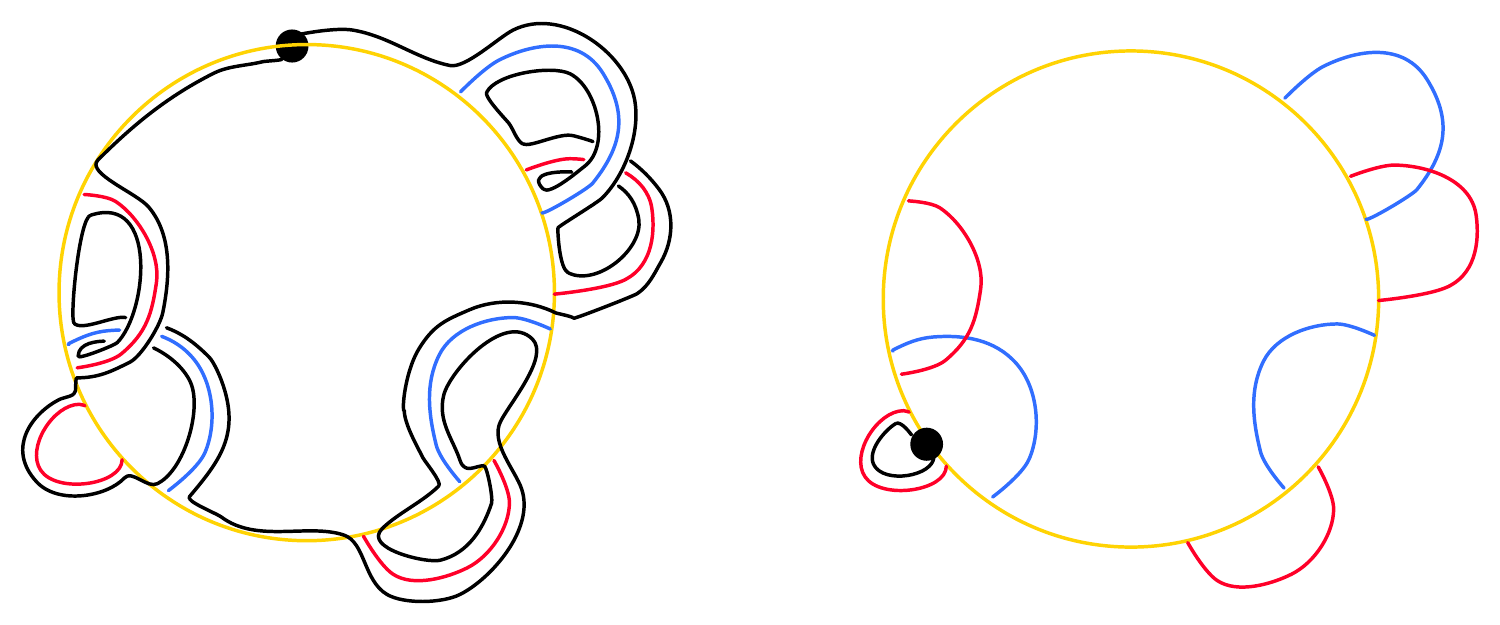}
\caption{By choosing the base point between two deleted arcs of Heegaard diagram curves that are on opposite sides of $\gamma$, $\eta$ intersects $\gamma$ transversally (figure on the left). Otherwise, this may not be the case (figure on the right).}
\label{fig:options}
\end{figure}

In the case where the deleted arcs of $b_1, \ldots b_n$ and $r_1, \ldots, r_m$ are all on the same side of $\gamma$, the assumption that they have no intersections imply that $\gamma$ bounds a disk on this side in $\Sigma^*$ (before attaching disks to form $\mathscr{C}$!). This is because, by definition of $\Sigma^*$, the Heegaard diagrams on each surface subspace of $\Sigma^*$ are filling. This is a contradiction because $\gamma$ must be an essential curve in $\Sigma^*$.

In either case, the deleted arcs of $b_1, \ldots b_n$ and $r_1, \ldots, r_m$ must have had some intersections. This translates to squares being deleted in $\Sigma^*$ when the wave moves are applied. Looking to $\mathscr{C}$, we see that some squares may be added, but these are in bijection with a subset of the squares that are deleted. The above now guarantees that this is a proper subset, and so, applying these wave moves reduces the area of $\mathscr{C}$.
\end{proof}


\begin{cor}
Given a $3$-manifold $M$ that does not have $S^1 \times S^2$ as a connect summand, the minimum-area augmented Heegaard diagram over all possible augmented Heegaard diagrams describing $M$ corresponds to a Heegaard splitting that is not stabilized.
\end{cor}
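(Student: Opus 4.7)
The plan is to argue by contradiction: I assume the overall minimum-area augmented Heegaard diagram for $M$ corresponds to a stabilized splitting, and then exhibit a destabilized splitting whose augmented Heegaard diagram has strictly smaller area.

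Let $\mathscr{C}$ be a minimum-area augmented Heegaard diagram for $M$, and suppose, for contradiction, that it corresponds to a stabilized Heegaard splitting $M = \mathcal{H}_1 \cup \mathcal{H}_2$ of genus $g$. By Theorem~\ref{thm:augheeg}, within the $\text{Out}(F_g) \times \text{Out}(F_g)$-orbit of the underlying Stallings map the minimum-area $\mathscr{C}$ is simple, so $\mathscr{C} = \Sigma^*$. Since the splitting is stabilized, Definition~\ref{def:simple} guarantees that $\Sigma^*$ contains at least one component realizing a minimum-genus Heegaard splitting of $S^3$, attached to the rest at a pinch point. The minimum-genus Heegaard splitting of $S^3$ is the genus-$1$ splitting, whose Heegaard diagram consists of a single red curve meeting a single blue curve in exactly one point. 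Under the dual square-complex structure from Section~\ref{mainthm}, this torus component $T$ therefore contributes exactly one square to the area of $\mathscr{C}$.

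I would then destabilize once to obtain a Heegaard splitting of $M$ of genus $g-1$, and take the pinched surface $\Sigma^* \smallsetminus T$ with its inherited square-complex structure as an augmented Heegaard diagram for the destabilized splitting. The remaining surface subspaces of $\Sigma^* \smallsetminus T$ still furnish minimum-genus irreducible Heegaard splittings of the prime connect summands of $M$, the pinch structure is unchanged away from $T$, and the locally injective Stallings map of $\Sigma^*$ restricts to a locally injective Stallings map on the smaller pinched surface. Hence $\Sigma^* \smallsetminus T$ is simple in the sense of Definition~\ref{def:simple}, and its area equals $\mathrm{area}(\mathscr{C}) - 1$, contradicting the assumed minimality of $\mathscr{C}$.

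The main step that needs careful justification is that the deletion of $T$ from $\Sigma^*$ really does implement an honest destabilization of $M = \mathcal{H}_1 \cup \mathcal{H}_2$ --- that is, that $T$ sits inside $\mathscr{C}$ as an $S^3$-summand that may be separated from the rest by a reducing sphere bounding a ball on the torus side. This should follow from the simplicity of $\mathscr{C}$ together with the fact that $T$ realizes the standard genus-$1$ splitting of $S^3$: the pinch point at which $T$ meets the rest of $\Sigma^*$ is the collapsed image of a separating reducing curve on $\Sigma_g$, and by Proposition~\ref{propirred} this curve bounds disks in both handlebodies that, together with the $T$-side, assemble into a stabilization that can be undone. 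Once this identification of ``remove an $S^3$-component of $\Sigma^*$'' with ``destabilize the splitting'' is in hand, the counting argument above closes the proof.
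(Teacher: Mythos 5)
Your proof is correct and follows essentially the same argument as the paper: under the hypothesis of minimal area, $\mathscr{C}=\Sigma^*$ and simplicity force a genus-one torus subspace $T$ of $\Sigma^*$ encoding $S^3$, and removing it yields an augmented Heegaard diagram of the destabilized splitting with strictly smaller area. The one cosmetic discrepancy is that the paper \emph{collapses} $T$ to a point rather than deleting it, which is the cleaner operation in the case that $T$ is attached to the rest of $\Sigma^*$ at more than one pinch point (since $T$ need not be a leaf of the tree associated to the pinched surface).
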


\begin{proof}
Given a Heegaard splitting of $M$, the minimum-area augmented Heegaard diagram $\mathscr{C}$ for this splitting is equal to $\Sigma^*$ and corresponds to a simple Heegaard diagram.  If the Heegaard splitting was stabilized, one of the surface subspaces $X$ of $\Sigma^*$ must be a surface of genus one that forms an augmented Heegaard diagram for $S^3$. Consider collapsing this subsurface to a point to make a new pinched surface. This new pinched surface must be an augmented Heegaard diagram for a Heegaard splitting of $M$ as well, since it has the same set of connect summands that are not $S^3$. However, the area of the new augmented Heegaard diagram must be strictly less than the original augmented Heegaard diagram. This implies that the original augmented Heegaard diagram was not of minimum area across all possible augmented Heegaard diagrams describing $M$, a contradiction. Thus the original Heegaard splitting must have been stabilized.

\end{proof}

\section{Future directions}
\label{directions}
While the previous section gives a partial answer to Question~\ref{pinchall}, it would be great if one could find a more explicit description of augmented Heegaard diagrams, as complexity of the Guirardel core, as opposed to using the Stallings map. This would allow results and methods for Guirardel cores to be applied to Heegaard splittings more readily.

This paper connects several different structures: Heegaard diagrams, Stallings maps, virtually special square complexes, Guirardel cores of a surface group acting on a pair of trees. One natural question is how the properties of the $3$-manifold all these structures describe is manifested.\\


\begin{q}
What methods and theorems from the study of Guirardel cores and virtually special square complexes be used to prove statements about $3$-manifolds, using the connection provided by Theorems~\ref{thm:pinch}, \ref{thm:coreandhatf} and \ref{thm:augheeg}
\end{q}


\section{Example: Lens spaces}
\label{lensspaces}
\subsection{Genus 1}
\label{g=1}
Consider genus 1 Heegaard splittings. These are lens spaces. They are all prime, and $S^2 \times S^1$ is the only lens space that is reducible. This is because any essential simple closed curve on the two-dimensional torus does not separate the surface. Therefore the existence of a curve that bounds a disk in both handle bodies implies the existence of a two-dimensional sphere embedded in the lens space that does not separate. It is well-known that an embedded, non-separating $2$-sphere implies that $S^2 \times S^1$ is a connect summand of the manifold. This is a contradiction, as we have excluded $S^2 \times S^1$ in our hypothesis and lens spaces are prime. Therefore, there cannot be a reducing curve and the Heegaard splitting is irreducible. Note that this means that the genus $1$ Heegaard splitting of $S^3$ is considered irreducible. We restrict attention to irreducible lens spaces in what follows (we do not consider $S^2 \times S^1$).\\

A lens space can be described by two integer parameters $p$ and $q$, and is denoted $L(p,q)$. Genus one Heegaard splittings for Lens spaces are pairs of simple essential curves on a two-dimensional torus. For simplicity, we define the $p$ and the $q$ in terms of Heegaard diagrams. This is not how it is usually done, but this view ties in best with this note. It is left as an exercise for the reader to show that these definitions are equivalent to the usual ones.

The first integer, $p$ is equal to the number of intersections of a Heegaard diagram for $L(p,q)$ (notice there are no handle slides for the single genus case). The second integer, $q$ is coprime to $p$, and is found by starting at a point on a diagram curve and numbering the intersection points as one travels along the curve (pick a direction). Then starting at a point on the other curve, read off the numbers while travelling along it (pick a direction). The consecutive numbers on the list obtained will differ by a constant, modulo $p$. This constant is $q$, and $L(p, q)=L(p, -q (\text{mod}p))=L(p,q^{-1} (\text{mod}p))$. Recall, the fundamental group of the lens space $L(p, q)$ is $\mathbb{Z}/\mathbb{Z}_p$.\\

Consider the domain and the target for Stallings maps that encode genus one Heegaard splittings. The splitting surface is the two-dimensional torus, and the target space is the product of two bouquets of a single circle. This is also the two-dimensional torus. Now Theorem~\ref{locinj} implies that there is a locally injective representative map that encodes the Heegaard splitting. Locally injective endomorphisms of $T^2$ are covering maps. And so, the irreducible lens spaces can be encoded by finite sheeted covering maps of the torus.\\ 

\begin{prop}
The degree of the covering map is $p$.
\end{prop}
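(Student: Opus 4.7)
The plan is to compute the degree of the covering map $f \colon T^2 \to T^2$ by counting the preimage of a single well-chosen regular point. Since $f$ is a covering map, the cardinality of $f^{-1}(x)$ is constant on $T^2$ and equals the degree; so I only need one convenient $x$.

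I will take $x = (s_1, s_2)$, where $s_i$ is the distinguished point on the $i$-th circle used in the proof of Theorem~\ref{heegaarddiagram} (namely, the midpoint picked on each copy of $S^1$ to extract a Heegaard diagram from the Stallings map). Writing $f = f_1 \times f_2$, I have
\[
f^{-1}(s_1, s_2) = f_1^{-1}(s_1) \cap f_2^{-1}(s_2).
\]
By the construction of the Stallings map corresponding to a Heegaard diagram (see the surjectivity argument in Theorem~\ref{heegaarddiagram}, together with the taut-filling representative produced in Lemma~\ref{tautfilling}), in the genus-one case $f_i^{-1}(s_i)$ is exactly one of the two diagram curves on $T^2$. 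Thus the preimage of $(s_1,s_2)$ is precisely the set of transverse intersection points of the two Heegaard diagram curves.

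By the definition of the parameter $p$ given just above the proposition, this intersection number is $p$. Hence $|f^{-1}(s_1,s_2)| = p$, and since $f$ is a finite-sheeted covering of $T^2$, its degree is $p$.

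The main thing to verify carefully is that the chosen representative really has the clean preimage description $f_i^{-1}(s_i) = $ (one diagram curve), rather than a messier nearby fibre; but this is exactly the content of the taut, filling, locally injective representative produced in Section~\ref{mainthm}, so no new work is required. The rest of the argument is just ``degree equals cardinality of a regular fibre.''
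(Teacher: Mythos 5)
Your argument is essentially identical to the paper's: both choose the point $(s_1,s_2)$ whose fibre is the intersection of the two coordinate preimages, identify those preimages with the two Heegaard diagram curves, and count the $p$ intersection points. The only cosmetic difference is that you write the fibre explicitly as $f_1^{-1}(s_1)\cap f_2^{-1}(s_2)$ while the paper phrases it as the preimages of the intersection point $y$ in the target torus; the content is the same.
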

\begin{proof}
Begin with a covering map $f:T^2 \longrightarrow S^1 \times S^1$ that describes $L(p,q)$. Recall, that one can obtain a Heegaard diagram for the encoded splitting by taking the curves that are the coordinate pre-images of a point on each circle coordinate of $S^1 \times S^1$. In Figure~\ref{torustarget}, we see two points whose pre-image should be considered to obtain a diagram. The pre-image of the blue points should be coloured blue and the pre-image of the red points should be coloured red.\\

\begin{figure}
\centering \includegraphics[width=4in]{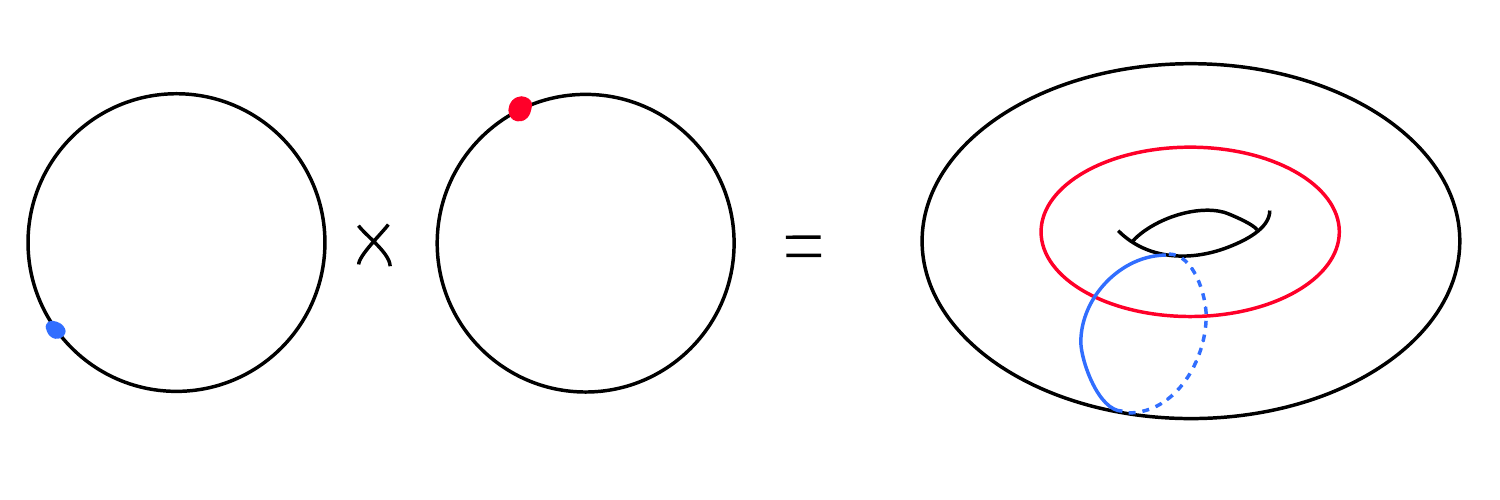}
\caption{$S^1 \times S^1$ shown with a blue point and a red point selected on each coordinate circle.}
\label{torustarget}
\end{figure}

Note that the fiber above the blue point and the fiber above the red point intersect once in the target space (shown in Figure~\ref{torustarget}). Call the intersection point $y$. Now, in the domain $T^2$, the diagram we obtained has $p$ intersection points. Therefore there are $p$ pre-images of $y$. This shows that the degree of $f$ is $p$.
\end{proof}

In the example in Figure~\ref{coveringmap}, using the notation of the proof above, $f$ is the map between tori and it describes $L(5,2)$.

\begin{figure}
\centering \includegraphics[width=\textwidth]{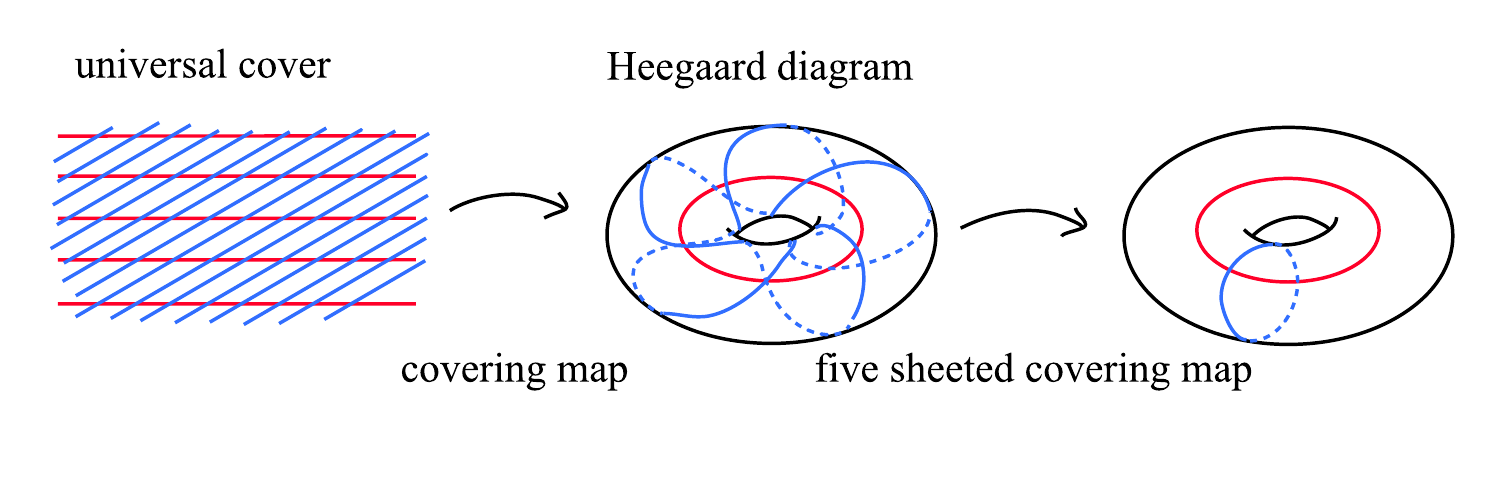}
\caption{Pre-images of the red and blue points.}
\label{coveringmap}
\end{figure}

The other integer, $q$, can be seen as follows. Recall that the squares can be combinatorially drawn as the dual to the Heegaard diagram. This shows how the $p$ squares are attached together to give a torus.\\ \\

\begin{rmk}
The square complex structure of the domain $T^2$ is given by attaching $p$ squares to make a cylinder of circumference $p$ and height one, and then attaching the remaining boundary components with a $\frac{q}{p}$ twist. There is a distinct square complex structure for each lens space. These can each be thought of as a point in the moduli space of conformal structures on $T^2$.
\end{rmk}

A $\frac{q}{p}$ twist is needed because otherwise, there would not be a single Heegaard diagram curve of each colour after gluing. The countable set of points in the moduli space that encode Heegaard diagrams of Lens spaces is unbounded since $L(p,1)$ always has a simple closed curve of length $\sqrt{2}$ and a transversal curve of length $p$ (taking the squares to have unit side lengths). So as $p \rightarrow \infty$, the first curve becomes conformally short.

These Stallings maps are simple, since there are no reducing curves and $\Sigma = \Sigma^* = \mathscr{C}$. The space $T_1 \times T_2$ is homeomorphic to $\mathbb{R}^2$, and the Guirardel core is the entire plane. 

\subsection{Genus 2}
\label{osborne}
\begin{figure}[h]
\centering 
\begin{tikzpicture}
\node (image) at (0,0) {\includegraphics[width=\textwidth]{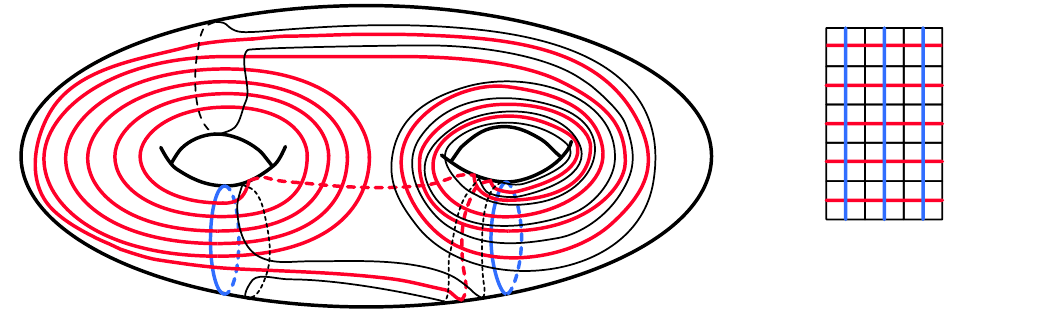}};
\node at (-2.8,1) {$a$};
\node at (-6.2, 2.1) {$b$};
\draw (-5.4,1.7) to [out=90, in =0] (-6, 2.1);
\node at (-5.1, -1.9) {$x$};
\node at (0.45, -2.3) {$y$};
\draw (-0.5,-1.95) to [out=0, in =120] (0.3, -2.2);
\node at (-2.4, -1.4) {$\eta$};
\node at (4.4, -0.7) {$a$};
\node at (4.4, -0.1) {$a$};
\node at (4.4, 0.5) {$a$};
\node at (4.4, 1.1) {$a$};
\node at (4.4, 1.8) {$a$};
\node at (4.9, 2.2) {$y$};
\node at (5.5, 2.2) {$y$};
\node at (6.1, 2.2) {$y$};
\end{tikzpicture}
\caption{A taut and filling Heegaard diagram for a Lens space with fundamental group $\mathbb{Z}_{13}$. The curves $a$ and $b$ are red, while the curves $x$ and $y$ are blue. The curve $\eta$ is reducing, and bounds the disk shown at right in the augmented Heegaard diagram $\mathscr{C}$. }
\label{area18}
\end{figure}

Any genus 2 Heegaard splitting of a Lens space is reducible. However, some Stallings maps encoding genus two Heegaard splittings of Lens spaces are taut. Further in these examples, the square complex $\Sigma^*$  has smaller total area than their genus 1 counterparts. These examples were found by Osborne, and they give a counterexample to a converse to Theorem~\ref{locinj}. Figure~\ref{Osborne} shows one such Heegaard diagram of a Lens space with fundamental group $\mathbb{Z}_{13}$. A Heegaard decomposition of genus one for this manifold would have an augmented Heegaard diagram of area 13 (see Section~\ref{g=1}). The genus two diagram shown in Figure~\ref{Osborne} corresponds to a Stallings map $h$ such that $\Sigma_g= \Sigma^*$ has area 11.  However, $\mathscr{C}$ must also contain disks of squares attached along  a maximal disjoint set of reducing curves. A reducing curve which bounds a disk of area 15 is shown in Figure~\ref{area18}. This brings the area of $\mathscr{C}$ well above its genus one counterpart.

\bibliography{Heegaard_splittings_and_square_complexes} 
\bibliographystyle{plain}

\end{document}